\newcommand{\cmark}{\ding{51}}
\newcommand{\xmark}{\ding{55}}
\theoremstyle{plain}
\newtheorem{theorem}{Theorem}[section]
\newtheorem{lemma}[theorem]{Lemma}
\newtheorem{proposition}[theorem]{Proposition}
\newtheorem{corollary}[theorem]{Corollary}
\newcommand{\thistheoremname}{}
\newtheorem*{genericthm*}{\thistheoremname}
\newenvironment{namedthm*}[1]
{\renewcommand{\thistheoremname}{#1}%
\begin{genericthm*}}
{\end{genericthm*}}
\theoremstyle{definition}
\newtheorem{definition}[theorem]{Definition}
\newtheorem{problem}[theorem]{Open problem}
\DeclareMathOperator{\birth}{\tilde{b}}
\DeclareMathOperator{\outcome}{o}
\DeclareMathOperator{\outcomeL}{o^L}
\DeclareMathOperator{\outcomeR}{o^R}
\DeclareMathOperator{\maug}{\mathcal{M}_{aug}}
\newcommand{\tomb}{
    \begin{tikzpicture}[scale=0.11]
        \filldraw (0,0) rectangle (1,2);
    \end{tikzpicture}
    \@ifnextchar,{\hspace{0.1em}}{}%
}
\begin{document}
\title{Invertibility in the mis\`ere multiverse}
\author{Alfie
    Davies\,\orcidlink{0000-0002-4215-7343}\\\small{\emph{\href{mailto:research@alfied.xyz}{research@alfied.xyz}}}
\and Vishal
Yadav\,\orcidlink{0009-0005-4583-9429}\\\small{\emph{\href{mailto:vkyadav@mun.ca}{vkyadav@mun.ca}}}}
\date{
    \small{
        Department of Mathematics and Statistics,\\
        Memorial University of Newfoundland,\\
        Canada
    }
}
\maketitle

\begin{abstract}
    Understanding invertibility in restricted mis\`ere play has been
    challenging; in particular, the possibility of non-conjugate inverses posed
    difficulties. Advances have been made in a few specific universes, but a
    general theorem was elusive. We prove that every universe has the conjugate
    property, and also give a characterisation of the invertible elements of
    each universe. We then explore when a universe can have non-trivial
    invertible elements, leaving a slew of open problems to be further
    investigated.
\end{abstract}

\section{Introduction}

The current direction of mis\`ere research is moving fiercely towards studying
restrictions of the full mis\`ere structure $\mathcal{M}$. Larsson, Nowakowski,
and Santos, in their work on absolute combinatorial game theory, introduced the
notion of an (absolute) \emph{universe}\footnote{
    We will follow Siegel \cite[Footnote 1 on p.~2]{siegel:on} in dropping the
    adjective `absolute' from `absolute universe', and note that a universe (in
    the original, weaker sense) is absolute if and only if it is parental (see
    \cite[Definition 23, Theorem 24, and Corollary 27 on
    pp.~21--23]{larsson.nowakowski.ea:absolute}).
} \cite[Definitions 12 and 13 on pp.~17--18]{larsson.nowakowski.ea:absolute},
which is a set of games that satisfies additive, conjugate, hereditary, and
dicotic closure. These closure properties, aside from the dicotic closure, are
naturally occurring in many rulesets that are typically studied, and so these
universes are not entirely arbitrary sets of games to study.

A terrific tool was built in \cite[Theorem 4 on
p.~6]{larsson.nowakowski.ea:absolute} that yields a comparison test when
working modulo a universe, which is analogous to the test one enjoys when
working in normal play---this cements the concept of the universe as warranting
further study. It was then shown in \cite[Theorem 26 on
p.~13]{larsson.nowakowski.ea:infinitely}, again by Larsson, Nowakowski, and
Santos, that there are infinitely many (mis\`ere) universes. In fact, Siegel
strengthened this and showed that there are uncountably many universes lying
between the dicot universe $\mathcal{D}$ and the dead-ending universe
$\mathcal{E}$ \cite[pp.~10--11]{siegel:on}.\footnote{
    There is some subtlety here, however, about when two universes should be
    called `distinct', and this is not yet a well-trodden idea in the
    literature: should they be distinct if they are different sets of games, or
    should they be distinct if they have different equivalence classes, et
    cetera. See \cite[Definition 12 and Observation 13 on
    p.~7]{larsson.nowakowski.ea:infinitely}.
}

It is immediate that every universe is a monoid, and hence it is natural to ask
about invertibility. Indeed, Milley writes in \cite[p.~13]{milley:partizan}
that ``[a] better understanding of mis\`ere invertibility is a significant open
problem in the growing theory of restricted mis\`ere play.'' In the near-decade
since \cite{milley:partizan} was published, there have been numerous
improvements. For instance, it was shown by Larsson, Milley, Nowakowski,
Renault, and Santos in \cite[Theorems 36 and 37 on
pp.~20--22]{larsson.milley.ea:progress} that both $\mathcal{D}$ and
$\mathcal{E}$ have the conjugate property \cite[Definition 35 on
p.~20]{larsson.milley.ea:progress}; that is, if $G\in\mathcal{D}$ and there
exists some $H\in\mathcal{D}$ such that $G+H\equiv_\mathcal{D}0$, then
$H\equiv_\mathcal{D}\overline{G}$, and similarly for $\mathcal{E}$.
Furthermore, characterisations were then found for the $\mathcal{D}$-invertible
elements of $\mathcal{D}$ by Fisher, Nowakowski, and Santos \cite[Theorem 12 on
p.~7]{fisher.nowakowski.ea:invertible}, and also for the
$\mathcal{E}$-invertible elements of $\mathcal{E}$ by Milley and Renault
\cite[Theorems 19 and 22 on pp.~11--12]{milley.renault:invertible}.

In their survey on partizan mis\`ere theory, Milley and Renault posed two open
problems \cite[\S7 on p.~122]{milley.renault:restricted} The first concerned
the possibility of non-conjugate inverses. They asked ``[in] what universes
does $G+H\equiv_\mathcal{U}0$ imply $\overline{G}\equiv_\mathcal{U}H$?'' In
particular, they wonder whether the implication can be proved for parental,
dense universes (they are using ``universe'' in the weaker sense). This is
equivalent to asking if it is true for every (absolute) universe (going back to
our terminology). Such questions have been asked before. Indeed, Milley
conjectured in \cite[Conjecture 2.1.7 on p.~10]{milley:restricted} that being
closed under conjugation would suffice to yield the conjugate property. This
turned out to be false, as discussed again by Milley in
\cite[p.~13]{milley:partizan} where they then asked ``is there some condition
on the universe $\mathcal{U}$ so that $G$ being invertible implies
$G+\overline{G}\equiv_\mathcal{U}0$?'' The answer is yes, and this is what we
now prove a decade or so later!

Notably, our results here were not formed from a steady trickle of small
improvements, but rather were hammered into existence by a recent discovery:
Siegel's simplest forms \cite[\S5]{siegel:on}. These forms allow us to do away
with the complications that can arise from reversing through ends, and indeed
they effectively solve the second open problem posed in \cite[\S7 on
p.~122]{milley.renault:restricted} as it pertains to (absolute) universes. We
will give the necessary background for this theory, as well as other important
ideas, in \cref{sec:prelims}.

In \cref{sec:invertibility}, we make the strides towards the better
understanding of invertibility that we have just discussed: we show that every
universe has the conjugate property (\cref{thm:conjugate-property}). We also
give a characterisation of the invertible elements of each universe
(\cref{thm:invertibility-characterisation}), which we then contrast with the
known characterisations in $\mathcal{D}$ and $\mathcal{E}$.

In \cref{sec:reduced}, we explore when a universe has no (non-trivial)
invertible elements; we call such a universe \emph{reduced}\footnote{
    A monoid whose invertible subgroup is trivial is often called reduced, and
    we use that same language for our universes here (since every universe is a
    monoid).
}. We give a characterisation to test for when a universe is reduced, although
the result is far from practical.

In \cref{sec:weak}, we introduce the notion of a \emph{weak} universe (and also
a weak set of games), which we will see is equivalent to meaning one that
induces the same partial order relation as full mis\`ere. (The actual
definition requires more material than we have in this introduction; see
\cref{def:weak}.) When a universe is weak, it can have no (non-zero) invertible
elements; it is reduced. But it is unclear whether the reverse implication also
holds. We discover elements, such as $\{\cdot\mid2\}$, whose mere presence in a
universe is enough to render it weak. Speaking practically, if a universe
contains $\{\cdot\mid2\}$, then it contains no (non-zero) invertible elements.

Finally, in \cref{sec:final}, we briefly discuss the landscape and set some
future directions.

\section{Preliminaries}
\label{sec:prelims}

The main object of our study here is the universe, and so we had better define
it! We will not, however, recall all of the basic ideas of the field of
Combinatorial Game Theory, and the reader is directed to Siegel's wonderful
book for such information \cite{siegel:combinatorial}.

A set of games $\mathcal{U}$ is called a \emph{universe} if the following
closure properties are satisfied for all $G,H\in\mathcal{U}$:
\begin{itemize}
    \item
        (additive closure) $G+H\in\mathcal{U}$;
    \item
        (conjugate closure) $\overline{G}\in\mathcal{U}$;
    \item
        (hereditary closure) $G'\in\mathcal{U}$ for all options $G'$ of $G$;
    \item
        (dicotic closure) if $\mathscr{G},\mathscr{H}\subseteq\mathcal{U}$ are
        finite and non-empty, then
        \[
            \{\mathscr{G}\mid\mathscr{H}\}\in\mathcal{U}.
        \]
\end{itemize}
Given a universe $\mathcal{U}$, Siegel defines a game $G$ to be Left
$\mathcal{U}$-strong if $\outcome(G+X)\geq\mathscr{N}$ for all Left ends
$X\in\mathcal{U}$ \cite[Definition 2.3 on p.~5]{siegel:on}.

Now, as we alluded to in the introduction, our results make heavy use of
Siegel's simplest forms. These are new ideas in mis\`ere theory, and the reader
is most strongly encouraged to read Siegel's original presentation of the
material \cite[\S5]{siegel:on} that includes excellent motivations and
additional information to what we will provide here; we include only enough to
get by.

Say we have a universe $\mathcal{U}$, and some game $G=\{G^L\mid G^R\}$. A Left
option $G^L$ of $G$ is $\mathcal{U}$-\emph{reversible} if there exists some
$G^{LR}$ such that $G\geq_\mathcal{U}G^{LR}$. In normal play, when such an
option is reversible, the reduction that occurs is replacing the $G^L$ with all
of the Left options of the $G^{LR}$. When $G^{LR}$ is not a Left end, we may do
the same thing in (restricted) mis\`ere. The problem occurs precisely when
$G^L$ is a Left end. This is because, if $G$ has a $\mathcal{U}$-reversible
option $G^L$ that is a Left end, then $G$ is Left $\mathcal{U}$-strong, and
removing that $G^L$ (i.e.\ replacing it with nothing) may not preserve enough
information to keep the form Left $\mathcal{U}$-strong. But, as Siegel shows,
we can get rid of almost all of the extra information encoded in $G^L$.

The key idea is to remove the end-reversible option and replace it with an
abstract symbol ``\tomb'' called a \emph{tombstone}.\footnote{Siegel actually
    writes $\Sigma^L$ and $\Sigma^R$ for a Left and Right tombstone
    respectively, but this has no consequence for the theory, and we have
    chosen to simplify the notation here and emphasise that this is an abstract
    symbol acting as a flag, rather than possibly some game $\Sigma$ with Left
    or Right options.
}
This tombstone just acts as a marker, or a flag, to the fact that the game is
Left / Right $\mathcal{U}$-strong, and this is what the reader should keep in
their head. Siegel calls the tombstones (if they are present) the
\emph{tombstone options} of a game, and the remaining options are the
\emph{ordinary options}. But, as a reminder, the tombstones are \emph{not}
games, and they cannot be moved to. When we write $G^L$ (or $G^R$), it will
never refer to tombstone options, but instead only to ordinary options.

We write $\maug$ for the set of all game forms where each subposition may be
adorned with a Left or Right tombstone, or both; these are Siegel's
\emph{augmented forms} (cf.\ \cite[Definition 5.1 on p.~21]{siegel:on}). Siegel
then illustrates a beautiful theory, extending the definitions of addition and
outcomes and other useful things. We will extend Siegel's definition of being
Left $\mathcal{U}$-strong to work for arbitrary sets of games, rather than just
universes, since we will have use for it later.

\begin{definition}[cf.\ {\cite[p.~22 and Definition 2.3 on p.~5]{siegel:on}}]
    If $\mathcal{A}$ is a set of games, then we say $G\in\maug$ is \emph{Left
    $\mathcal{A}$-strong} if $\outcomeL(G+X)=\mathscr{L}$ for all Left ends
    $X\in\mathcal{A}$. (We define \emph{Right $\mathcal{A}$-strong}
    analogously.)
\end{definition}

Most importantly, Siegel extends the terrific comparison test of \cite[Theorem
4 on p.~6]{larsson.nowakowski.ea:absolute} to work for all pairs of augmented
forms. If an augmented form is a Left end (i.e.\ there exists no $G^L$) or
contains a Left tombstone, then Siegel calls it \emph{Left end-like} (and
similarly for \emph{Right end-like}).

\begin{theorem}[{\cite[Theorem 5.5 on p.~22]{siegel:on}}]
    \label{thm:comparison}
    If $\mathcal{U}$ is a universe and $G,H\in\maug$, then $G\geq_\mathcal{U}H$
    if and only if:
    \begin{enumerate}[itemsep=8pt, label=(\alph*)]
        \item
            for every $G^R$, either there exists some $H^R$ with
            $G^R\geq_\mathcal{U} H^R$, or else there exists some $G^{RL}$ with
            $G^{RL}\geq_\mathcal{U}H$;

        \item
            for every $H^L$, either there exists some $G^L$ with
            $G^L\geq_\mathcal{U}H^L$, or else there exists some $H^{LR}$ with
            $G\geq_\mathcal{U}H^{LR}$;

        \item
            if $H$ is Left end-like, then $G$ is Left $\mathcal{U}$-strong;

        \item
            if $G$ is Right end-like, then $H$ is Right $\mathcal{U}$-strong.
    \end{enumerate}
\end{theorem}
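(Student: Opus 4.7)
The plan is to prove both directions by induction on the formal birthday $\birth(G)+\birth(H)$, adapting the strategy that Larsson, Nowakowski, and Santos used for their universal comparison test \cite[Theorem 4]{larsson.nowakowski.ea:absolute}, but carefully extending the argument to handle tombstones and end-like augmented forms.

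For necessity, assume $G\geq_\mathcal{U}H$. Conditions (a) and (b) come from analysing specific moves in a well-chosen sum. To extract (a), for each ordinary Right option $G^R$ I would arrange a witness $X\in\mathcal{U}$ for which Right's threat $G+X\longrightarrow G^R+X$ forces the dichotomy: Left must either respond with a dominating move inside $G^R$ (which by induction gives $G^{RL}\geq_\mathcal{U}H$) or else find a matching $H^R$; condition (b) is symmetric. For (c), suppose $H$ is Left end-like and let $X\in\mathcal{U}$ be any Left end. Then $H+X$ is Left end-like, and in the augmented sense $\outcomeL(H+X)=\mathscr{L}$ — this is precisely the role of the tombstone flag. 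Because $G\geq_\mathcal{U}H$, we get $\outcomeL(G+X)=\mathscr{L}$, which is exactly $G$ being Left $\mathcal{U}$-strong. Condition (d) is symmetric.

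For sufficiency, assume (a)--(d) and fix any $X\in\mathcal{U}$; the goal is $\outcome(G+X)\geq\outcome(H+X)$. I would convert a winning Left strategy in $H+X$ into one in $G+X$ (and symmetrically for Right), splitting on where the mover plays. A move inside $X$ is mirrored; a move $G\longrightarrow G^R$ triggers (a), offering either an induced comparison $G^{RL}\geq_\mathcal{U}H$ or a matching $G^R\geq_\mathcal{U}H^R$; a move $H\longrightarrow H^L$ triggers (b) symmetrically. The delicate positions are those where play stalls on an end-like component: there, conditions (c) and (d) supply the Left-strong (respectively Right-strong) guarantees that keep the outcome from collapsing the wrong way.

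The main obstacle will be the careful treatment of end-like forms and tombstones, which is the feature distinguishing this test from its predecessor. In particular, one must handle ``tombstone option'' and ``ordinary end'' uniformly under (c) and (d), and ensure that induction still applies when a reversing option $G^{LR}$ or $H^{LR}$ is itself end-like. This forces the whole argument to live in Siegel's extended framework on $\maug$ — with augmented outcomes and the extended sum — so that strength propagates correctly across components and the recursion bottoms out at the right places. One also has to verify, when invoking (a) or (b) inductively, that the reversing witness is again in $\maug$ and that no circularity is introduced in the definition of strength, which is itself phrased in terms of sums with Left ends from $\mathcal{U}$.
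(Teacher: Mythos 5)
First, a point of orientation: the paper does not prove this theorem at all --- it is quoted verbatim from Siegel \cite{siegel:on}, and the only pieces the paper reproves are the two ``easy'' fragments, namely \cref{prop:comp-implies} (the proviso is necessary, for an arbitrary set of games) and \cref{prop:maint+strong-implies} (maintenance plus proviso is sufficient, for a hereditary set of games). Your sufficiency sketch and your argument for (c)--(d) match those two propositions in substance. One technical correction there: the induction for sufficiency cannot be on $\birth(G)+\birth(H)$ alone, because the case in which the winning move is made inside the distinguishing position $X$ (your ``mirrored'' case) reduces $X$ and not $G$ or $H$; the induction must run on the formal birthday of the whole sum $G+H+X$, with hereditary closure of $\mathcal{U}$ guaranteeing $X^L\in\mathcal{U}$.

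The genuine gap is in the necessity of the maintenance property, which is the only part of the theorem that actually requires $\mathcal{U}$ to be a universe rather than merely a hereditary, conjugate-closed set. You write that for each $G^R$ you would ``arrange a witness $X\in\mathcal{U}$'' forcing the dichotomy, but producing that witness is the entire content of this direction. The standard argument (in Siegel's proof, following the absolute-theory template) assumes the maintenance condition fails for some fixed $G^R$ --- so that for \emph{every} $H^R$ one has $G^R\not\geq_\mathcal{U}H^R$ and for \emph{every} $G^{RL}$ one has $G^{RL}\not\geq_\mathcal{U}H$ --- collects, by induction, a distinguishing position for each of these finitely many failures, and then assembles them as the Left options (together with suitable Right options) of a single new game $T$. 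The dicotic (parental) closure of $\mathcal{U}$ is exactly what puts $T$ back inside $\mathcal{U}$, and one then checks that Right, moving $G+T\to G^R+T$, wins, while Left wins $H+T$, contradicting $G\geq_\mathcal{U}H$. Additional care is needed because the assembled $T$ must be a genuine game form (tombstones must not be smuggled into it) and because the end-like cases of $G^R$ and $H$ interact with the proviso. None of this construction appears in your proposal, and without it the necessity direction is not established --- indeed, it is false for general hereditary sets of games, which is why the paper is careful to state \cref{prop:maint+strong-implies} as a one-way implication only.
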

In \cref{thm:comparison}, conditions (a)--(b) are commonly referred to as the
\emph{maintenance property}, and (c)--(d) as the \emph{proviso}.

If we weaken the requirement of \cref{thm:comparison} that $\mathcal{U}$ be a
universe, then we obtain the following simple results. These results are not
significant developments; we are including them here because writing them down
in an organised way will be useful to us later!

\begin{proposition}
    \label{prop:comp-implies}
    If $\mathcal{A}$ is a set of games, and $G,H\in\maug$ with
    $G\geq_\mathcal{A}H$, then:
    \begin{itemize}
        \item
            if $H$ is Left end-like, then $G$ is Left $\mathcal{A}$-strong; and
        \item
            if $G$ is a Right end-like, then $H$ is Right $\mathcal{A}$-strong.
    \end{itemize}
\end{proposition}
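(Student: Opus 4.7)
The plan is to unwind the definition of $\geq_\mathcal{A}$ and observe that a Left end-like augmented form, summed with a Left end, is itself Left end-like and therefore (by Siegel's extension of the outcome function to $\maug$) has Left-outcome $\mathscr{L}$.

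Concretely, I would fix an arbitrary Left end $X\in\mathcal{A}$ and look at $H+X$. Both summands are Left end-like: $H$ by hypothesis, and $X$ because it is a genuine Left end. The disjunctive sum of two Left end-like augmented forms has no ordinary Left option (any such option would have to arise from one of the summands, and neither has one), so $H+X$ is itself Left end-like. Under the mis\`ere convention this forces $\outcomeL(H+X)=\mathscr{L}$, since Left moving first has no ordinary move and therefore wins. From $G\geq_\mathcal{A}H$ we now read off $\outcomeL(G+X)\geq\outcomeL(H+X)=\mathscr{L}$, hence $\outcomeL(G+X)=\mathscr{L}$. As $X$ was an arbitrary Left end of $\mathcal{A}$, this gives that $G$ is Left $\mathcal{A}$-strong. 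The second bullet follows by the mirror argument, swapping Left and Right throughout; we cannot simply appeal to conjugation here since $\mathcal{A}$ is not assumed to be conjugate-closed, but the same four lines go through verbatim with $\outcomeR$ in place of $\outcomeL$.

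I do not expect any real obstacle: the whole argument lives at the level of the definitions, and all the outcome semantics we need are already packaged in Siegel's development of $\maug$ recalled above. The only point worth flagging explicitly is the one that makes the proposition genuinely useful later: nothing in the argument uses additivity, conjugate closure, heredity, or dicotic closure of $\mathcal{A}$. This is exactly why the statement makes sense for an arbitrary set of games, which is the generality the authors will need when they pass from universes to more general collections in the subsequent sections.
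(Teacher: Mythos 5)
Your argument is the same as the paper's: unwind the definition of $\geq_\mathcal{A}$, observe that $H+X$ is Left end-like for every Left end $X\in\mathcal{A}$ and hence has $\outcomeL(H+X)=\mathscr{L}$, and transfer this to $G+X$ via the outcome inequality; the second bullet is the mirror image. One small imprecision is worth fixing: \emph{Left end-like} means ``is a Left end \emph{or} carries a Left tombstone,'' so a Left end-like $H$ (e.g.\ $\{\tomb,0\mid0\}$) may well have ordinary Left options, and your parenthetical claim that $H+X$ has no ordinary Left option is then false. The correct statement is simply that $H+X$ is Left end-like --- either it is a genuine Left end, or it inherits a Left tombstone from $H$ --- and Siegel's outcome semantics for augmented forms give $\outcomeL=\mathscr{L}$ for any Left end-like form, whether via the mis\`ere convention or via the tombstone. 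With that one-line repair the proof goes through exactly as in the paper, and your closing remark that no closure properties of $\mathcal{A}$ are used is precisely why the proposition is stated for arbitrary sets of games.
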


\begin{proof}
    Since $G\geq_\mathcal{A}H$, we must have $\outcome(G+X)\geq\outcome(H+X)$
    for all $X\in\mathcal{A}$ by definition. If $H$ is Left end-like, then
    $\outcome(H+X)\geq\mathscr{N}$, and hence $G$ must be Left
    $\mathcal{A}$-strong. The rest follows by symmetry.
\end{proof}

In the following proposition, we can follow (almost verbatim) one of the
directions of Siegel's proof \cite[Proof of Theorem 5.5 on
pp.~24--25]{siegel:on}.

\begin{proposition}
    \label{prop:maint+strong-implies}
    If $\mathcal{A}$ is a hereditary set of games, and $G,H\in\maug$ satisfy
    each of the following properties, then $G\geq_\mathcal{A}H$:
    \begin{itemize}
        \item
            for all $G^R$, either there exists some $H^R$ with
            $G^R\geq_\mathcal{A}H^R$, or else there exists some $G^{RL}$ with
            $G^{RL}\geq_\mathcal{A}H$;
        \item
            for all $H^L$, either there exists some $G^L$ with
            $G^L\geq_\mathcal{A}H^L$, or else there exists some $H^{LR}$ with
            $G\geq_\mathcal{A}H^{LR}$;
        \item
            if $G$ is Right end-like, then $H$ is Right $\mathcal{A}$-strong;
            and
        \item
            if $H$ is Left end-like, then $G$ is Left $\mathcal{A}$-strong.
    \end{itemize}
\end{proposition}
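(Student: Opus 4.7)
The plan is to mirror, almost verbatim, the sufficiency half of Siegel's proof of \cref{thm:comparison}, as the authors have just flagged. Fixing $X\in\mathcal{A}$, I would prove the two inequalities $\outcomeL(G+X)\geq\outcomeL(H+X)$ and $\outcomeR(G+X)\geq\outcomeR(H+X)$ in parallel by induction on the rank of $X$. Hereditary closure guarantees $X^L,X^R\in\mathcal{A}$, so the inductive hypothesis is legitimately available whenever play descends into $X$. Crucially, we do \emph{not} need to induct on $G$ or $H$: the relations $G^R\geq_\mathcal{A}H^R$, $G^{RL}\geq_\mathcal{A}H$, $G^L\geq_\mathcal{A}H^L$, and $G\geq_\mathcal{A}H^{LR}$ are handed to us as pre-established inequalities, and we simply instantiate each at the relevant position.

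For the Left-outcome inequality, suppose $\outcomeL(H+X)=\mathscr{L}$. If Left's winning first move is in $X$ to some $X^L$, I recurse via the inductive hypothesis at $X^L$. If instead Left's winning move is in $H$ to some $H^L$, I apply the second bullet: either a matching $G^L$ with $G^L\geq_\mathcal{A}H^L$ exists, giving Left the analogous winning move in $G+X$ after instantiating at $X$; or a reversing option $H^{LR}$ with $G\geq_\mathcal{A}H^{LR}$ exists, in which case Left simply inherits Left's winning second-player strategy from $H^{LR}+X$. In the degenerate case where Left ``wins on arrival'' without moving, $H$ must be Left end-like, and the fourth bullet supplies that $G$ is Left $\mathcal{A}$-strong, forcing $\outcomeL(G+X)=\mathscr{L}$ directly from the definition.

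The Right-outcome inequality is symmetric. I would let Right make her first move from $G+X$: a move in $X$ to $X^R$ is handled by the inductive hypothesis at $X^R$; a move in $G$ to $G^R$ is handled by the first bullet, which either produces $H^R$ with $G^R\geq_\mathcal{A}H^R$ (so Left's winning response to Right's move $H\to H^R$ in the hypothesis game transfers to $G^R+X$ by instantiation) or produces a reversing option $G^{RL}\geq_\mathcal{A}H$ (so Left reverses $G^R\to G^{RL}$ and inherits Left's winning second-player strategy on $H+X$). If $G$ has no ordinary Right option at all, then $G$ is Right end-like, and the third bullet makes $H$ Right $\mathcal{A}$-strong, closing the case.

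The only real obstacle---and it is bookkeeping rather than conceptual---is making sure the augmented-form outcome semantics (with tombstones) interact correctly with the end-like provisos in the degenerate cases, so that the strength of $G$ or $H$ is invoked exactly when needed and tombstones are never inadvertently treated as legal moves. Since Siegel's proof of \cref{thm:comparison} already navigates precisely these subtleties, transcribing that bookkeeping essentially completes the argument; the only substantive change is replacing the word ``universe'' with ``hereditary set of games'', which is permissible because none of the other closure properties are used in the sufficiency direction.
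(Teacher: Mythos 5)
Your proposal is correct and follows essentially the same route as the paper's proof (which itself transcribes the sufficiency direction of Siegel's argument): a case analysis on Left's winning move in $H+X$ handled by the maintenance bullets, the end-like case handled by the proviso, and recursion only when play descends into $X$, which is where heredity is used. Your observation that the induction need only track $X$ (the paper inducts on the formal birthday of $G+H+X$, but only the $X$-component ever decreases) is a harmless sharpening, not a different argument.
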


\begin{proof}
    We need to show that $\outcome(G+X)\geq\outcome(H+X)$ for all
    $X\in\mathcal{A}$. It suffices to show that
    $\outcomeL(G+X)\geq\outcomeL(H+X)$ for all $X\in\mathcal{A}$, since an
    identical argument will prove $\outcomeR(G+X)\geq\outcomeR(H+X)$. We
    proceed by induction on the formal birthday of $G+H+X$.

    We need only consider when $\outcomeL(H+X)=\mathscr{L}$. We must be in one
    of three cases:
    \begin{enumerate}
        \item
            $\outcomeR(H+X^L)=\mathscr{L}$ for some $X^L$.

            By induction, since $\mathcal{A}$ is hereditary, we have that
            $\outcomeR(G+X^L)=\mathscr{L}$, and hence
            $\outcomeL(G+X)=\mathscr{L}$.
        \item
            $\outcomeR(H^L+X)=\mathscr{L}$ for some $H^L$.

            By hypothesis, either there exists some $G^L$ with
            $G^L\geq_\mathcal{A}H^L$, or else there exists some $H^{LR}$ with
            $G\geq_\mathcal{A}H^{LR}$. In the first case, we have immediately
            that $\outcomeR(G^L+X)\geq\outcomeR(H^L+X)$, and hence
            $\outcomeL(G+X)=\mathscr{L}$. In the latter, we
            observe that $\outcomeL(H^{LR}+X)=\mathscr{L}$ (since
            $\outcomeR(H^L+X)=\mathscr{L}$), and so
            $\outcomeL(G+X)\geq\outcome(H^{LR}+X)=\mathscr{L}$.
        \item
            $H+X$ is Left end-like.

            Since $X\in\mathcal{A}$, it must follow that $X$ is a Left end and
            $H$ is Left end-like. By hypothesis, $G$ is Left
            $\mathcal{A}$-strong, and hence $\outcomeL(G+X)=\mathscr{L}$.
    \end{enumerate}
\end{proof}

The other major piece of the theory we will require is the notion of the
$\mathcal{U}$-simplest form (see \cite[Definition 5.19 on p.~27]{siegel:on}).
When we have a universe $\mathcal{U}$, and some augmented form $G$, we can
repeatedly perform the following reductions on all subpositions of $G$:
\begin{enumerate}
    \item
        bypass $\mathcal{U}$-reversible options, replacing end-reversible ones
        with a tombstone;
    \item
        remove $\mathcal{U}$-dominated options; and
    \item
        remove unnecessary tombstones.
\end{enumerate}
Siegel proves that the form we arrive at (i.e.\ when we can no longer perform
any more reductions), which is called the \emph{$\mathcal{U}$-simplest form},
is unique. That is, if $G\equiv_\mathcal{U}H$, then the $\mathcal{U}$-simplest
form of $G$ is isomorphic to the $\mathcal{U}$-simplest form of $H$.

So, when the reader sees a game in $\mathcal{U}$-simplest form in our
arguments, they should have in their mind that there are no
$\mathcal{U}$-dominated options, no $\mathcal{U}$-reversible options, and no
unnecessary tombstones in any of its subpositions.

For each universe $\mathcal{U}$, Siegel then defines a special set of augmented
forms called $\hat{\mathcal{U}}$; simply put, this set contains every augmented
form whose tombstones can be replaced with end-reversible options in
$\mathcal{U}$ such that we can recover a game form in $\mathcal{U}$ (Siegel
says that such a game has a \emph{$\mathcal{U}$-expansion}). So, each game in
$\hat{\mathcal{U}}$ is equal (modulo $\mathcal{U}$) to some game in
$\mathcal{U}$. We will not need the formal definitions and theory here.
Instead, we need just two results.

\begin{lemma}[{\cite[Lemma 5.21 on p.~29]{siegel:on}}]
    If $\mathcal{U}$ is a universe and $G\in\mathcal{U}$, then the
    $\mathcal{U}$-simplest form of $G$ is in $\hat{\mathcal{U}}$.
\end{lemma}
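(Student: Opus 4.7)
The plan is to induct on the formal birthday of $G$ and, at each stage, to build an explicit $\mathcal{U}$-expansion of the simplest form $G'$ by recording which ordinary option was bypassed each time a tombstone was introduced. The base case (when $G$ has no options) is immediate because $G' = G \in \mathcal{U} \subseteq \hat{\mathcal{U}}$.

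For the inductive step I would exploit the uniqueness of the simplest form to perform the reductions in a convenient order: first reduce each proper subposition, then apply the top-level reductions. By the induction hypothesis, every ordinary option of $G'$ is the simplest form of an option of $G$ (which is in $\mathcal{U}$ by hereditary closure) and therefore already lies in $\hat{\mathcal{U}}$, equipped with a chosen $\mathcal{U}$-expansion. It then suffices to check that each of the three top-level reductions---dominated-option removal, reversible bypass, and tombstone removal---preserves the property of having a $\mathcal{U}$-expansion.

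Domination deletion and the removal of unnecessary tombstones are essentially bookkeeping: one combines the expansions of the surviving ordinary options and uses additive, hereditary, and dicotic closure of $\mathcal{U}$ to reassemble a game in $\mathcal{U}$. Non-end reversible bypass introduces no tombstones either---$G^L$ is replaced by the ordinary Left options of $G^{LR}$, all of which live in $\mathcal{U}$ by heredity---so an expansion is built analogously. The crucial case is end-reversible bypass: here the original option $G^L$ itself is in $\mathcal{U}$ (by hereditary closure) and is end-reversible by hypothesis ($G \geq_\mathcal{U} G^{LR}$ with $G^{LR}$ a Left end), so I would designate $G^L$ as the witness for the newly installed Left tombstone (and symmetrically on the Right).

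The main obstacle I anticipate is verifying that the simultaneous substitution of all designated witnesses into $G'$ yields an element of $\mathcal{U}$ (not merely an augmented form) whose equivalence class modulo $\mathcal{U}$ agrees with $G$. This requires, on one hand, a careful use of additive, hereditary, and dicotic closure to show the reassembled form actually lies in $\mathcal{U}$, and on the other hand showing that reinstating an end-reversible option does not alter the $\mathcal{U}$-equivalence class, which is essentially what the reversibility hypothesis together with \cref{thm:comparison} deliver (the proviso on Left/Right strength being preserved precisely because a tombstone was retained). Once these closure and invariance checks are in place, the induction produces a valid $\mathcal{U}$-expansion of $G'$, placing it in $\hat{\mathcal{U}}$.
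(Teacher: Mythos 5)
This lemma is not proved in the paper at all: it is imported verbatim from Siegel (the citation to Lemma 5.21 of \cite{siegel:on} is the entire content), and the authors explicitly say they will not reproduce the formal theory of $\hat{\mathcal{U}}$ and $\mathcal{U}$-expansions. So there is no in-paper argument to compare your proposal against; what follows is an assessment of the sketch on its own terms.

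Your overall strategy --- induct on formal birthday, show $G \in \mathcal{U} \subseteq \hat{\mathcal{U}}$ at the base, and verify that each of the three reductions preserves the property of having a $\mathcal{U}$-expansion --- is the natural one and is close in spirit to how Siegel develops the material. The one place where the sketch papers over the real difficulty is the designation of the witness in the end-reversible case. You write that ``the original option $G^L$ itself is in $\mathcal{U}$ (by hereditary closure)'' and can serve as the tombstone's witness, but the reductions are applied to all subpositions before (and interleaved with) the top-level ones, so at the moment a Left tombstone is installed the option being discarded is in general an \emph{augmented} form, not an element of $\mathcal{U}$; moreover the end-reversibility inequality $G \geq_\mathcal{U} G^{LR}$ was certified against the partially reduced $G$, not against the reassembled expansion you ultimately need. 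What is actually required is to carry, through every reduction step, a chosen $\mathcal{U}$-expansion of each subposition together with the fact that expansion preserves $\equiv_\mathcal{U}$, and then to reverify end-reversibility of the substituted witness against the full expansion using \cref{thm:comparison} and the invariance of $\geq_\mathcal{U}$ under $\mathcal{U}$-equivalence (which for augmented forms rests on \cref{thm:pomonoid}-type facts, themselves downstream of this circle of results in Siegel's development). You flag this as ``the main obstacle,'' which is the right instinct, but as written the proposal asserts rather than establishes it; that verification is the substance of the lemma, not a final bookkeeping check.
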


\begin{theorem}[{\cite[Theorem 5.23 on p.~30]{siegel:on}}]
    \label{thm:monoid}
    If $\mathcal{U}$ is a universe, and $G,H,J\in\hat{\mathcal{U}}$ satisfy
    $G\equiv_\mathcal{U}H$, then $G+J\equiv_\mathcal{U}H+J$.
\end{theorem}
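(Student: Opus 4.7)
The plan is to verify \cref{thm:comparison} for $G+J$ versus $H+J$ in both directions. By the symmetry of the situation, it suffices to show that $G\geq_\mathcal{U}H$ together with $J\in\hat{\mathcal{U}}$ implies $G+J\geq_\mathcal{U}H+J$, and apply this twice (using $G\equiv_\mathcal{U}H$ iff $G\geq_\mathcal{U}H$ and $H\geq_\mathcal{U}G$). The whole argument runs by induction on the formal birthday of $G+H+J$.

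For the maintenance conditions (a) and (b) of \cref{thm:comparison}, I would go option by option. A Right option of $G+J$ has the shape $G^R+J$ or $G+J^R$. In the first case, applying the maintenance part of $G\geq_\mathcal{U}H$ produces either some $H^R$ with $G^R\geq_\mathcal{U}H^R$ or some $G^{RL}$ with $G^{RL}\geq_\mathcal{U}H$; in either case the inductive hypothesis (with $J$ unchanged but smaller overall birthday) upgrades this to the corresponding statement about $G^R+J$ or $G^{RL}+J$ relative to $H+J$. In the second case, match $G+J^R$ with $H+J^R$ and apply the inductive hypothesis, using that the options of $J$ inherit membership in $\hat{\mathcal{U}}$. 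The Left half is symmetric.

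The main obstacle is the proviso, conditions (c) and (d). Suppose $H+J$ is Left end-like; then both $H$ and $J$ must be Left end-like. From $G\geq_\mathcal{U}H$ and \cref{thm:comparison}(c) for that relation we conclude that $G$ is Left $\mathcal{U}$-strong, and the task becomes showing that $G+J$ is also Left $\mathcal{U}$-strong: for every Left end $X\in\mathcal{U}$ we need $\outcomeL(G+J+X)=\mathscr{L}$. This is precisely where the hypothesis $J\in\hat{\mathcal{U}}$ is indispensable: $J+X$ is Left end-like but need not belong to $\mathcal{U}$, so Left's strength at $G$ cannot be invoked directly. I would therefore pass to a $\mathcal{U}$-expansion $\tilde{J}\in\mathcal{U}$ of $J$, in which every Left tombstone is realised by an end-reversible Left option, and use $\tilde{J}+X\in\mathcal{U}$ (a Left end-like element of $\mathcal{U}$, since $J$ is Left end-like) together with Left $\mathcal{U}$-strength of $G$ to witness $\outcomeL(G+\tilde{J}+X)=\mathscr{L}$. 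A winning Left response from $G+\tilde{J}+X$ then has to be transported back to a winning Left response from $G+J+X$; the inductive hypothesis, applied to the game reached after the end-reversible move is played and undone, is what makes this transport legal.

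The delicate bookkeeping — matching each tombstone in $J$ with the end-reversible option in $\tilde{J}$ that certifies it, and checking that invoking the inductive hypothesis at precisely those subpositions does not create a circularity — is where I expect the real work to lie. The maintenance clauses are essentially a direct translation of the maintenance clauses for $G\geq_\mathcal{U}H$, but the proviso requires that $\hat{\mathcal{U}}$ be stable enough under addition that strength properties in $\mathcal{U}$ can be pushed through the passage to expansions and back.
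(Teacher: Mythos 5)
First, a point of order: the paper does not prove this statement at all --- it is imported verbatim from Siegel as \cite[Theorem 5.23 on p.~30]{siegel:on} --- so there is no in-paper argument to measure your proposal against. Judged on its own terms, your reduction to a single inequality, the induction, and the treatment of the maintenance clauses (a)--(b) are all reasonable. The difficulty is concentrated exactly where you locate it, in the proviso, but the plan you sketch there does not go through as written. If $J$ carries a Left tombstone, then its $\mathcal{U}$-expansion $\tilde{J}$ has a genuine Left option in place of that tombstone, so $\tilde{J}$ --- and hence $\tilde{J}+X$ --- is \emph{not} a Left end; for honest game forms in $\mathcal{U}$, ``Left end-like'' simply means ``Left end''. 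Since Left $\mathcal{U}$-strength of $G$ quantifies only over Left ends belonging to $\mathcal{U}$, it says nothing about $\outcomeL(G+\tilde{J}+X)$, and the starting point of your transport argument is unjustified in precisely the case where there is something to transport.

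Second, even in the case where $\tilde{J}$ is a Left end (tombstones occurring only in deeper subpositions), carrying a winning Left reply from $G+\tilde{J}+X$ back to $G+J+X$ amounts to asserting that replacing $J$ by $\tilde{J}$ inside the additive context $G+{}\cdot{}+X$ preserves outcomes --- that is, that $J\equiv_\mathcal{U}\tilde{J}$ can be added to. This is an instance of the very congruence being proved, and the appeal to ``the inductive hypothesis'' does not rescue it: $\birth(\tilde{J})\geq\birth(J)$ and $X$ ranges over all Left ends of $\mathcal{U}$, so the relevant positions are not controlled by an induction on $\birth(G+H+J)$. What is needed is a standalone lemma of the shape: if $A$ is Left $\mathcal{U}$-strong and $Y\in\hat{\mathcal{U}}$ is Left end-like, then $\outcomeL(A+Y)=\mathscr{L}$, proved directly from the defining property of end-reversible options (Right's reversing reply lands on a Left end of $\mathcal{U}$ sitting below $Y$, which is where $A$'s strength legitimately re-enters). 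This is essentially the auxiliary result Siegel establishes en route to his Theorem 5.23; without it, or some substitute, your argument has a genuine hole at its central step.
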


So, Siegel shows that $(\hat{\mathcal{U}},\equiv_\mathcal{U})$ is a monoid (in
particular, note that $\hat{\mathcal{U}}$ is closed under addition). We give a
straightforward extension of this to show that
$(\hat{\mathcal{U}},\geq_\mathcal{U})$ is a pomonoid, which is more apt for our
purposes.

\begin{theorem}
    \label{thm:pomonoid}
    If $\mathcal{U}$ is a universe, then $(\hat{\mathcal{U}},\geq_\mathcal{U})$
    is a pomonoid.
\end{theorem}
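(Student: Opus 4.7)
The plan is to reduce the claim to Siegel's Theorem~\ref{thm:monoid} by exploiting $\mathcal{U}$-expansions. First, I would dispose of the routine parts: the relation $\geq_{\mathcal{U}}$ is reflexive and transitive on $\maug$ directly from its definition in terms of outcomes, so it descends to a partial order on the quotient by $\equiv_{\mathcal{U}}$. Together with Theorem~\ref{thm:monoid}, which says that $\hat{\mathcal{U}}$ is closed under addition and that addition is well-defined modulo $\equiv_{\mathcal{U}}$, the only non-trivial obligation is \emph{compatibility}: for all $G, H, J \in \hat{\mathcal{U}}$, if $G \geq_{\mathcal{U}} H$ then $G + J \geq_{\mathcal{U}} H + J$.

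To establish compatibility, I would move the problem into $\mathcal{U}$ itself via expansions. Given $G, H, J \in \hat{\mathcal{U}}$, each admits a $\mathcal{U}$-expansion, yielding honest games $G', H', J' \in \mathcal{U}$ with $G \equiv_{\mathcal{U}} G'$, $H \equiv_{\mathcal{U}} H'$, and $J \equiv_{\mathcal{U}} J'$. Transitivity of $\geq_{\mathcal{U}}$ then gives $G' \geq_{\mathcal{U}} H'$. Since $G', H', J' \in \mathcal{U}$ and $\mathcal{U}$ is closed under addition, the definition of $\geq_{\mathcal{U}}$ (as an inequality of outcomes over all distinguishing contexts in $\mathcal{U}$, which may be absorbed into $J' + X$) immediately yields $G' + J' \geq_{\mathcal{U}} H' + J'$.

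To transport this back to $G, H, J$, I would apply Theorem~\ref{thm:monoid} twice, using the fact that $\mathcal{U} \subseteq \hat{\mathcal{U}}$ (a game in $\mathcal{U}$ carries no tombstones, so it is its own expansion). From $G \equiv_{\mathcal{U}} G'$ in $\hat{\mathcal{U}}$, adding $J \in \hat{\mathcal{U}}$ gives $G + J \equiv_{\mathcal{U}} G' + J$; from $J \equiv_{\mathcal{U}} J'$ in $\hat{\mathcal{U}}$, adding $G' \in \hat{\mathcal{U}}$ gives $G' + J \equiv_{\mathcal{U}} G' + J'$. The symmetric chain gives $H + J \equiv_{\mathcal{U}} H' + J'$. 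Threading these equivalences through $G' + J' \geq_{\mathcal{U}} H' + J'$ yields $G + J \geq_{\mathcal{U}} H + J$, as desired.

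The only real obstacle is the compatibility step, but the expansion trick reduces it to compatibility inside $\mathcal{U}$, where it is immediate from additive closure. Since Siegel has already done the substantive work in Theorem~\ref{thm:monoid}, the proof reduces to a short diagram chase, and no appeal to the comparison theorem or to Proposition~\ref{prop:maint+strong-implies} is needed.
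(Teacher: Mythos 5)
Your proposal is correct and follows essentially the same route as the paper: pass to $\mathcal{U}$-expansions $G',H',J'\in\mathcal{U}$, use the fact that $(\mathcal{U},\geq_\mathcal{U})$ is already a pomonoid to get $G'+J'\geq_\mathcal{U}H'+J'$, and thread back through the chain of equivalences supplied by Theorem~\ref{thm:monoid}. The paper compresses this into a single displayed chain of (in)equalities, but the underlying argument is identical.
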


\begin{proof}
    We need to prove that, for all $G,H,J\in\hat{\mathcal{U}}$ with
    $G\geq_\mathcal{U}H$, it follows that $G+J\geq_\mathcal{U}H+J$.

    Since $G,H,J\in\hat{\mathcal{U}}$, we know that we can find
    $G',H',J'\in\mathcal{U}$ such that
    \begin{align*}
        G'&\equiv_\mathcal{U}G,\\
        H'&\equiv_\mathcal{U}H,\text{ and}\\
        J'&\equiv_\mathcal{U}J.
    \end{align*}
    Observe then that
    $G'\equiv_\mathcal{U}G\geq_\mathcal{U}H\equiv_\mathcal{U}H'$. Since
    $(\mathcal{U},\geq_\mathcal{U})$ is a pomonoid, it then follows from
    \cref{thm:monoid} that
    \[
        G+J\equiv_\mathcal{U}G'+J
        \equiv_\mathcal{U}G'+J'\geq_\mathcal{U}H'+J'\equiv_\mathcal{U}H'+J
        \equiv_\mathcal{U}H+J.
    \]
\end{proof}

\section{Invertibility}
\label{sec:invertibility}

Armed with the powerful tools referenced in our preliminaries, we now seek to
further understand how invertibility works within a universe. Of course, we
must define what we mean by \emph{invertibility}! We don't restrict ourselves
to universes, since we will have something to say later about more general
monoids.

\begin{definition}
    \label{def:invertible}
    If $\mathcal{A}$ is a set of games, $G\in\maug$, and there exists some
    $H\in\mathcal{A}$ such that $G+H\equiv_\mathcal{A}0$, then we say $G$ is
    \emph{$\mathcal{A}$-invertible}; we call $H$ an
    \emph{$\mathcal{A}$-inverse} of $G$.
\end{definition}

For a general set of games $\mathcal{A}$, there doesn't necessarily exist a
\emph{unique} $\mathcal{A}$-inverse for an $\mathcal{A}$-invertible element,
which is why we called $H$ \emph{an} $\mathcal{A}$-inverse of $G$ in
\cref{def:invertible}, as opposed to \emph{the} $\mathcal{A}$-inverse of $G$.
But when $\mathcal{A}$ is a monoid, then the $\mathcal{A}$-inverse of a game
\emph{must} be unique (up to equivalence modulo $\mathcal{A}$, of course).

It is conceivable that one would like to consider when $G+H\equiv_\mathcal{A}0$
for completely arbitrary forms; i.e.\ without restricting $H$ to be an element
of $\mathcal{A}$ like in \cref{def:invertible}. Doing so here would add
difficulties to our exposition, and our results do not currently apply to such
a general case. (But it \emph{is} simpler for us to allow $G$ to be outside of
$\mathcal{A}$, which is why we do so.) In the future, such things may be
further explored.

Readers familiar with normal play will recall that the inverse of a game is
always its conjugate. But, in mis\`ere, we have already discussed that this is
not so obvious a claim.\footnote{It is so non-obvious that it is \emph{false}
    in some cases! This was mentioned in
    \cite[p.~13]{milley.renault:restricted}, where the offending example comes
    from an element of period 6 in a monoid in \cite[A.6 on
    p.~617]{plambeck.siegel:misere}.
}
As such, we give a separate definition for this case. As we mentioned in the
introduction, the idea of conjugate invertibility is not new, but we state this
definition for clarity.

\begin{definition}
    \label{def:conj-invertible}
    If $\mathcal{A}$ is a set of games, and $G\in\maug$, then we say $G$ is
    \emph{conjugate $\mathcal{A}$-invertible} if $\overline{G}$ is an
    $\mathcal{A}$-inverse of $G$.
\end{definition}

One could rephrase this as: we say $G$ is conjugate $\mathcal{A}$-invertible if
$\overline{G}\in\mathcal{A}$ and $G+\overline{G}\equiv_\mathcal{A}0$.

We will write $\mathcal{A}^\times$ and $\mathcal{A}^{\overline{\times}}$ for
the set of $\mathcal{A}$-invertible and conjugate $\mathcal{A}$-invertible
elements of $\mathcal{A}$ respectively; note that we are \emph{not} considering
those forms outside of $\mathcal{A}$ here. Of course, every conjugate
$\mathcal{A}$-invertible game is $\mathcal{A}$-invertible, and so
$\mathcal{A}^{\overline{\times}}\subseteq\mathcal{A}^\times$. We can also make
a trivial remark about 0 being conjugate $\mathcal{A}$-invertible.

\begin{proposition}
    \label{prop:0-invertible}
    If $\mathcal{A}$ is a set of games containing 0, then $0$ is conjugate
    $\mathcal{A}$-invertible.
\end{proposition}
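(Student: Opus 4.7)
The statement is essentially a sanity check, so the plan is almost entirely bookkeeping against the definitions just given. First I would unpack what ``conjugate $\mathcal{A}$-invertible'' means: by \cref{def:conj-invertible} (and the rephrasing given immediately after it), I need to verify two things, namely that $\overline{0}\in\mathcal{A}$ and that $0+\overline{0}\equiv_\mathcal{A}0$.

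For the first requirement, I would simply observe that the conjugate of $0=\{\cdot\mid\cdot\}$ is itself, so $\overline{0}=0$, and the hypothesis $0\in\mathcal{A}$ then gives $\overline{0}\in\mathcal{A}$ immediately. For the second, since $0$ is the additive identity (even at the level of game forms), we have $0+\overline{0}=0+0=0$, which is trivially $\equiv_\mathcal{A}0$. Assembling these two observations via \cref{def:conj-invertible} finishes the proof.

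There is no real obstacle here; the only thing worth being slightly careful about is not accidentally invoking any structural property of $\mathcal{A}$ beyond $0\in\mathcal{A}$, since the proposition is deliberately stated for an arbitrary set of games (no closure, no monoid structure assumed). In particular, I would avoid appealing to \cref{thm:monoid} or \cref{thm:pomonoid} and instead keep the verification at the level of raw form-equality.
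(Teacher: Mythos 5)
Your proof is correct and matches the paper's own argument exactly: both observe that $\overline{0}\cong 0\in\mathcal{A}$ and that $0+\overline{0}\cong 0\equiv_\mathcal{A}0$, using nothing beyond the hypothesis $0\in\mathcal{A}$. No further comment is needed.
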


\begin{proof}
    Observe that $\overline{0}\cong0\in\mathcal{A}$, and so also
    $0+\overline{0}\cong0\equiv_\mathcal{A}0$.
\end{proof}

Given two groups $\mathcal{G}$ and $\mathcal{H}$, recall the notation
`$\mathcal{G}\leq\mathcal{H}$' used to mean that $\mathcal{G}$ is a subgroup of
$\mathcal{H}$. Also recall that the set of invertible elements of a monoid
always forms a group. Now, if $\mathcal{A}$ is a monoid of games, is it true
that $\mathcal{A}^{\overline{\times}}\leq\mathcal{A}^\times$? In particular, if
$G,H\in\mathcal{A}^{\overline{\times}}$, is it true that
$G+H\in\mathcal{A}^{\overline{\times}}$? Indeed, the answer is yes.

\begin{proposition}
    \label{prop:conj-subgroup}
    If $\mathcal{A}$ is a monoid of games, then
    $\mathcal{A}^{\overline{\times}}\leq\mathcal{A}^\times$.
\end{proposition}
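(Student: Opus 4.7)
The plan is to verify the standard subgroup criteria directly. We already know that $\mathcal{A}^{\overline{\times}} \subseteq \mathcal{A}^{\times}$, and by \cref{prop:0-invertible} we have $0 \in \mathcal{A}^{\overline{\times}}$, so $\mathcal{A}^{\overline{\times}}$ is a non-empty subset of the group $\mathcal{A}^{\times}$. It therefore suffices to show closure under the monoid operation and closure under taking inverses within $\mathcal{A}^{\times}$.

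First I would handle closure under inverses, which is essentially formal. If $G \in \mathcal{A}^{\overline{\times}}$, then $\overline{G} \in \mathcal{A}$ and $G + \overline{G} \equiv_\mathcal{A} 0$. Since $\overline{\overline{G}} \cong G \in \mathcal{A}$ and $\overline{G} + \overline{\overline{G}} \cong \overline{G} + G \equiv_\mathcal{A} 0$, the element $\overline{G}$ is itself conjugate $\mathcal{A}$-invertible. Moreover, the inverse of $G$ in $\mathcal{A}^{\times}$ is unique (as $\mathcal{A}$ is a monoid), so this $\overline{G}$ is exactly the group-theoretic inverse of $G$, confirming that $\mathcal{A}^{\overline{\times}}$ is closed under the inverse operation inherited from $\mathcal{A}^{\times}$.

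For closure under addition, take $G, H \in \mathcal{A}^{\overline{\times}}$. Then $\overline{G}, \overline{H} \in \mathcal{A}$, so by additive closure of the monoid $\mathcal{A}$ we get $\overline{G+H} \cong \overline{G} + \overline{H} \in \mathcal{A}$. Using that $\equiv_\mathcal{A}$ is a congruence on the monoid $\mathcal{A}$, we can compute
\[
    (G + H) + \overline{G + H}
    \cong (G + \overline{G}) + (H + \overline{H})
    \equiv_\mathcal{A} 0 + 0
    \cong 0,
\]
so $G + H \in \mathcal{A}^{\overline{\times}}$. Combining this with the previous paragraph proves $\mathcal{A}^{\overline{\times}} \leq \mathcal{A}^{\times}$.

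I do not anticipate any real obstacle: every step uses only the monoid axioms for $\mathcal{A}$ and the elementary fact that conjugation is an involution that distributes over addition. The only point worth flagging is that the argument silently uses that $\equiv_\mathcal{A}$ respects addition on $\mathcal{A}$, which is exactly what is packaged into the hypothesis that $\mathcal{A}$ is a monoid of games.
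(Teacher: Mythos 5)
Your proof is correct and takes essentially the same approach as the paper: the central computation $(G+H)+\overline{G+H}\cong(G+\overline{G})+(H+\overline{H})\equiv_\mathcal{A}0$ is exactly the paper's argument for closure under addition. Your additional explicit checks (non-emptiness via \cref{prop:0-invertible} and closure under inverses) merely spell out what the paper dismisses with the remark that $\mathcal{A}^{\overline{\times}}$ is clearly closed under conjugation.
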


\begin{proof}
    Let $G,H\in\mathcal{A}^{\overline{\times}}$. Since
    $\mathcal{A}^{\overline{\times}}$ is clearly closed under conjugation, we
    have $\overline{G},\overline{H}\in\mathcal{A}^{\overline{\times}}$, and in
    particular that
    $\overline{G+H}\cong\overline{G}+\overline{H}\in\mathcal{A}$. Now observe
    \begin{align*}
        G+H+\overline{G+H}&\cong G+H+\overline{G}+\overline{H}\\
                          &\cong G+\overline{G}+H+\overline{H}\\
                          &\equiv_\mathcal{A}0.
    \end{align*}
\end{proof}

Given \cref{prop:conj-subgroup}, it is natural to wonder whether
$\mathcal{A}^\times=\mathcal{A}^{\overline{\times}}$. In general, such a
statement is not true. We give a definition here for when the
$\mathcal{A}$-invertible elements (of a set of games $\mathcal{A}$) do coincide
precisely with the conjugate $\mathcal{A}$-invertible elements; this is
essentially the \emph{conjugate property} that other authors mention, but here
we have a different notation and a slightly more general context.

\begin{definition}
    If $\mathcal{A}$ is a set of games such that
    $\mathcal{A}^{\overline{\times}}=\mathcal{A}^\times$, then we say
    $\mathcal{A}$ has the \emph{conjugate property}.
\end{definition}

We will begin to show now that every universe has the conjugate property. In
order to do so, we will first prove a technical lemma that will also be helpful
to us soon after in our characterisation of the invertible elements of each
universe. The proof of this lemma uses a technique from the beginning of a
proof of Ettinger's \cite[Proof of Theorem 14 on pp.~48--51]{ettinger:topics},
and this technique has been mentioned and used several times in these kinds of
arguments (for example, see \cite[Proofs of Theorems 36 and 37 on
pp.~20--23]{larsson.milley.ea:progress}).

\begin{lemma}
    \label{lem:tech}
    If $\mathcal{U}$ is a universe, and $G,H\in\hat{\mathcal{U}}$ are in
    $\mathcal{U}$-simplest form with $G+H\equiv_\mathcal{U}0$, then: for all
    $G^L$, there exists $H^R$ such that $G^L+H^R\equiv_\mathcal{U}0$.
\end{lemma}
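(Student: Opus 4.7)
My plan is to apply \cref{thm:comparison} to both inequalities constituting $G + H \equiv_\mathcal{U} 0$, use \cref{thm:pomonoid} together with the $\mathcal{U}$-simplest form hypothesis to rule out reversibility-type responses, and finally match the resulting $\leq_\mathcal{U}$ and $\geq_\mathcal{U}$ witnesses into a common pair via an iterative cycle argument. The first part is the Ettinger-style trick referenced in the text.

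For the $\leq_\mathcal{U}$ direction, apply condition (b) of \cref{thm:comparison} to $0 \geq_\mathcal{U} G + H$ at the Left option $G^L + H$ of $G + H$. Since $0$ has no Left options, some Right option of $G^L + H$ --- either $G^{LR} + H$ or $G^L + H^R$ --- must satisfy $\leq_\mathcal{U} 0$. The case $G^{LR} + H \leq_\mathcal{U} 0$ is excluded: adding $G$ via \cref{thm:pomonoid} and invoking $G + H \equiv_\mathcal{U} 0$ forces $G \geq_\mathcal{U} G^{LR}$, making $G^L$ a $\mathcal{U}$-reversible option of $G$ and contradicting simplest form. Hence some $H^R$ satisfies $G^L + H^R \leq_\mathcal{U} 0$. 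Symmetrically, applying condition (a) to $G + H \geq_\mathcal{U} 0$ at the Right option $G + H^R$ (and ruling out $G + H^{RL} \geq_\mathcal{U} 0$ via $H$'s simplest form) yields: for every $H^R$, some $G^{L'}$ satisfies $G^{L'} + H^R \geq_\mathcal{U} 0$.

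To match these witnesses into a common pair giving $G^L + H^R \equiv_\mathcal{U} 0$, I would iterate. Set $G^{L_0} := G^L$ and, at step $k$, pick $H^R_k$ with $G^{L_k} + H^R_k \leq_\mathcal{U} 0$, then $G^{L_{k+1}}$ with $G^{L_{k+1}} + H^R_k \geq_\mathcal{U} 0$. Since $G$ has finitely many Left options, $G^{L_m} = G^{L_n}$ for some $m < n$. Summing the $\leq_\mathcal{U}$ inequalities over $k = m, \ldots, n-1$ gives $\sum_{k=m}^{n-1}(G^{L_k} + H^R_k) \leq_\mathcal{U} 0$; summing the $\geq_\mathcal{U}$ inequalities and cyclically reindexing via $G^{L_n} = G^{L_m}$ makes the two sums equal, so $\sum_{k=m}^{n-1}(G^{L_k} + H^R_k) \equiv_\mathcal{U} 0$. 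A short pomonoid argument --- if a sum of elements each $\leq_\mathcal{U} 0$ is itself $\equiv_\mathcal{U} 0$ then every summand is $\equiv_\mathcal{U} 0$ (isolate one summand, note the rest is also $\leq_\mathcal{U} 0$ and forces $\equiv_\mathcal{U} 0$, then peel off) --- then yields $G^{L_k} + H^R_k \equiv_\mathcal{U} 0$ for every $k$ in the cycle.

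The main obstacle I anticipate is ensuring that the cycle actually contains the starting Left option $G^L = G^{L_0}$ rather than being entered only after a transient tail. I expect to handle this by exploiting the freedom in choosing $H^R_k, G^{L_{k+1}}$ at each step, or by running a symmetric backward iteration and appealing to \cref{thm:comparison} to argue every Left option of $G$ must lie on some such cycle. If that cannot be arranged combinatorially, an alternative route would appeal to the Left-/Right-$\mathcal{U}$-strongness that conditions (c) and (d) of \cref{thm:comparison} guarantee for $G + H$ in order to transfer the required strongness to $G^L + H^R$, and close the $\geq_\mathcal{U}$ direction for the same $H^R$ without iteration.
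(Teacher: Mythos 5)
Your opening moves coincide with the paper's: the same Ettinger-style application of \cref{thm:comparison} to both directions of $G+H\equiv_\mathcal{U}0$, the same use of \cref{thm:pomonoid} and the simplest-form hypothesis to exclude the $G^{LR}+H$ and $G+H^{RL}$ responses, and the same alternating chain $G^{L_0},H^{R_0},G^{L_1},H^{R_1},\dots$ closed off by pigeonhole and by summing the two families of inequalities over a cycle. Your peeling observation---that in a pomonoid a sum of elements each $\leq_\mathcal{U}0$ which is itself $\equiv_\mathcal{U}0$ forces every summand to be $\equiv_\mathcal{U}0$---is correct and is a clean way to extract equalities from the cycle without the strictness bookkeeping the paper performs.

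The gap is exactly the one you flag, and none of your three proposed remedies closes it as stated. The chain may have a transient tail: pigeonhole gives $G^{L_m}\cong G^{L_n}$ with $m<n$ but possibly $m>0$, so your summation yields $G^{L_k}+H^{R_k}\equiv_\mathcal{U}0$ only for $m\leq k\leq n-1$ and says nothing about the arbitrary starting option $G^{L_0}=G^L$. ``Freedom in choosing'' does not obviously help, since the witnesses are whatever \cref{thm:comparison} hands you; and the strongness-only route cannot establish $G^L+H^R\geq_\mathcal{U}0$, because that direction requires the maintenance conditions (a)--(b), not merely the proviso. The missing ingredient is a back-propagation step exploiting the absence of dominated options in simplest forms: if $G^{L_{k+1}}+H^{R_{k+1}}\equiv_\mathcal{U}0$, then adding $H^{R_{k+1}}$ to $G^{L_{k+1}}+H^{R_k}\geq_\mathcal{U}0$ gives $H^{R_k}\geq_\mathcal{U}H^{R_{k+1}}$, whence $H^{R_k}\cong H^{R_{k+1}}$ and so $G^{L_{k+1}}+H^{R_k}\equiv_\mathcal{U}0$; then adding $G^{L_{k+1}}$ to $G^{L_k}+H^{R_k}\leq_\mathcal{U}0$ gives $G^{L_k}\leq_\mathcal{U}G^{L_{k+1}}$, whence $G^{L_k}\cong G^{L_{k+1}}$ and so $G^{L_k}+H^{R_k}\equiv_\mathcal{U}0$. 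Iterating this from index $m$ back to index $0$ completes your argument. The paper sidesteps the issue by running the whole construction as a proof by contradiction: it supposes the initial pair satisfies $G^{L_1}+H^{R_1}<_\mathcal{U}0$ strictly, propagates strictness forward along the chain by essentially the same domination argument, and then contradicts the cycle sums; with your back-propagation patch in place, your direct version is a legitimate (and arguably tidier) alternative.
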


\begin{proof}
    If $G$ is a Left end, then the conclusion is vacuously true. So, assume
    that $G$ is not a Left end.

    Since $G+H\equiv_\mathcal{U}0$, we know in particular that
    $G+H\leq_\mathcal{U}0$. Pick an arbitrary Left option $G^{L_1}$. By
    \cref{thm:comparison}, it follows that there must exist some Right option
    $(G^{L_1}+H)^{R_1}$ of $G^{L_1}+H$ such that
    $(G^{L_1}+H)^{R_1}\leq_\mathcal{U}0$. Such a Right option must be of one of
    the following forms:
    \begin{multicols}{2}
        \begin{enumerate}
            \item
                $G^{L_1R_1}+H$; or
            \item
                $G^{L_1}+H^{R_1}$.
        \end{enumerate}
    \end{multicols}
    In the first case, where we have $G^{L_1R_1}+H\leq_\mathcal{U}0$, we may
    add $G$ to both sides to obtain $G^{L_1R_1}\leq_\mathcal{U}G$. (Note that
    we are allowed to add $G$ to both sides since
    $(\hat{\mathcal{U}},\geq_\mathcal{U})$ is a pomonoid by
    \cref{thm:pomonoid}, and $G^{L_1R_1},G,H\in\hat{\mathcal{U}}$.) This tells
    us that $G^{L_1}$ is a reversible Left option of $G$. But this contradicts
    $G$ being in simplest form. Thus, we must be in the second case: the Right
    option must be of the form $G^{L_1}+H^{R_1}$.

    So, recall that we have $G^{L_1}+H^{R_1}\leq_\mathcal{U}0$. If we have
    equality, then we have the result. As such, we will suppose, for a
    contradiction, that we have the strict inequality
    $G^{L_1}+H^{R_1}<_\mathcal{U}0$. Now, given $H^{R_1}$, we can apply a
    symmetric argument to the one before to obtain some $G^{L_2}$ such that
    $G^{L_2}+H^{R_1}\geq_\mathcal{U}0$. We can clearly continue in this way so
    that, given $G^{L_i}$, we may find some $H^{R_i}$ such that
    \begin{equation}
        \label{eq:a}
        G^{L_i}+H^{R_i}\leq_\mathcal{U}0;
    \end{equation}
    and given $H^{R_i}$, we may find some $G^{L_{i+1}}$ such that
    \begin{equation}
        \label{eq:b}
        G^{L_{i+1}}+H^{R_i}\geq_\mathcal{U}0.
    \end{equation}

    We will show by induction that each inequality of type (\ref{eq:b}) is
    strict. If we have equality for $i=1$, then adding $G^{L_1}$ to both sides
    yields $G^{L_2}+G^{L_1}+H^{R_1}\equiv_\mathcal{U}G^{L_1}$. But
    $G^{L_1}+H^{R_1}<_\mathcal{U}0$ by supposition, and so we obtain
    $G^{L_1}\leq_\mathcal{U}G^{L_2}$. Since $G$ is in $\mathcal{U}$-simplest
    form, this implies $G^{L_1}\cong G^{L_2}$. But then we have
    \[
        0\leq_\mathcal{U}G^{L_2}+H^{R_1} \cong G^{L_1}+H^{R_1} <_\mathcal{U}0,
    \]
    which is a contradiction. Hence, $G^{L_2}+H^{R_1}>_\mathcal{U}0$.

    Now suppose $G^{L_{i+1}}+H^{R_i}>_\mathcal{U}0$ for all $i\leq k$. Suppose
    further, for a contradiction, that
    $G^{L_{k+2}}+H^{R_{k+1}}\equiv_\mathcal{U}0$. We may add $G^{L_{k+2}}$ to
    both sides of $G^{L_{k+1}}+H^{R_{k+1}}\leq_\mathcal{U}0$ to obtain
    $G^{L_{k+1}}\leq_\mathcal{U}G^{L_{k+2}}$. Since $G$ is in
    $\mathcal{U}$-simplest form, we must have
    $G^{L_{k+1}}\cong_\mathcal{U}G^{L_{k+2}}$.

    We have $G^{L_{k+1}}+H^{R_k}\geq_\mathcal{U}0$ from (\ref{eq:b}). Adding
    $H^{R_{k+1}}$ to both sides yields $G^{L_{k+1}}+H^{R_{k+1}}+H^{R_k}\geq
    H^{R_{k+1}}$. Since $G^{L_{k+2}}+H^{R_{k+1}}\equiv_\mathcal{U}0$ by
    supposition, and $G^{L_{k+1}}\cong G^{L_{k+2}}$ from earlier, we obtain
    $H^{R_k}\geq_\mathcal{U}H^{R_{k+1}}$. Because $H$ is in
    $\mathcal{U}$-simplest form, we must have $H^{R_k}\cong H^{R_{k+1}}$. But
    then observe
    \[
        0\geq_\mathcal{U}G^{L_{k+1}}+H^{R_{k+1}} \cong G^{L_{k+1}}+H^{R_k}
        >_\mathcal{U}0,
    \]
    which is a contradiction.

    Recall that $G$ and $H$ are short games---in particular, they have only a
    finite number of options. Thus, there must exist integers $a<b$ such that
    $G^{L_a}\cong G^{L_b}$. Summing the inequalities of type (\ref{eq:a}) over
    this range, we obtain
    \[
        S_1\coloneq\sum_{i=a}^{b-1}\left(G^{L_i}+H^{R_i}\right)\leq_\mathcal{U}0.
    \]
    Similarly, summing the inequalities of type (\ref{eq:b}), we obtain
    \[
        S_2\coloneq\sum_{i=a}^{b-1}\left(G^{L_{i+1}}+H^{R_i}\right)>_\mathcal{U}0.
    \]
    But $G^{L_a}\cong G^{L_b}$, and so $S_1\cong S_2$. But this implies
    $0<_\mathcal{U}S_1\leq_\mathcal{U}0$, which is a contradiction.
\end{proof}

Recall again that $(\hat{\mathcal{U}},\geq_\mathcal{U})$ is a pomonoid
(\cref{thm:pomonoid}) and, importantly, that every game in $\hat{\mathcal{U}}$
is equal to some game in $\mathcal{U}$. Thus, we may pass freely between the
two; if we have some $G\in\hat{\mathcal{U}}$ and find another
$H\in\hat{\mathcal{U}}$ such that $G+H\equiv_\mathcal{U}0$, then $G$ is
$\mathcal{U}$-invertible, even though $H$ might not be in $\mathcal{U}$.

\begin{theorem}
    \label{thm:conjugate-property}
    Every universe $\mathcal{U}$ has the conjugate property.
\end{theorem}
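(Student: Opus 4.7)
The plan is to induct on the formal birthday of $G$, and to reduce the conclusion to showing that any $\mathcal{U}$-inverse $H \in \mathcal{U}$ of $G$ satisfies $H \equiv_\mathcal{U} \overline{G}$. Once this equivalence is established, \cref{thm:pomonoid} yields $G + \overline{G} \equiv_\mathcal{U} G + H \equiv_\mathcal{U} 0$, so $G \in \mathcal{U}^{\overline{\times}}$, which is the non-trivial containment $\mathcal{U}^\times \subseteq \mathcal{U}^{\overline{\times}}$. The base case $G \cong 0$ is \cref{prop:0-invertible}. For the inductive step, first reduce $G$ and $H$ to their $\mathcal{U}$-simplest forms, which lie in $\hat{\mathcal{U}}$ and satisfy the hypotheses of \cref{lem:tech}.

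Now apply \cref{lem:tech} in four orientations: directly to $G + H \equiv_\mathcal{U} 0$; to its swap $H + G \equiv_\mathcal{U} 0$; to the conjugated identity $\overline{G} + \overline{H} \equiv_\mathcal{U} 0$ (legitimate because $\mathcal{U}$ is closed under conjugation and simplest-form status is preserved under conjugation); and to the swap of that. These give, respectively, for every $G^L$ some $H^R$, for every $H^L$ some $G^R$, for every $G^R$ some $H^L$, and for every $H^R$ some $G^L$, with each matched pair summing to $0$ modulo $\mathcal{U}$. Every such partner is therefore $\mathcal{U}$-invertible with strictly smaller formal birthday, so the inductive hypothesis applies to each. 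Uniqueness of inverses in the monoid $(\hat{\mathcal{U}}, \equiv_\mathcal{U})$ of \cref{thm:monoid} then yields $H^R \equiv_\mathcal{U} \overline{G^L}$ (both being inverses of $G^L$), and analogously $H^L \equiv_\mathcal{U} \overline{G^R}$, $G^L \equiv_\mathcal{U} \overline{H^R}$, and $G^R \equiv_\mathcal{U} \overline{H^L}$.

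What remains is to assemble $H \equiv_\mathcal{U} \overline{G}$ by verifying the four clauses of \cref{thm:comparison} in each of the two directions. The option correspondences above dispatch the maintenance clauses (a) and (b) immediately. The main obstacle is the provisos (c) and (d): for $H \geq_\mathcal{U} \overline{G}$ we must show that $\overline{G}$ being Left end-like (i.e., $G$ being Right end-like) forces $H$ to be Left $\mathcal{U}$-strong, and dually that $H$ being Right end-like forces $G$ to be Left $\mathcal{U}$-strong, with symmetric requirements for $\overline{G} \geq_\mathcal{U} H$. The plan here is to extract both Left and Right $\mathcal{U}$-strength of $G + H$ from $G + H \equiv_\mathcal{U} 0$ via \cref{prop:comp-implies}, and then, under the relevant end-like hypothesis on $G$ or $H$, to transfer that strength to $H$ or $G$ alone---the end-like component offers neither player an ordinary response of the relevant colour, and the tombstone bookkeeping of the simplest form is precisely what records its $\mathcal{U}$-strength. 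Once all four provisos are verified, $H \equiv_\mathcal{U} \overline{G}$ follows and the induction is complete.
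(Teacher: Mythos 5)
Your argument tracks the paper's own proof through its main engine: reduce $G$ and $H$ to $\mathcal{U}$-simplest form, apply \cref{lem:tech} in all orientations to pair off options, and use induction together with uniqueness of inverses in $(\hat{\mathcal{U}},\equiv_\mathcal{U})$ to identify each $H^R$ with $\overline{G^L}$ and each $H^L$ with $\overline{G^R}$. That correctly dispatches the maintenance clauses of \cref{thm:comparison}. The gap is in the proviso. Because $H$ and $\overline{G}$ end up with identical ordinary options, the only genuinely troublesome configuration is the one where $G$ carries both tombstones while $H$ carries none (or the conjugate situation); there you must show that $H$ --- a form with \emph{no} tombstones and with ordinary options of the relevant colour --- is nonetheless Left and Right $\mathcal{U}$-strong. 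Your stated mechanism, that ``the end-like component offers neither player an ordinary response of the relevant colour,'' rests on a false premise: a Right end-like augmented form may perfectly well have ordinary Right options (e.g.\ $\{0\mid0,\tomb\}$), so $G$ being end-like does not silence its ordinary moves. Nor does the Left $\mathcal{U}$-strength of $G+H$, which you correctly obtain from \cref{prop:comp-implies}, transfer to $H$ alone by anything you have written; in this configuration the claim ``$H$ is Left $\mathcal{U}$-strong'' is essentially equivalent to $H\equiv_\mathcal{U}\overline{G}$ itself, so the step is circular as sketched.

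The paper escapes exactly here by changing target: in the hard tombstone configuration it does not compare $H$ with $\overline{G}$ at all, but instead proves $G+\overline{G}\geq_\mathcal{U}0$ directly. The payoff is that $G+\overline{G}$ inherits a Left and a Right tombstone from $G$, so it is Left and Right end-like and both halves of the proviso become trivial ($0$ being a Right end handles the other side), while the maintenance property follows from \cref{lem:tech} just as in your sketch; symmetry of the form $G+\overline{G}$ then upgrades the single inequality to equivalence. To repair your proof, either adopt that manoeuvre, or supply an independent argument that $H$ is Left and Right $\mathcal{U}$-strong in the both-tombstones-versus-none case. As written, the proviso verification does not go through.
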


\begin{proof}
    We know that 0 is conjugate $\mathcal{U}$-invertible by
    \cref{prop:0-invertible}, so let $G\in\hat{\mathcal{U}}$ be a non-zero,
    $\mathcal{U}$-invertible game; write $H$ for the $\mathcal{U}$-inverse of
    $G$ (and note that it must also be non-zero). We may assume that $G$ and
    $H$ are in $\mathcal{U}$-simplest form. We want to show that
    $G+\overline{G}\equiv_\mathcal{U}0$. Since $G+\overline{G}$ is a symmetric
    form, it suffices to show that either $G+\overline{G}\geq_\mathcal{U}0$ or
    $G+\overline{G}\leq_\mathcal{U}0$. It also suffices to show that
    $\overline{G}\equiv_\mathcal{U}H$. (We will make use of each sufficient
    condition.)

    We show first that $H$ and $\overline{G}$ have identical ordinary options;
    we induct on $\birth(G+H)$. By \cref{lem:tech}, for every Left option
    $G^L$, there exists a Right option $H^R$ such that
    $G^L+H^R\equiv_\mathcal{U}0$. By induction, for every $G^L$, we have
    $G^L+\overline{G^L}\equiv_\mathcal{U}0$, and hence $H^R\cong\overline{G^L}$
    (since $G$ and $H$ are both in $\mathcal{U}$-simplest form). By a symmetric
    argument: for every Right option $G^R$, there exists a Left option $H^L$
    such that $G^R+H^L\equiv_\mathcal{U}0$. By induction, we have
    $G^R+\overline{G^R}\equiv_\mathcal{U}0$, and hence
    $H^L\cong\overline{G^R}$. We have now shown that the ordinary options of
    $\overline{G}$ form a subset of the ordinary options of $H$. By an
    identical argument, it follows that the ordinary options of $H$ form a
    subset of the ordinary options of $\overline{G}$.

    Note that, if $\overline{G}$ is comparable with $H$ (modulo $\mathcal{U}$),
    then we are done: say, without loss of generality, that
    $\overline{G}\geq_\mathcal{U}H$, then we obtain
    $G+\overline{G}\geq_\mathcal{U}0$ by adding $G$ to both sides.

    Now, since $\overline{G}$ and $H$ have the same ordinary options, it is
    clear that the only cases where it is not immediately obvious that
    $\overline{G}$ and $H$ are comparable are where $G$ has both tombstones and
    $H$ has none, and the symmetric case where $G$ has no tombstones and $H$
    has both (this follows easily from the observation that removing a Left
    tombstone, or adding a Right tombstone, results in a form at least as good
    for Right, et similia). By symmetry, we need only consider the former case.

    We will show that $G+\overline{G}\geq_\mathcal{U}0$ via
    \cref{thm:comparison}. The argument will be clearer if we write down the
    options of $G+\overline{G}$ explicitly:
    \begin{align*}
        G+\overline{G}&\cong\{\tomb,G^L+\overline{G},G+\overline{G}^L\mid
        G^R+\overline{G},G+\overline{G}^R,\tomb\}\\
                      &\cong\{\tomb,G^L+\overline{G},G+\overline{G^R}\mid
                      G^R+\overline{G},G+\overline{G^L},\tomb\}\\
                      &\cong\{\tomb,G^L+\overline{G},G+H^L\mid
                      G^R+\overline{G},G+H^R,\tomb\}.
    \end{align*}

    First, we show that for every $(G+\overline{G})^L$ there exists some
    $(G+\overline{G})^{LR}\geq_\mathcal{U}0$. Since $G+\overline{G}$ is a
    symmetric form, we may assume that an arbitrary Right option is of the form
    $G^R+\overline{G}$. Since $G+H\equiv_\mathcal{U}0$ by supposition, we know
    by \cref{lem:tech} that there exists some $H^L$ with
    $G^R+H^L\equiv_\mathcal{U}0$. Since $\overline{G}$ has a Left option to
    $H^L$, we are done. Furthermore, since 0 has no Left options, the rest of
    the maintenance property is vacuously satisfied.

    It remains to show the proviso. We know that 0 is Right
    $\mathcal{U}$-strong, which completes one part. For the other, we remark
    that $G+\overline{G}$ must be Left $\mathcal{U}$-strong since it has a Left
    tombstone. Thus, we have the result.
\end{proof}

Unsurprisingly, a mis\`ere monoid can indeed have the conjugate property
without being a universe. The impartial mis\`ere monoid is one such example
\cite[Corollary 38 on p.~23]{larsson.milley.ea:progress}. For another, consider
the monoid of (game) integers $\mathcal{S}=\{n:n\in\mathbb{Z}\}$.\footnote{
    The reader must excuse our abuse of terminology here, but this is just a
    humourous incident.
}
Amusingly, $(\mathcal{S},\equiv_\mathcal{S})$ is isomorphic (under the natural
mapping) to the group of integers $\mathbb{Z}$ (under addition). Thus,
$\mathcal{S}$ has the conjugate property, but is clearly not a universe---it
does not satisfy the dicotic closure property (and this is the only property of
a universe that it does not satisfy, just like the impartial monoid).

With our conjugate property in hand, we can already give a characterisation of
$\mathcal{U}^\times$ for each universe $\mathcal{U}$.

\begin{theorem}
    \label{thm:invertibility-characterisation}
    If $\mathcal{U}$ is a universe and $G\in\hat{\mathcal{U}}$, then: $G$ is
    $\mathcal{U}$-invertible if and only if $G+\overline{G}$ is Left
    $\mathcal{U}$-strong and every option of the $\mathcal{U}$-simplest form of
    $G$ is $\mathcal{U}$-invertible.\footnote{
        For those interested, we used the working term \emph{strong mirror} to
        describe a game $G$ where $G'+\overline{G'}$ was Left
        $\mathcal{U}$-strong for every subposition $G'$ of $G$. In this way, a
        game $G\in\hat{\mathcal{U}}$ is $\mathcal{U}$-invertible if and only if
        its $\mathcal{U}$-simplest form is a strong mirror.
    }
\end{theorem}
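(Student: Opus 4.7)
The approach is to prove the two directions separately. The forward direction falls out of \cref{thm:conjugate-property} and \cref{lem:tech}, while the backward direction is a direct application of the comparison test (\cref{thm:comparison}) to $G^*+\overline{G^*}$, where $G^*$ denotes the $\mathcal{U}$-simplest form of $G$.

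For $(\Rightarrow)$, let $G^*$ be the $\mathcal{U}$-simplest form of $G$; then $G^*$ is $\mathcal{U}$-invertible and lies in $\hat{\mathcal{U}}$. \Cref{thm:conjugate-property} yields $G^*+\overline{G^*}\equiv_\mathcal{U}0$, whence $G+\overline{G}\equiv_\mathcal{U}0$ by the monoid structure (\cref{thm:monoid}); in particular, $G+\overline{G}$ is Left $\mathcal{U}$-strong. Because the simplest-form reductions commute with conjugation, $\overline{G^*}$ is itself in $\mathcal{U}$-simplest form, so \cref{lem:tech} applied to the pair $(G^*,\overline{G^*})$ yields, for each ordinary $G^{*L}$, some $\overline{G^*}^R\in\hat{\mathcal{U}}$ with $G^{*L}+\overline{G^*}^R\equiv_\mathcal{U}0$; hence $G^{*L}$ is $\mathcal{U}$-invertible. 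A symmetric argument handles every ordinary $G^{*R}$.

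For $(\Leftarrow)$, we again have $G+\overline{G}\equiv_\mathcal{U}G^*+\overline{G^*}$ via \cref{thm:monoid}, and Left $\mathcal{U}$-strongness is an invariant of $\equiv_\mathcal{U}$, so the hypothesis transfers to $G^*+\overline{G^*}$. Since $G^*+\overline{G^*}$ is symmetric, it suffices to prove $G^*+\overline{G^*}\geq_\mathcal{U}0$ via \cref{thm:comparison}. For the maintenance property, $0$ has no Left options, so it remains to exhibit, for each ordinary Right option $X$ of $G^*+\overline{G^*}$, some Left option $X^L\geq_\mathcal{U}0$: if $X=G^{*R}+\overline{G^*}$ take $X^L=G^{*R}+\overline{G^{*R}}$; if $X=G^*+\overline{G^{*L}}$ take $X^L=G^{*L}+\overline{G^{*L}}$. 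In each case, the option-invertibility hypothesis on $G^*$ combined with \cref{thm:conjugate-property} makes $X^L\equiv_\mathcal{U}0$. The proviso is immediate: when $0$ is treated as Left end-like, the required Left $\mathcal{U}$-strongness of $G^*+\overline{G^*}$ is exactly our hypothesis; and when $G^*+\overline{G^*}$ happens to be Right end-like, $0$ is trivially Right $\mathcal{U}$-strong.

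The main obstacle I anticipate is the tombstone bookkeeping intrinsic to Siegel's framework. I will need to confirm that (i)~conjugation commutes with the simplest-form reductions, so that $\overline{G^*}$ is genuinely in simplest form (used in the forward direction to apply \cref{lem:tech}), (ii)~ordinary options of a form in $\hat{\mathcal{U}}$ are themselves in $\hat{\mathcal{U}}$, so that the recursive clause ``every option\dots\ is $\mathcal{U}$-invertible'' is well-posed, and (iii)~Left $\mathcal{U}$-strongness is preserved under $\equiv_\mathcal{U}$, so the hypothesis on $G+\overline{G}$ transfers to $G^*+\overline{G^*}$. All three should follow routinely from the definitions in \cref{sec:prelims}, but they are the points where the argument is most likely to absorb implicit assumptions that ought to be stated explicitly.
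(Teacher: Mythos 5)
Your proposal is correct and follows essentially the same route as the paper: the forward direction via \cref{thm:conjugate-property} together with \cref{lem:tech} applied to the simplest form and its conjugate, and the backward direction via \cref{thm:comparison}, witnessing each Right option $G^R+\overline{G}$ (resp.\ $G+\overline{G^L}$) by the Left option $G^R+\overline{G^R}$ (resp.\ $G^L+\overline{G^L}$) and discharging the proviso by the Left $\mathcal{U}$-strongness hypothesis. The bookkeeping points you flag (conjugation commuting with simplest forms, heredity of $\hat{\mathcal{U}}$, invariance of strongness under $\equiv_\mathcal{U}$) are exactly the steps the paper passes over implicitly, and they do hold.
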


\begin{proof}
    First, suppose that $G+\overline{G}$ is Left $\mathcal{U}$-strong and every
    option of the $\mathcal{U}$-simplest form of $G$ is
    $\mathcal{U}$-invertible. We may assume that $G$ is in simplest form. We
    want to show that $G+\overline{G}\equiv_\mathcal{U}0$. Since
    $G+\overline{G}$ is a symmetric form, we need only show that
    $G+\overline{G}\geq_\mathcal{U}0$. We know that $G+\overline{G}$ is Left
    $\mathcal{U}$-strong, hence also Right $\mathcal{U}$-strong, and so it
    suffices to show, without loss of generality, that, for every Right option
    $G^R$ of $G$, there exists a Left option
    $(G^R+\overline{G})^L\geq_\mathcal{U}0$. But $G^R$ is an option of $G$, and
    hence $\mathcal{U}$-invertible by supposition, and hence also conjugate
    $\mathcal{U}$-invertible by \cref{thm:conjugate-property}. Thus,
    $G^R+\overline{G^R}\equiv_\mathcal{U}0$, and we know that $\overline{G^R}$
    is a Left option of $\overline{G}$, which now yields that $G$ is
    $\mathcal{U}$-invertible.

    Now suppose instead that $G\in\hat{\mathcal{U}}$ is
    $\mathcal{U}$-invertible. We know that $G$ is then also conjugate
    $\mathcal{U}$-invertible by \cref{thm:conjugate-property}. We may assume
    that $G$ is in simplest form.

    Since $G+\overline{G}\equiv_\mathcal{U}0$, we know that $G+\overline{G}$ is
    Left $\mathcal{U}$-strong. Let $G^L$ be a Left option of $G$. By symmetry,
    it remains only to show that $G^L$ is $\mathcal{U}$-invertible: by
    \cref{lem:tech}, there exists a Right option $\overline{G}^R$ of
    $\overline{G}$ such that $G^L+\overline{G}^R\equiv_\mathcal{U}0$, yielding
    the result.
\end{proof}

It is most interesting to compare \cref{thm:invertibility-characterisation}
with the characterisation of invertibility in the dicot universe. We restate
the theorem of \cite{fisher.nowakowski.ea:invertible} (with minor modifications
to ease comparison).

\begin{theorem}[{cf.\ \cite[Theorem 12 on
    p.~7]{fisher.nowakowski.ea:invertible}}]
    If $G\in\mathcal{D}$, then $G$ is $\mathcal{D}$-invertible if and only if
    $\outcome(G'+\overline{G'})=\mathscr{N}$ for every subposition $G'$ of the
    $\mathcal{D}$-simplest form of $G$.
\end{theorem}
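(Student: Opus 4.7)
The plan is to deduce this theorem directly from \cref{thm:invertibility-characterisation} by translating the condition ``$G+\overline{G}$ is Left $\mathcal{D}$-strong'' into outcome language in the dicot universe, and then recursing through subpositions.

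First I would observe that $\mathcal{D}$-simplest forms cannot contain any tombstones. A Left option $G^L$ is end-reversible only if $G^L$ is a Left end that admits a Right option $G^{LR}$, but the dicot condition forces the only Left end in $\mathcal{D}$ to be $0$, which has no Right options at all. So no end-reversible options exist in $\mathcal{D}$, and the $\mathcal{D}$-simplest form of any $G\in\mathcal{D}$ already lies within $\mathcal{D}$, with its ordinary options being its only options.

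Next I would translate the strong condition. Since $0$ is the only Left end in $\mathcal{D}$, the form $G+\overline{G}$ is Left $\mathcal{D}$-strong if and only if $\outcomeL(G+\overline{G})=\mathscr{L}$; by the symmetry of $G+\overline{G}$ (conjugating swaps the two summands), this is equivalent to Right also winning moving first, so the condition collapses to $\outcome(G+\overline{G})=\mathscr{N}$.

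Finally, I would apply \cref{thm:invertibility-characterisation} and recurse on the formal birthday of $G$. That theorem gives that $G$ is $\mathcal{D}$-invertible iff $G+\overline{G}$ is Left $\mathcal{D}$-strong and every option of the $\mathcal{D}$-simplest form of $G$ is $\mathcal{D}$-invertible. Using the translation above at each level, and recalling that the simplest form is set up so that the options of a simplest form are themselves simplest forms, the recursion unfolds precisely to: $\outcome(G'+\overline{G'})=\mathscr{N}$ for every subposition $G'$ of the $\mathcal{D}$-simplest form of $G$. The base case $G=0$ is handled by \cref{prop:0-invertible} together with the (mis\`ere) observation that $\outcome(0)=\mathscr{N}$. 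There is no serious obstacle here; once the outcome translation and the absence of tombstones are secured, the rest is routine bookkeeping on subpositions.
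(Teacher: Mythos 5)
The paper itself does not prove this statement: it is quoted from Fisher--Nowakowski--Santos, and the surrounding text merely remarks that it is ``essentially the same'' as \cref{thm:invertibility-characterisation} because, $0$ being the only Left end in $\mathcal{D}$, Left $\mathcal{D}$-strength of the symmetric form $G'+\overline{G'}$ amounts to $\outcome(G'+\overline{G'})=\mathscr{N}$. Your proposal is precisely that remark carried out in detail: the outcome translation and the unfolding of the recursion in \cref{thm:invertibility-characterisation} are exactly the intended comparison, and those parts are fine (including the base case $\outcome(0)=\mathscr{N}$).

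However, your first step is wrong as stated. An option $G^L$ is end-reversible when it reverses \emph{through} a Left end, i.e.\ when there is some $G^{LR}$ with $G\geq_{\mathcal{U}}G^{LR}$ and $G^{LR}$ a Left end --- not when $G^L$ itself is a Left end. In $\mathcal{D}$ the only Left end is $0$, so end-reversibility of $G^L$ means that $G^L$ has $0$ as a Right option and $G\geq_{\mathcal{D}}0$; this certainly occurs (for instance, the Left option $*$ of $\{*\mid*\}$ is end-reversible, since $\{*\mid*\}\geq_{\mathcal{D}}0$). So you cannot conclude that end-reversible options, and hence tombstones, are absent from $\mathcal{D}$-simplest forms, and the claim that the $\mathcal{D}$-simplest form of a dicot ``already lies within $\mathcal{D}$'' is unjustified. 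Fortunately this step is dispensable: outcomes and Left $\mathcal{D}$-strength are defined for augmented forms, the equivalence ``Left $\mathcal{D}$-strong if and only if $\outcomeL=\mathscr{L}$'' holds for them verbatim (again because $0$ is the only Left end in $\mathcal{D}$), and \cref{thm:invertibility-characterisation} already applies to games in $\hat{\mathcal{D}}$ and to the options of their simplest forms. Delete the tombstone discussion, run the same recursion on the (possibly augmented) subpositions of the simplest form, and the argument goes through.
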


Since 0 is the only Left end that is also a dicot, a game $G$ is Left
$\mathcal{D}$-strong if and only if $\outcome(G)=\mathscr{N}$. As such, it is
not hard to see that our theorem (\cref{thm:invertibility-characterisation}) is
stated in essentially the same way, and that it is a straightforward
generalisation. The reader is likely thinking ``of course this was the case,
what other form could it have taken?'' And therein lies some intrigue; let us
restate the theorem of \cite{milley.renault:invertible} for invertibility in
the dead-ending universe (with minor modifications again).

\begin{theorem}[{cf. \cite[Theorems 19 and 22 and
    pp.~11--12]{milley.renault:invertible}}]
    If $G\in\mathcal{E}$, then $G$ is $\mathcal{E}$-invertible if and only if
    no subposition of its $\mathcal{E}$-simplest form has outcome
    $\mathscr{P}$.
\end{theorem}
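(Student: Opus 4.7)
The plan is to derive this theorem from \cref{thm:invertibility-characterisation} by specialising the ``Left $\mathcal{E}$-strong'' clause to an intrinsic outcome condition. First, I would unroll \cref{thm:invertibility-characterisation} recursively down the subposition tree. Since every ordinary option of a $\mathcal{U}$-simplest form is itself in $\mathcal{U}$-simplest form (otherwise a simplification would descend back to the root, contradicting uniqueness), induction on the formal birthday of $G$ yields: $G \in \hat{\mathcal{E}}$ is $\mathcal{E}$-invertible if and only if, for every subposition $G'$ of the $\mathcal{E}$-simplest form of $G$, the symmetric form $G' + \overline{G'}$ is Left $\mathcal{E}$-strong.

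With that reduction in hand, the theorem follows from what I would call the key lemma: for $G' \in \hat{\mathcal{E}}$, the augmented form $G' + \overline{G'}$ is Left $\mathcal{E}$-strong if and only if $\outcome(G') \ne \mathscr{P}$. For the forward direction, I would run a Tweedledum--Tweedledee mirroring argument: given any Left end $X \in \mathcal{E}$, Left plays a winning first move on whichever of $G'$ or $\overline{G'}$ admits one (at least one does, because $\outcome(G') \ne \mathscr{P}$), and thereafter copies each of Right's moves onto the opposite component. The dead-ending property of $\mathcal{E}$ is essential here: whatever moves Right spends on $X$ can never create new Left options there, so the copying remains well-defined and Left is never the first to run out of moves. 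For the converse, I would take $X = -n$ for sufficiently large $n$ (an element of $\mathcal{E}$) and let Right play the mirror strategy: since $\outcome(G') = \mathscr{P}$, whoever is forced to play first on a pruned subposition of $G' + \overline{G'}$ loses, and the stockpile on $-n$ ensures Right can always arrange for Left to be the one to play first in such a position.

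The main obstacle is the converse direction of the key lemma: choosing the witness $X$ and tracking how Right's moves on $X$ interact with the mirroring on $G' + \overline{G'}$, especially when $G'$ carries tombstones and therefore behaves end-likely on one side and not the other. Some care with Siegel's augmented-form bookkeeping is needed so that the mirror strategy terminates in Right's favour rather than accidentally in Left's. This is precisely the kind of subtlety that Milley and Renault navigate in their original proof \cite{milley.renault:invertible}, and adapting it through the augmented-form framework is where the real technical content lies.
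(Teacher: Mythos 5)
First, a point of order: the paper does not prove this statement at all---it is a restatement of Milley and Renault's theorem \cite{milley.renault:invertible}, quoted only to contrast with \cref{thm:invertibility-characterisation}---so there is no in-paper proof to match. More importantly, your derivation has a genuine gap: the ``key lemma'' you rely on is false, and the paper's surrounding discussion says so explicitly (``If it were the case that a game in $\hat{\mathcal{E}}$ doesn't have outcome $\mathscr{P}$ if and only if $G+\overline{G}$ is Left $\mathcal{E}$-strong, then the two results would indeed be trivial translations of each other. But this is not the case''). Only one implication holds pointwise: if $G'+\overline{G'}$ is Left $\mathcal{E}$-strong, then $\outcome(G')\neq\mathscr{P}$. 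The converse---exactly the direction your Tweedledum--Tweedledee argument is meant to establish---fails. A concrete counterexample is $G'=\{*\mid0\}$, a dicot (hence in $\mathcal{E}$) with $\outcome(G')=\mathscr{L}\neq\mathscr{P}$, for which $G'+\overline{G'}=\{*\mid0\}+\{0\mid*\}$ is \emph{not} Left $\mathcal{E}$-strong: against the Left end $\overline{2}\in\mathcal{E}$, Left's only first moves are to $*+\{0\mid*\}+\overline{2}$ and to $\{*\mid0\}+\overline{2}$, and in either case Right answers $\overline{2}\to\overline{1}$ and wins (one checks $\outcomeL\bigl(*+\{0\mid*\}+\overline{1}\bigr)=\mathscr{R}$ and $\outcomeL\bigl(\{*\mid0\}+\overline{1}\bigr)=\mathscr{R}$ directly). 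Structurally, your mirroring strategy breaks for two reasons: after Left's opening, non-mirrored move the two components are no longer conjugates of one another, so ``copying onto the opposite component'' is undefined; and in mis\`ere play the mirroring player makes the \emph{last} move in the paired components and thereby loses---the failure of Tweedledum--Tweedledee is precisely why mis\`ere invertibility is difficult in the first place.

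What is actually true, and what you would need, is the hereditary statement: if \emph{no subposition} of $G'$ has outcome $\mathscr{P}$, then $G'+\overline{G'}$ is Left $\mathcal{E}$-strong. (In the counterexample above the subposition $*$ has outcome $\mathscr{P}$, so both characterisations correctly reject $\{*\mid0\}$; the two conditions agree in aggregate over all subpositions, not subposition-by-subposition.) That hereditary implication cannot be obtained one subposition at a time, and establishing it is essentially the entire content of Milley and Renault's argument; even the easy direction, as the paper notes, leans on waiting games $W_n$ in $\mathcal{E}$ rather than an integer stockpile $\overline{n}$. Your recursive unrolling of \cref{thm:invertibility-characterisation} into the ``strong mirror'' condition is fine (the paper's own footnote records it); it is the bridge from ``Left $\mathcal{E}$-strong'' to ``outcome $\neq\mathscr{P}$'' that is missing, and no pointwise lemma can supply it.
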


This is peculiar. The similarity between our
\cref{thm:invertibility-characterisation} and this result is not as obvious it
was for the dicot characterisation. They must yield the same invertible forms,
of course, but the characterisation in terms of the simplest form being free of
subpositions with outcome $\mathscr{P}$ is fascinating; to eschew references to
being Left $\mathcal{E}$-strong, and instead make reference only to the
outcomes of the subpositions, results in what we consider to be a cleaner
statement than ours.\footnote{Theorem aesthetics are subjective, of course, but
    we find it undeniable that there is something special about the
    invertbility characterisation of \cite{milley.renault:invertible}.
}

If it were the case that a game in $\hat{\mathcal{E}}$ doesn't have outcome
$\mathscr{P}$ if and only if $G+\overline{G}$ is Left $\mathcal{E}$-strong,
then the two results would indeed by trivial translations of each other. But
this is not the case: for example, 1 does not have outcome $\mathscr{P}$, but
$1+\overline{1}$ is certainly Left $\mathcal{E}$-strong. What \emph{is} true,
however, is that $G+\overline{G}$ being Left $\mathcal{E}$-strong implies $G$
cannot have outcome $\mathscr{P}$. This follows from a result in
\cite{davies.mckay.ea:pocancellation}, but is not too hard to see: the proof is
essentially a local response strategy, where Right can leverage a waiting game
$W_n$ of sufficiently large formal birthday to always follow the local
$\mathscr{P}$-strategy and win $G+\overline{G}+W_n$ going second if $G$ has
outcome $\mathscr{P}$ (thus, $G+\overline{G}$ could not be Left
$\mathcal{E}$-strong).

We leave it as an open problem to investigate when alternative
characterisations of invertibility like this exist.
\begin{problem}
    \label{prob:inverses}
    What alternative characterisations exist for the invertible elements of
    mis\`ere universes? What about for the dicot universe, specifically?
\end{problem}

\section{Reduced universes}
\label{sec:reduced}

Even with the results of \cref{sec:invertibility}, the investigations into
mis\`ere invertibility are not over! There are still many questions to answer.
In this section, we will explore when a universe has no invertible elements at
all.

Recall that a monoid is called reduced if the identity is the only invertible
element. Since a universe of games is a monoid (where 0 is the identity), we
will use the same terminology and call a universe \emph{reduced} if it is
reduced as a monoid. Equivalently, a universe is called reduced if
$\mathcal{U}^\times$ is the trivial group.

The full mis\`ere universe $\mathcal{M}$ is an example of a reduced universe,
thanks to the well-known theorem of Mesdal and Ottaway \cite[Theorem 7 on
p.~5]{mesdal.ottaway:simplification}. There exist many more (uncountably many,
in fact), but we will not prove such a statement until the next section where
we prove something stronger still (see the discussion after
\cref{cor:universe-weak}).

Our first result here concerns the possible formal birthdays of
$\mathcal{U}$-invertible games (for a universe $\mathcal{U}$). It is unclear
whether a universe that is not reduced has invertible elements of every formal
birthday, but we can prove a weaker conclusion.

\begin{proposition}
    \label{cor:invertible-birthdays}
    If $\mathcal{U}$ is a universe that is not reduced, then there exists a
    $\mathcal{U}$-invertible game in $\mathcal{\hat{U}}$ of formal birthday $n$
    for all $n\in\mathbb{N}$.
\end{proposition}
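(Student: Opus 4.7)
The plan is to reduce the statement to producing a single $\mathcal{U}$-invertible element $J\in\hat{\mathcal{U}}$ of formal birthday $1$. Given such a $J$ with some inverse $J'\in\mathcal{U}$, the sum of $n$ copies has formal birthday $n$ (because formal birthday is additive over sums of forms), lies in $\hat{\mathcal{U}}$, and has $n\cdot J'$ as a $\mathcal{U}$-inverse (by repeated application of \cref{thm:pomonoid}); together with the game $0$ for the case $n=0$, this furnishes $\mathcal{U}$-invertibles in $\hat{\mathcal{U}}$ of every formal birthday.

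To produce such a $J$, start with any non-zero $\mathcal{U}$-invertible $G_0\in\mathcal{U}\subseteq\hat{\mathcal{U}}$ (which exists because $\mathcal{U}$ is not reduced) and pass to its $\mathcal{U}$-simplest form $G$; by Siegel's lemma on simplest forms, $G\in\hat{\mathcal{U}}$, and $G$ is still $\mathcal{U}$-invertible and satisfies $G\not\equiv_\mathcal{U}0$, so $b\coloneq\birth(G)\geq1$ (the simplest form of $0$ is $0$ itself, of birthday $0$). The key structural point is that Siegel's reductions are applied to every subposition, so being in $\mathcal{U}$-simplest form is hereditary: every subposition of $G$ is itself in simplest form, and also lies in $\hat{\mathcal{U}}$, since any $\mathcal{U}$-expansion of $G$ restricts via hereditary closure to a $\mathcal{U}$-expansion of the subposition. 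Applying \cref{thm:invertibility-characterisation} inductively down subposition depth---$G$ is $\mathcal{U}$-invertible, so each of its ordinary options is; each option is itself in simplest form, hence its ordinary options are in turn $\mathcal{U}$-invertible; and so on---shows that every subposition of $G$ is $\mathcal{U}$-invertible.

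It remains to descend from $G$ to an invertible of birthday $1$. Setting $G^{(0)}\coloneq G$, at each stage where $\birth(G^{(i)})\geq1$ there must exist an ordinary option of $G^{(i)}$ of birthday exactly $\birth(G^{(i)})-1$, by definition of formal birthday; pick one and call it $G^{(i+1)}$. Then $J\coloneq G^{(b-1)}$ is the desired $\mathcal{U}$-invertible element of $\hat{\mathcal{U}}$ of formal birthday $1$, and the reduction is complete. The main subtlety worth flagging is the hereditary nature of simplest-form status---without this observation, \cref{thm:invertibility-characterisation} would only deliver invertibility one level below $G$, which is insufficient to reach birthday $1$ whenever $b\geq2$.
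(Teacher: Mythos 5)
Your proof is correct and takes essentially the same route as the paper's: extract a non-zero invertible element, pass to its $\mathcal{U}$-simplest form, use \cref{thm:invertibility-characterisation} to descend to an invertible subposition of formal birthday $1$, and then take $n$ copies of it. You merely spell out two points the paper leaves implicit---that simplest-form status is hereditary, so the characterisation can be iterated all the way down to birthday $1$, and that the $n$-fold sum should be taken of the birthday-$1$ subposition rather than of $G$ itself (the paper's own write-up appears to contain a slip on this last point).
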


\begin{proof}
    By hypothesis, there exists some non-zero $G\in\mathcal{U}$ such that $G$
    is $\mathcal{U}$-invertible. Since it is non-zero, its
    $\mathcal{U}$-simplest form must have formal birthday at least 1, and this
    form must be $\mathcal{U}$-invertible (because $G$ is). Thus, by the
    characterisation of $\mathcal{U}$-invertibility
    (\cref{thm:invertibility-characterisation}), it follows that there must
    exist a $\mathcal{U}$-invertible subposition $G'\in\hat{\mathcal{U}}$ of
    the $\mathcal{U}$-simplest form of $G$ of formal birthday 1. We then
    observe that $n\cdot G\in\hat{\mathcal{U}}$ is a $\mathcal{U}$-invertible
    game of birthday $n$, yielding the result.
\end{proof}

For a universe $\mathcal{U}$ that is not reduced, we are \emph{not} claiming
that $\mathcal{U}$ contains a $\mathcal{U}$-invertible element of formal
birthday 1; we have proved that $\mathcal{U}$ contains a
$\mathcal{U}$-invertible element whose \emph{$\mathcal{U}$-simplest form} has
formal birthday 1. But the present authors know of no counter-example to the
stronger assertion, which we pose as an open problem.

\begin{problem}
    \label{prob:rank1}
    Does there exist a universe $\mathcal{U}$ that is not reduced, such that
    there exists no $\mathcal{U}$-invertible $G\in\mathcal{U}$ of formal
    birthday 1? Of formal birthday less than or equal to $n$?
\end{problem}

As a result of \cref{cor:invertible-birthdays}, to characterise those universes
that are reduced monoids, we can look at all of the augmented forms born by day
1 and try to characterise which universes they are invertible in. The 16
augmented forms born by day 1 are as follows:

\begin{multicols}{3}
    \begin{itemize}
        \item
            $0\coloneq\{\cdot\mid\cdot\}$;
        \item
            $\{\cdot\mid0\}$;
        \item
            $\{\cdot\mid\tomb\}$;
        \item
            $\{\cdot\mid0,\tomb\}$;
        \item
            $1\coloneq\{0\mid\cdot\}$;
        \item
            $*\coloneq\{0\mid0\}$;
        \item
            $\{0\mid\tomb\}$;
        \item
            $\{0\mid0,\tomb\}$;
        \item
            $\{\tomb\mid\cdot\}$;
        \item
            $\{\tomb\mid0\}$;
        \item
            $\{\tomb\mid\tomb\}$;
        \item
            $\{\tomb\mid0,\tomb\}$;
        \item
            $\{\tomb,0\mid\cdot\}$;
        \item
            $\{\tomb,0\mid0\}$;
        \item
            $\{\tomb,0\mid\tomb\}$;
        \item
            $\{\tomb,0\mid0,\tomb\}$.
        \item[]
        \item[]
    \end{itemize}
\end{multicols}

By considering symmetry and reductions (see \cite[Definitions 5.15 \& 5.17 and
Lemma 5.16 on pp.~26--27]{siegel:on}), we need only consider the following 6
forms.

\begin{multicols}{3}
    \begin{itemize}
        \item
            $0$;
        \item
            $1$;
        \item
            $*$;
        \item
            $\{\tomb,0\mid\cdot\}$;
        \item
            $\{\tomb,0\mid0\}$;
        \item
            $\{\tomb,0\mid0,\tomb\}$.
    \end{itemize}
\end{multicols}

It is trivial that 0 is $\mathcal{U}$-invertible for all universes
$\mathcal{U}$ (\cref{prop:0-invertible}). It is useful to note that a game $G$
of formal birthday 1 is $\mathcal{U}$-invertible if and only if
$G+\overline{G}$ is Left $\mathcal{U}$-strong, which is just a corollary of
\cref{thm:invertibility-characterisation}.

\begin{corollary}
    \label{cor:birth-1-invert}
    If $\mathcal{U}$ is a universe and $G\in\hat{\mathcal{U}}$ has formal
    birthday 1, then $G$ is $\mathcal{U}$-invertible if and only if
    $G+\overline{G}$ is Left $\mathcal{U}$-strong.
\end{corollary}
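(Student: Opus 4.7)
My plan is to derive this as an immediate specialisation of \cref{thm:invertibility-characterisation}. That theorem characterises a game $G\in\hat{\mathcal{U}}$ as $\mathcal{U}$-invertible precisely when $G+\overline{G}$ is Left $\mathcal{U}$-strong and every ordinary option of the $\mathcal{U}$-simplest form of $G$ is itself $\mathcal{U}$-invertible. To recover the corollary it therefore suffices to show that the second of these conditions is automatic whenever $G$ has formal birthday 1.

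For such a $G$, every ordinary option is an augmented form of formal birthday 0, which is necessarily $0$ itself. Such an option has no Right options at all, so it can never be $\mathcal{U}$-reversible: the definition of a reversible Left option requires the existence of some witnessing $G^{LR}$, and here none exists (and symmetrically on the other side). The remaining simplification steps (removing $\mathcal{U}$-dominated options and removing unnecessary tombstones) only delete, never create. Hence the $\mathcal{U}$-simplest form of $G$ has every ordinary option equal to $0$. By \cref{prop:0-invertible}, $0$ is $\mathcal{U}$-invertible, so the second condition of \cref{thm:invertibility-characterisation} is satisfied either trivially or vacuously.

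There is no real obstacle here; the corollary is genuinely just \cref{thm:invertibility-characterisation} read off at birthday 1, and the single supporting observation is that the only augmented form of formal birthday 0 is $0$, which cannot be reversed through anything. Once that is in hand, the biconditional of the corollary follows at once.
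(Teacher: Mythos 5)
Your proposal is correct and follows essentially the same route as the paper: both reduce the corollary to \cref{thm:invertibility-characterisation} and observe that the only possible ordinary option of the $\mathcal{U}$-simplest form of a birthday-1 game is $0$, which is $\mathcal{U}$-invertible by \cref{prop:0-invertible}. Your additional remarks about reversibility and the reductions not creating new options are a harmless elaboration of the same observation.
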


\begin{proof}
    By \cref{thm:invertibility-characterisation}, it suffices to show that
    every option of the $\mathcal{U}$-simplest form of $G$ is
    $\mathcal{U}$-invertible. But $G$ has formal birthday 1, and hence 0 is the
    only possible option of the $\mathcal{U}$-simplest form of $G$, which is
    trivially $\mathcal{U}$-invertible (\cref{prop:0-invertible}).
\end{proof}

It is tempting to think that we should be able to weaken the hypothesis of
\cref{cor:birth-1-invert} to let $\mathcal{U}$ be a monoid that might
\emph{not} be a universe, but this is unclear. Compare
\cref{cor:birth-1-invert} with the following result that we can indeed prove;
in particular, if the monoid has the conjugate property, then we would obtain
essentially the same conclusion.

\begin{proposition}
    \label{prop:birth1}
    If $\mathcal{A}$ is a hereditary set of games and $G\in\maug$ has formal
    birthday 1, then: $G+\overline{G}\equiv_\mathcal{A}0$ if and only if
    $G+\overline{G}$ is Left $\mathcal{A}$-strong.
\end{proposition}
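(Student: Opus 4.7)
The forward direction is immediate: if $G+\overline{G}\equiv_\mathcal{A}0$, then $G+\overline{G}\geq_\mathcal{A}0$, and since $0$ is a Left end (hence Left end-like), \cref{prop:comp-implies} gives that $G+\overline{G}$ is Left $\mathcal{A}$-strong.

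For the converse, assume $G+\overline{G}$ is Left $\mathcal{A}$-strong. My plan is to derive both $G+\overline{G}\geq_\mathcal{A}0$ and $0\geq_\mathcal{A}G+\overline{G}$ via \cref{prop:maint+strong-implies}. The maintenance conditions of that proposition collapse almost mechanically under the birthday-$1$ hypothesis on $G$: every ordinary option of $G$ is $0$, so each ordinary Right option of $G+\overline{G}$ is isomorphic to a copy of $G$ (namely $G+\overline{G}^R$ when $G$ has an ordinary Left option) or of $\overline{G}$ (namely $G^R+\overline{G}$ when $G$ has an ordinary Right option). In each case, the opposing player has an ordinary move directly back to $0$ via the matching $0$-option on the untouched summand, and the same case analysis works symmetrically for the Left options of $G+\overline{G}$.

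The provisos split into easy and delicate pieces. The facts that $0$ is both Left and Right $\mathcal{A}$-strong are trivial, since $0+Z=Z$ for every end $Z\in\mathcal{A}$, and $G+\overline{G}$ being Left $\mathcal{A}$-strong is our standing hypothesis. The main obstacle is the remaining proviso — needed for the $0\geq_\mathcal{A}G+\overline{G}$ direction — that $G+\overline{G}$ must also be Right $\mathcal{A}$-strong. Because $\mathcal{A}$ is only assumed hereditary (and in particular is not closed under conjugation), there is no one-line symmetry argument available. I plan to address this by a direct induction on the formal birthday of a Right end $Y\in\mathcal{A}$: Right's only genuine moves from $G+\overline{G}+Y$ are into the $G+\overline{G}$ summand, landing at some $Z+Y$ with $Z\in\{G,\overline{G}\}$, from which Left's replies are either to pass through $Z$'s ordinary $0$-option (yielding $0+Y=Y$, already won for Right as $Y$ is a Right end with $\outcomeR(Y)=\mathscr{R}$) or to move inside $Y$ to some $Y^L\in\mathcal{A}$ of smaller formal birthday, where the inductive hypothesis combined with the parallel structural analysis for $\overline{Z}$ closes the case. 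Piecing this together with the maintenance check, \cref{prop:maint+strong-implies} then delivers both inequalities, and hence $G+\overline{G}\equiv_\mathcal{A}0$.
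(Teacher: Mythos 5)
Your forward direction and your collapse of the maintenance conditions are correct, and they match what the paper's one-line proof (``this follows immediately from \cref{prop:comp-implies,prop:maint+strong-implies}'') is implicitly doing. You have also put your finger on the one genuinely non-trivial point, which the paper's proof passes over in silence: to extract $0\geq_\mathcal{A}G+\overline{G}$ from \cref{prop:maint+strong-implies} one needs $G+\overline{G}$ to be \emph{Right} $\mathcal{A}$-strong, and a merely hereditary $\mathcal{A}$ offers no conjugation symmetry to supply this. Unfortunately your proposed repair does not close the gap. In your induction on the formal birthday of a Right end $Y$, after Right moves to $Z+Y$ with $Z\in\{G,\overline{G}\}$ and Left replies inside $Y$, the resulting position is $Z+Y^L$ --- a \emph{single} copy of $G$ or $\overline{G}$ plus a smaller member of $\mathcal{A}$ --- and the inductive hypothesis, which concerns $G+\overline{G}+Y'$, says nothing about it. The ``parallel structural analysis for $\overline{Z}$'' you invoke is exactly the information that is unavailable, since the hypothesis only controls $G+\overline{G}$ against the \emph{Left} ends of $\mathcal{A}$.

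No argument can succeed here, because the converse is false for merely hereditary $\mathcal{A}$. Take $\mathcal{A}=\{0,1,*\}$ (hereditary, as every option of $1$ and of $*$ is $0$) and $G=*$, so $G+\overline{G}\cong*+*$. The only Left end in $\mathcal{A}$ is $0$, and $\outcomeL(*+*)=\mathscr{L}$, so $*+*$ is Left $\mathcal{A}$-strong; but $\outcomeR(*+*+1)=\mathscr{L}$ (Right's only move is to $*+1$, from which Left wins by moving to $*$), whereas $\outcomeR(0+1)=\mathscr{R}$, so $*+*\not\equiv_\mathcal{A}0$. The statement is correct --- and your proof then goes through essentially verbatim --- once $\mathcal{A}$ is additionally assumed conjugate closed: for the symmetric form $K\cong\overline{K}\cong G+\overline{G}$ and a Right end $Y\in\mathcal{A}$, the game $\overline{Y}\in\mathcal{A}$ is a Left end, and Right wins moving first on $K+Y\cong\overline{K+\overline{Y}}$ precisely because Left wins moving first on $K+\overline{Y}$; this is the one-line symmetry argument you correctly observed was missing. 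Every application of the proposition in the paper is to a universe, which is conjugate closed, so nothing downstream is affected; but as stated the hypothesis is too weak, and your instinct that something genuinely needed proving at this step was sound.
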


\begin{proof}
    This follows immediately from
    \cref{prop:comp-implies,prop:maint+strong-implies}.
\end{proof}

We now begin to investigate each of the five augmented forms born on day 1 (up
to conjugation and reduction), attempting to determine which universes they are
invertible in.

\subsection{Disintegrators}

We begin with the game $1$. By \cref{cor:birth-1-invert}, we know that $1$ is
$\mathcal{U}$-invertible if and only if $1\in\hat{\mathcal{U}}$ and
$1+\overline{1}$ is Left $\mathcal{U}$-strong. Being Left $\mathcal{U}$-strong
means that Left wins going first on $1+\overline{1}+X$ for all Left ends $X$ in
$\mathcal{U}$. This must be equivalent to Left winning going second on
$\overline{1}+X$. We give a definition to help characterise those $X$ such that
Right will win going first on $\overline{1}+X$.

\begin{definition}
    \label{def:disintegrator}
    If $G\in\mathcal{M}$, then we say that $G$ is a
    \emph{disintegrator}\footnote{The words \emph{disintegrator} and
    \emph{integer} both share the Proto-Indo-European root \emph{*tag-}.} if
    there exists a Right option $G^R$ of $G$ that is not a Left end such that,
    for all Left options $G^{RL}$ of $G^R$, either
    \begin{enumerate}
        \item
            $\outcomeL\left(G^{RL}\right)=\mathscr{R}$; or
        \item
            $G^{RL}$ is a disintegrator.
    \end{enumerate}
\end{definition}

For example, any game with a Right option of the form $\{*\mid G^R\}$ must be a
disintegrator (where $G^R$ can be arbitrary).

The following proposition is the reason disintegrators were defined in this
way, and the reader can likely easily convince themselves of its correctness.
The proof is tedious and just a straightforward application of the definition,
so we have relegated it to \cref{app:disintegrator}.

\begin{proposition}
    \label{prop:disintegrator}
    If $\mathcal{U}$ is a universe and $1\in\hat{\mathcal{U}}$, then 1 is
    $\mathcal{U}$-invertible if and only if $\mathcal{U}$ contains no Left end
    that is a disintegrator.
\end{proposition}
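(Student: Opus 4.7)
The plan is to invoke \cref{cor:birth-1-invert} to reduce the claim to an outcome statement about Left ends of $\mathcal{U}$, and then to decode that statement into the local property of \cref{def:disintegrator}. By \cref{cor:birth-1-invert}, $1$ is $\mathcal{U}$-invertible if and only if $\outcomeL(1+\overline{1}+X)=\mathscr{L}$ for every Left end $X\in\mathcal{U}$. On $1+\overline{1}+X$ Left has exactly one move, namely $1\to0$, since $\overline{1}$ has no Left options and $X$ is a Left end; so the condition becomes that Right loses moving first on $\overline{1}+X$ for every such $X$. Negating, $1$ fails to be $\mathcal{U}$-invertible exactly when there exists a Left end $X\in\mathcal{U}$ such that Right wins moving first on $\overline{1}+X$.

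For the remainder of the argument I would introduce, for each short game $G$, the auxiliary property $P(G)$ meaning ``Right wins moving first on $\overline{1}+G$''. Right's options on $\overline{1}+G$ are either to collapse $\overline{1}$ to $0$, leaving $G$ with Left to move (a Right win precisely when $\outcomeL(G)=\mathscr{R}$), or to play some $G^R$, leaving $\overline{1}+G^R$ with Left to move. Since $\overline{1}$ has no Left options, the second branch is a Right win if and only if $G^R$ is not a Left end (otherwise Left wins by mis\`ere) and each response $G^{RL}$ puts Right in the same situation on $G^{RL}$; that is,
\[
    P(G) \iff \outcomeL(G)=\mathscr{R}\ \text{or}\ \bigl(\exists G^R\text{ not a Left end}\bigr)\bigl(\forall G^{RL}\bigr)\,P(G^{RL}).
\]

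A straightforward induction on the formal birthday of $G$ then yields that $P(G)$ holds if and only if $\outcomeL(G)=\mathscr{R}$ or $G$ is a disintegrator, simply by matching the recursion against \cref{def:disintegrator}. Specialising to a Left end $X\in\mathcal{U}$, where $\outcomeL(X)=\mathscr{L}$, the disjunction collapses: $P(X)$ holds if and only if $X$ is a disintegrator. Combining with the opening reduction gives the proposition. The main obstacle is less any single non-trivial inequality than the bookkeeping of the induction: one must see that the ``$\outcomeL(G^{RL})=\mathscr{R}$'' clause of \cref{def:disintegrator} corresponds to Right winning at $\overline{1}+G^{RL}$ by cashing in the $\overline{1}$, whereas the ``$G^{RL}$ is a disintegrator'' clause corresponds to Right needing to threaten one level deeper. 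Hereditary closure of $\mathcal{U}$ keeps every intermediate subposition in $\mathcal{U}$, so no other closure axioms enter.
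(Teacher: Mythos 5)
Your proof is correct and follows essentially the same route as the paper: the same reduction via \cref{cor:birth-1-invert} to the statement that Right loses moving first on $\overline{1}+X$ for every Left end $X\in\mathcal{U}$, followed by the same induction matching the game tree of $\overline{1}+X$ against \cref{def:disintegrator}. The only difference is cosmetic: you package both directions as a single biconditional recursion $P(G)$, whereas the paper proves the two implications as separate lemmas (\cref{lem:disintegrator,lem:not-disintegrator}).
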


A dream result would be one which says adding two Left ends that are not
disintegrators cannot result in a disintegrator. This is because universes are
characterised by their Left ends (see \cite[Proposition 2.3 on
p.~6]{davies:on}), and we would thus be able to look at a generating set of the
Left ends in a universe and determine whether 1 is invertible. But we must wake
ourselves from this dream, for it is not true. As an example, consider the Left
end $G=\{\cdot\mid\{1\mid0,\overline{1}\}+\{\overline{1},1\mid\cdot\}\}$, where
$G$ is not a disintegrator, but $G+G$ is (which the reader may readily check
via the definition). We will see this example again soon.

\subsection{Starkillers}

We now turn our attention to $*$, which the reader might have guessed from the
title of this subsection. In the same way that we just investigated 1, we give
here a definition to help characterise those Left ends $X$ such that Right wins
going first on $*+X$.

\begin{definition}
    \label{def:starkiller}
    If $G\in\mathcal{M}$, then we say that $G$ is a \textit{starkiller} if
    there exists a Right option $G^R$ of $G$ such that
    $\outcome^R\left(G^R\right)=\mathscr{R}$ and, for all Left options $G^{RL}$
    of $G^R$, either
    \begin{enumerate}
        \item
            $\outcome^L\left(G^{RL}\right)=\mathscr{R}$, or
        \item
            $G^{RL}$ is a starkiller.
    \end{enumerate}
\end{definition}

Much like \cref{prop:disintegrator}, we defined a starkiller precisely to get
the following proposition. Its proof is similarly tedious and straightforward,
so we have relegated it to \cref{app:starkillers}.

\begin{proposition}
    \label{prop:starkiller}
    If $\mathcal{U}$ is a universe, then $*$ is $\mathcal{U}$-invertible if and
    only if $\mathcal{U}$ contains no Left end that is a starkiller.
\end{proposition}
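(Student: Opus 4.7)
The plan is to use \cref{cor:birth-1-invert} to reduce $\mathcal{U}$-invertibility of $*$ to a concrete outcome condition on $*+X$ for Left ends $X\in\mathcal{U}$, and then to recognise \cref{def:starkiller} by unfolding the game tree. Since $\overline{*}\cong*$, \cref{cor:birth-1-invert} says $*$ is $\mathcal{U}$-invertible exactly when $\outcomeL(*+*+X)=\mathscr{L}$ for every Left end $X\in\mathcal{U}$. Because $X$ has no Left options, the only Left options of $*+*+X$ are copies of $*+X$, so this is equivalent to $\outcomeR(*+X)=\mathscr{L}$ (i.e.\ $\outcomeR(*+X)\neq\mathscr{R}$) for every such $X$. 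Hence it suffices to characterise those Left ends $X$ for which Right wins going first on $*+X$.

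The crux is the following auxiliary claim, which I would prove by induction on the formal birthday of $Y$: for every $Y\in\mathcal{M}$,
\[
    \outcomeR(*+Y)=\mathscr{R} \iff \outcomeL(Y)=\mathscr{R} \text{ or } Y \text{ is a starkiller.}
\]
For the forward direction, Right's winning first move on $*+Y$ is either in $*$---producing $Y$ with Left to move, so $\outcomeL(Y)=\mathscr{R}$---or into some $*+Y^R$ with $\outcomeL(*+Y^R)=\mathscr{R}$. In the latter case, Left's move in $*$ forces $\outcomeR(Y^R)=\mathscr{R}$, while Left's moves in $Y^R$ give $\outcomeR(*+Y^{RL})=\mathscr{R}$ for every $Y^{RL}$; the inductive hypothesis then tells us each $Y^{RL}$ is either a starkiller or satisfies $\outcomeL(Y^{RL})=\mathscr{R}$, matching the recursive clause of \cref{def:starkiller} exactly, so this $Y^R$ witnesses $Y$ as a starkiller. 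The reverse direction hands Right the symmetric strategy: if $\outcomeL(Y)=\mathscr{R}$, Right wins by collapsing the $*$; if instead $Y$ is a starkiller with witness $Y^R$, Right moves to $*+Y^R$, and then Left's options (either to $Y^R$ or to $*+Y^{RL}$) are all losing---the first because $\outcomeR(Y^R)=\mathscr{R}$, the second by the inductive hypothesis applied through the starkiller clause.

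Specialising the lemma to $Y=X$ a Left end, one has $\outcomeL(X)=\mathscr{L}\neq\mathscr{R}$, so the disjunction collapses to ``$X$ is a starkiller''. Combined with the reduction above, this gives that $*$ is $\mathcal{U}$-invertible if and only if no Left end of $\mathcal{U}$ is a starkiller, as required. The main obstacle here is really only bookkeeping: keeping the alternation of $\outcomeL$ and $\outcomeR$ straight through the two-step recursion in the lemma, and being careful that the recursive call lands on a position of strictly smaller formal birthday (which it does, as $\birth(Y^{RL})<\birth(Y)$). The arguments are otherwise mechanical, which is why the authors sensibly relegate the full write-up to \cref{app:starkillers}.
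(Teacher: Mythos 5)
Your proposal is correct and follows essentially the same route as the paper: reduce via \cref{cor:birth-1-invert} to the condition $\outcomeR(*+X)=\mathscr{L}$ for all Left ends $X\in\mathcal{U}$, then match Right's and Left's strategies on $*+Y$ to the recursive clauses of \cref{def:starkiller} by induction on birthday. The only difference is presentational: you fold the paper's two one-directional lemmas (\cref{lem:starkiller,lem:not-starkiller}) into a single biconditional induction, which is equivalent in content.
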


Just as for disintegrators, our dream of having Left ends that aren't
starkillers being closed under addition is quickly shattered. Take, for
example, the Left end
$G=\{\cdot\mid\{1\mid0,\overline{1}\}+\{\overline{1},1\mid\cdot\}\}$, where $G$
is not a starkiller, but $G+G$ is. This is, in fact, the same example we gave
in the previous subsection.

As an example application of \cref{prop:starkiller}, it is a trivial
observation that a terminable Left end must be a starkiller; i.e.\ a Left end
where Right has an option to 0. Since $\overline{1}$ is a terminable Left end,
it is then a simple corollary that, if $\mathcal{U}$ is a dead-ending universe,
then $*$ is $\mathcal{U}$-invertible if and only if $\mathcal{U}=\mathcal{D}$.

\begin{corollary}
    If $\mathcal{U}$ is a dead-ending universe, then, $*$ is
    $\mathcal{U}$-invertible if and only if $\mathcal{U}=\mathcal{D}$. That is,
    the dicot universe is the only dead-ending universe where $*$ is
    invertible.
\end{corollary}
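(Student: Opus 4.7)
The plan is to apply \cref{prop:starkiller}, which reduces the claim to showing that a dead-ending universe $\mathcal{U}$ contains a Left end that is a starkiller if and only if $\mathcal{U} \neq \mathcal{D}$. The easier direction is $(\Leftarrow)$: the only Left end in $\mathcal{D}$ is $0$, because any non-zero dicot has at least one Left option; and since the definition of starkiller requires the existence of a Right option, $0$ is not a starkiller. Hence $\mathcal{D}$ contains no starkiller Left end and $*$ is $\mathcal{D}$-invertible.

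For the other direction, I would establish that any dead-ending universe $\mathcal{U}$ strictly containing $\mathcal{D}$ already contains $\overline{1}$. Since $\mathcal{D}$ lies inside every universe (by dicotic closure, starting from $0$), the hypothesis $\mathcal{U} \neq \mathcal{D}$ provides some $G \in \mathcal{U} \setminus \mathcal{D}$. Pick a subposition $H$ of $G$ of minimal formal birthday that is not a dicot; then $H \in \mathcal{U}$, all options of $H$ are dicots (by minimality of the birthday), and $H$ itself must be a one-sided end (if both players had options at $H$ then, with all options dicotic, $H$ would be a dicot). By conjugate closure we may assume $H$ is a Left end. The dead-ending hypothesis then forces each Right option of $H$ to be a Left end as well, and the only dicot that is a Left end is $0$; hence every Right option of $H$ equals $0$ and $H \cong \overline{1}$.

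Finally, $\overline{1}$ is a terminable Left end: it has a Right option to $0$, with $\outcomeR(0) = \mathscr{R}$, and $0$ has no Left options so the universal clause of \cref{def:starkiller} is vacuous. So $\overline{1}$ is a starkiller, as noted in the discussion preceding the corollary, and \cref{prop:starkiller} yields that $*$ is not $\mathcal{U}$-invertible. I expect the bulk of the work to lie in the minimality argument forcing $\overline{1}$ into $\mathcal{U}$, where one must marry the dead-ending hypothesis (which propagates the Left-end property under Right's moves) with the dicotic nature of the options of a minimal non-dicot subposition; once $\overline{1}$ has been located, everything else is routine manipulation of the definitions.
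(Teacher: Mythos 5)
Your proof is correct and follows essentially the same route as the paper: both directions reduce to \cref{prop:starkiller}, with the key point being that $\overline{1}$ is a starkiller forced into any dead-ending universe other than $\mathcal{D}$. You merely fill in details the paper leaves as ``clear'' (the minimality argument locating $\overline{1}$, and deriving the $\mathcal{D}$ direction from the absence of non-zero Left ends rather than citing the known invertibility of $*$ in $\mathcal{D}$).
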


\begin{proof}
    If $\mathcal{U}=\mathcal{D}$, then we know already that $*$ is
    $\mathcal{U}$-invertible. So, assume now that $\mathcal{U}$ is a
    dead-ending universe not equal to $\mathcal{D}$. It is then clear that
    $\overline{1}$ \emph{must} be in $\mathcal{U}$. But $\overline{1}$ is a
    starkiller (we have $\outcomeL(*+*+\overline{1})=\mathscr{R}$), and hence
    $*$ is not $\mathcal{U}$-invertible.
\end{proof}

Immediately from \cref{prop:disintegrator,prop:starkiller}, we have a
characterisation of those universes that contain a $\mathcal{U}$-invertible
element of formal birthday 1, which we state now.

\begin{theorem}
    If $\mathcal{U}$ is a universe, then $\mathcal{U}$ admits a
    $\mathcal{U}$-invertible element (in $\mathcal{U}$) of formal birthday 1 if
    and only if at least one of the following is true:
    \begin{itemize}
        \item
            $\mathcal{U}$ admits no Left ends that are starkillers; or
        \item
            $1\in\mathcal{U}$ and $\mathcal{U}$ admits no Left ends that are
            disintegrators.
    \end{itemize}
\end{theorem}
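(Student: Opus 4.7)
The plan is to reduce the theorem to an enumeration of the games in $\mathcal{U}$ of formal birthday exactly $1$, and then invoke the characterisations already established for $1$ and $*$ via Propositions \ref{prop:disintegrator} and \ref{prop:starkiller}. A game $G \in \mathcal{U}$ has formal birthday $1$ precisely when its options are drawn from $\{0\}$ and at least one option exists; up to isomorphism, the only candidates are $1$, $\overline{1}$, and $*$. Unlike in the earlier enumeration of birthday-$1$ augmented forms, we do not have to consider tombstoned variants here, since the theorem asks for an invertible element sitting inside $\mathcal{U}$ itself.

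Before splitting into the two directions, I would record two very short observations. First, $* \in \mathcal{U}$ for every universe $\mathcal{U}$: applying dicotic closure to the non-empty finite subsets $\{0\}, \{0\} \subseteq \mathcal{U}$ yields $\{0 \mid 0\} = * \in \mathcal{U}$. Second, by conjugate closure, $1 \in \mathcal{U}$ if and only if $\overline{1} \in \mathcal{U}$, and (because conjugation is an involutive monoid anti-automorphism on $\mathcal{U}$) $1$ is $\mathcal{U}$-invertible if and only if $\overline{1}$ is. Thus, exhibiting a non-zero birthday-$1$ invertible in $\mathcal{U}$ is equivalent to either $*$ being $\mathcal{U}$-invertible or ($1 \in \mathcal{U}$ and $1$ being $\mathcal{U}$-invertible).

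For the backward direction, if $\mathcal{U}$ admits no Left end that is a starkiller, then Proposition \ref{prop:starkiller} gives that $*$ is $\mathcal{U}$-invertible, and since $* \in \mathcal{U}$ has formal birthday $1$ we are done. If instead $1 \in \mathcal{U}$ and $\mathcal{U}$ admits no Left end that is a disintegrator, then $1 \in \hat{\mathcal{U}}$, so Proposition \ref{prop:disintegrator} gives that $1$ is $\mathcal{U}$-invertible, yielding a suitable element.

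For the forward direction, suppose $G \in \mathcal{U}$ is $\mathcal{U}$-invertible with formal birthday $1$. By the enumeration above, $G \in \{1, \overline{1}, *\}$. If $G = *$, then Proposition \ref{prop:starkiller} forces $\mathcal{U}$ to contain no Left end that is a starkiller. Otherwise $G \in \{1, \overline{1}\}$, so in particular $1 \in \mathcal{U}$ and (by the conjugation observation) $1$ is $\mathcal{U}$-invertible, and then Proposition \ref{prop:disintegrator} forces $\mathcal{U}$ to contain no Left end that is a disintegrator. Since everything is a direct specialisation of prior results, there is no real obstacle here; the only point requiring any care is keeping track of the asymmetry between the two bullets (the extra hypothesis ``$1 \in \mathcal{U}$'' is genuine, whereas ``$* \in \mathcal{U}$'' is automatic), which the observations above make transparent.
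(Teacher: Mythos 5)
Your proposal is correct and matches the paper's intent exactly: the paper states this theorem as an immediate consequence of \cref{prop:disintegrator,prop:starkiller} and gives no further argument, and your writeup simply supplies the routine details (the enumeration $1,\overline{1},*$ of birthday-$1$ forms, $*\in\mathcal{U}$ by dicotic closure, and the conjugation symmetry between $1$ and $\overline{1}$). Nothing to change.
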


\subsection{Super starkillers}
\label{subsec:super-starkilers}

We now consider the game $\{\tomb,0\mid0\}$. Writing $G=\{\tomb,0\mid0\}$, it
is useful to note that
\begin{align*}
    G+\overline{G}&\cong\{G,\overline{G}\mid G,\overline{G}\}\\
                  &\equiv_\mathcal{M}\{G\mid\overline{G}\}.
\end{align*}
In the same way as the previous two subsections, we give here a definition to
help characterise those Left ends $X$ such that Right wins going first on
$\{0\mid0,\tomb\}+X$.

\begin{definition}
    \label{def:super-starkiller}
    If $G\in\mathcal{M}$, then we say that $G$ is a \textit{super starkiller}
    if there exists a Right option $G^R$ of $G$ that is not a Left end such
    that $\outcome^R\left(G^R\right)=\mathscr{R}$ and, for all Left options
    $G^{RL}$ of $G^R$, either
    \begin{enumerate}
        \item
            $\outcome^L\left(G^{RL}\right)=\mathscr{R}$, or
        \item
            $G^{RL}$ is a super starkiller.
    \end{enumerate}
\end{definition}

It should be noted that, from a simple comparison of
\cref{def:disintegrator,def:starkiller,def:super-starkiller}, a super
starkiller is necessarily both a distingrator and a starkiller. To give
explicit examples that illustrate the differences:
\begin{center}
    \begin{tabular}{c | c c c}
        & disintegator & starkiller & super starkiller\\
        \hline\\ [-2.5ex]
        $\{\cdot\mid\{*\mid0\}\}$ & \cmark & \xmark & \xmark \\
        $\overline{1}$ & \xmark & \cmark & \xmark\\
        $\{\cdot\mid\{*\mid\overline{1}\}\}$ & \cmark & \cmark & \cmark\\
    \end{tabular}
\end{center}

Just as for disintegrators and starkillers, the proof of the following
proposition is simply a tedious application of the definition, so we relegate
it to \cref{app:super-starkillers}.

\begin{proposition}
    \label{prop:super-starkiller}
    If $\mathcal{U}$ is a universe and $\{\tomb,0\mid0\}\in\hat{\mathcal{U}}$,
    then $\{\tomb,0\mid0\}$ is $\mathcal{U}$-invertible if and only if
    $\mathcal{U}$ contains no Left end that is a super starkiller.
\end{proposition}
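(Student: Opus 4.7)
My plan is to apply \cref{cor:birth-1-invert}: the hypothesis $\{\tomb,0\mid0\}\in\hat{\mathcal{U}}$ and the fact that $G\coloneq\{\tomb,0\mid0\}$ has formal birthday~$1$ mean that $G$ is $\mathcal{U}$-invertible iff $G+\overline{G}$ is Left $\mathcal{U}$-strong; equivalently, iff Left wins going first on $G+\overline{G}+X$ for every Left end $X\in\mathcal{U}$. Because tombstones are not ordinary options, Left has only two moves from $G+\overline{G}+X$, namely to $\overline{G}+X$ and to $G+X$ (both with Right to move), and so Left wins iff Right fails to win going first on at least one of these.

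The core of the argument is an induction on the game tree of $Y$ that characterises when Right wins $G+Y$ (respectively $\overline{G}+Y$) going first. As in the proofs of \cref{prop:disintegrator,prop:starkiller} this is a straightforward but tedious unwinding of the outcome recursion, which I would relegate to \cref{app:super-starkillers}. The two recursions obtained are:
\begin{itemize}
    \item Right wins $\overline{G}+Y$ going first iff $\outcomeL(Y)=\mathscr{R}$, or else there exists $Y^R$ with $\outcomeR(Y^R)=\mathscr{R}$ such that Right wins $\overline{G}+Y^{RL}$ going first for every $Y^{RL}$;
    \item Right wins $G+Y$ going first iff $\outcomeL(Y)=\mathscr{R}$, or else there exists $Y^R$ that is \emph{not} a Left end, with $\outcomeR(Y^R)=\mathscr{R}$, such that Right wins $G+Y^{RL}$ going first for every $Y^{RL}$.
\end{itemize}
These exactly match \cref{def:starkiller,def:super-starkiller}, respectively.

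The main obstacle -- and the key novelty compared with the starkiller case -- is justifying the ``not a Left end'' clause in the second recursion. It is forced by the Left tombstone on $G$: by hereditary closure any such $Y^R$ lies in $\mathcal{U}$, so if $Y^R$ were a Left end then $G+Y^R$ would be Left end-like (both summands being Left end-like) and would inherit a Left tombstone from $G$, making it Left $\mathcal{U}$-strong. Consequently $\outcomeL(G+Y^R)=\mathscr{L}$, Left wins after Right's move, and Right's line of play collapses. No analogous restriction appears in the $\overline{G}$ recursion because $\overline{G}$ carries only a Right tombstone, which does not interfere with $\outcomeL$ of the relevant intermediate positions.

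Specialising to a Left end $X\in\mathcal{U}$, where $\outcomeL(X)=\mathscr{L}$ kills the first disjunct of each bullet, the first bullet gives that Right wins $\overline{G}+X$ going first iff $X$ is a starkiller, and the second that Right wins $G+X$ going first iff $X$ is a super starkiller. Since every super starkiller is a starkiller, Right wins both sub-positions iff $X$ is a super starkiller; equivalently, Left loses $G+\overline{G}+X$ going first iff $X$ is a super starkiller. Hence $G$ is $\mathcal{U}$-invertible iff $\mathcal{U}$ contains no Left end that is a super starkiller, as required.
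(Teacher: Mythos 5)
Your overall route is essentially the paper's: reduce via \cref{cor:birth-1-invert} to deciding when $G+\overline{G}$ is Left $\mathcal{U}$-strong, and then unwind the outcome recursion for $G+X$ with Right to move, where the ``not a Left end'' clause is forced by the Left tombstone on $G$ exactly as you describe; that recursion is precisely the content of \cref{lem:super-starkiller,lem:not-super-starkiller}. The one structural difference is that the paper first records $G+\overline{G}\equiv_\mathcal{M}\{G\mid\overline{G}\}$ (Left's option to $\overline{G}$ is dominated by her option to $G$), so that only the branch $G+X$ ever needs analysing; you instead keep both of Left's moves and must also analyse $\overline{G}+X$.

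That extra branch is where your write-up goes wrong. Your first bullet ignores the Right tombstone on $\overline{G}=\{0\mid0,\tomb\}$: whenever $Y$ is a Right end, $\overline{G}+Y$ is Right end-like and so $\outcomeR(\overline{G}+Y)=\mathscr{R}$ immediately---a base case your recursion omits. (Note that $\overline{G}$ is \emph{not} the same as $*$ here, so \cref{lem:starkiller,lem:not-starkiller} cannot simply be quoted.) Consequently ``Right wins $\overline{G}+X$ going first iff $X$ is a starkiller'' is false: for $X=0$, or for $X=\{\cdot\mid\{0\mid\cdot\}\}$, Right wins $\overline{G}+X$ going first even though $X$ is not a starkiller. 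Your final conclusion survives only because the argument uses just the implication ``$X$ a starkiller $\Rightarrow$ Right wins $\overline{G}+X$ going first'', which is the direction that does hold; the converse, which you assert, does not. To repair this, either add the missing Right-end disjunct and note that only one direction of the resulting equivalence is needed, or---more cleanly---dominate away Left's move to $\overline{G}+X$ as the paper does, so that the $\overline{G}$ branch never arises.
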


To complete the triad of disappointments, consider the Left end
$G=\{\cdot\mid\{1\mid0,\overline{1}\}+\{\overline{1},1\mid\cdot\}\}$, where $G$
is not a super starkiller, but $G+G$ is. Once again, this is the same example
we gave before (a powerful counter-example, indeed, but there exist others,
which the reader is more than welcome to find).

\subsection{The remaining forms}

The augmented forms left to consider are $\{\tomb,0\mid\cdot\}$ and
$\{\tomb,0\mid0,\tomb\}$. Consider first $G=\{\tomb,0\mid\cdot\}$. We calculate
$G+\overline{G}\cong\{\tomb,\overline{G}\mid G,\tomb\}$. Since this is Left
$\mathcal{U}$-strong for all universes $\mathcal{U}$, it is clear from
\cref{prop:birth1} that $G+\overline{G}\equiv_\mathcal{U}0$ for all universes
$\mathcal{U}$. That is, if $G\in\hat{\mathcal{U}}$, then $\mathcal{U}$ admits
an invertible element.

Now consider $G=\{\tomb,0\mid0,\tomb\}$. We calculate
$G+\overline{G}\cong\{\tomb,G\mid G,\tomb\}$. Since this is Left
$\mathcal{U}$-strong for all universes $\mathcal{U}$, we have again by
\cref{prop:birth1} that $G+\overline{G}\equiv_\mathcal{U}0$ for all universes
$\mathcal{U}$. That is, if $G\in\hat{\mathcal{U}}$, then $\mathcal{U}$ admits
an invertible element.

It is here that we must interrupt the narrative to assuage the concerns of our
alarmed readers, for what we have just said may appear strange. We have found
that, for all universes $\mathcal{U}$,
\begin{gather*}
    \{\tomb,0\mid\cdot\}+\{\cdot\mid0,\tomb\}\equiv_\mathcal{U}0,\text{ and}\\
    2\cdot\{\tomb,0\mid0,\tomb\}\equiv_\mathcal{U}0.
\end{gather*}
The informed reader will recall that the full mis\`ere universe $\mathcal{M}$
is reduced, meaning that it has no invertible elements. That is, there exists
no $\mathcal{M}$-invertible $G\in\mathcal{M}$. And therein lies the antidote:
the $\mathcal{M}$-equivalence classes of the games $\{\tomb,0\mid\cdot\}$ and
$\{\tomb,0\mid0,\tomb\}$ don't contain any games in $\mathcal{M}$! They do not
have $\mathcal{M}$-expansions, or, equivalently here, they are not the simplest
forms of any games in $\mathcal{M}$. So, we are \emph{not} saying that these
two games $\{\tomb,0\mid\cdot\}$ and $\{\tomb,0\mid0,\tomb\}$ are
$\mathcal{U}$-invertible for every universe $\mathcal{U}$, but just for those
universes where these games have a $\mathcal{U}$-expansion (equivalently here,
for those universes where these games are the $\mathcal{U}$-simplest forms of
some games in $\mathcal{U}$).

Collecting together
\cref{prop:disintegrator,prop:starkiller,prop:super-starkiller}, we have the
following characterisation of reduced universes. But it is important to note
that this result is not immediately practical. For example, how does one check
whether $\mathcal{U}$ contains a Left end that is a starkiller? How does one
check whether $\{\tomb,0\mid\cdot\}\in\hat{\mathcal{U}}$? These do not appear
to be easy problems to solve.

\begin{theorem}
    \label{thm:reduced}
    If $\mathcal{U}$ is a universe, then it is reduced if and only if the
    following statements are all true:
    \begin{enumerate}
        \item
            if $1\in\hat{\mathcal{U}}$, then there exists a Left end in
            $\mathcal{U}$ that is a disintegrator;
        \item
            there exists a Left end in $\mathcal{U}$ that is a starkiller;
        \item
            if $\{\tomb,0\mid0\}\in\hat{\mathcal{U}}$, then there exists a Left
            end in $\mathcal{U}$ that is a super starkiller;
        \item
            $\{\tomb,0\mid\cdot\}\not\in\hat{\mathcal{U}}$; and
        \item
            $\{\tomb,0\mid0,\tomb\}\not\in\hat{\mathcal{U}}$.
    \end{enumerate}
\end{theorem}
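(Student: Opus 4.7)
The plan is to deduce \cref{thm:reduced} directly from the characterisations of $\mathcal{U}$-invertibility already established for the five canonical non-zero augmented forms of formal birthday $1$, combined with \cref{cor:invertible-birthdays}, which reduces the general question of reducedness to that finite case analysis. Both directions of the equivalence follow this same skeleton: the forward direction is essentially a list of contrapositives, while the backward direction uses \cref{cor:invertible-birthdays} to extract a low-birthday witness whenever reducedness fails.

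For the forward direction, I would assume $\mathcal{U}$ is reduced and establish each of the five conditions by contraposition. If condition (1) failed, then $1\in\hat{\mathcal{U}}$ and \cref{prop:disintegrator} would give that $1$ is $\mathcal{U}$-invertible, contradicting reducedness (since $1\not\equiv_\mathcal{U}0$). Condition (2) is analogous: $*\in\hat{\mathcal{U}}$ always, since $*=\{0\mid0\}\in\mathcal{U}$ by dicotic closure applied to $0\in\mathcal{U}$, so failure of (2) together with \cref{prop:starkiller} would make $*$ a non-trivial $\mathcal{U}$-invertible element. Condition (3) follows the same way from \cref{prop:super-starkiller}. Finally, conditions (4) and (5) are immediate from the computations performed just before the theorem statement, which show via \cref{prop:birth1} that $\{\tomb,0\mid\cdot\}$ and $\{\tomb,0\mid0,\tomb\}$ are each $\mathcal{U}$-invertible whenever they lie in $\hat{\mathcal{U}}$.

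For the backward direction, I would argue by contradiction: assume all five conditions hold but $\mathcal{U}$ is not reduced. By \cref{cor:invertible-birthdays} applied with $n=1$, there exists a $\mathcal{U}$-invertible game $G\in\hat{\mathcal{U}}$ of formal birthday $1$, and the construction in that proof produces $G$ as a subposition of a $\mathcal{U}$-simplest form, so $G$ is itself in $\mathcal{U}$-simplest form. By the classification recalled in the text preceding the theorem, a non-zero augmented form of formal birthday $1$ in simplest form is, up to symmetry and reduction, one of $1$, $*$, $\{\tomb,0\mid\cdot\}$, $\{\tomb,0\mid0\}$, or $\{\tomb,0\mid0,\tomb\}$. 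Since $\hat{\mathcal{U}}$ is closed under conjugation (a consequence of conjugate closure of $\mathcal{U}$) and $\mathcal{U}$-invertibility is preserved under conjugation, we may assume $G$ itself is one of these five forms. Each case then contradicts exactly one of the five hypothesised conditions via the same propositions invoked in the forward direction.

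The only genuine obstacle is the bookkeeping in the backward direction: one must verify both that the witness produced by \cref{cor:invertible-birthdays} is in simplest form (so that the classification of birthday-$1$ simplest forms applies cleanly), and that passage to a conjugate preserves both $\hat{\mathcal{U}}$-membership and $\mathcal{U}$-invertibility. Once these observations are nailed down, the argument is entirely mechanical, since each of the five canonical representatives is handled by precisely one of the five listed conditions.
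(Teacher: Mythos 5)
Your proposal is correct and follows essentially the same route as the paper, which presents \cref{thm:reduced} as a direct collection of \cref{cor:invertible-birthdays}, the day-1 classification, \cref{prop:disintegrator,prop:starkiller,prop:super-starkiller}, and the preceding computations for $\{\tomb,0\mid\cdot\}$ and $\{\tomb,0\mid0,\tomb\}$, without writing out a separate proof. The bookkeeping points you flag (the birthday-1 witness inheriting simplest form as a subposition, and conjugation preserving $\hat{\mathcal{U}}$-membership and invertibility) are exactly the details the paper leaves implicit, and your handling of them is sound.
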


Earlier, in \cref{prob:rank1}, we asked whether there exists a universe
$\mathcal{U}$ that is not reduced such that there exists no
$\mathcal{U}$-invertible $G\in\mathcal{U}$ of formal birthday 1.
\cref{thm:reduced} might present a path to try and tackle such a question.

Recalling that the full mis\`ere universe $\mathcal{M}$ is reduced, it follows
clearly that it must be the unique universe that is maximal with respect to
being reduced. But what about universes that are \emph{minimal} with respect to
being reduced? We will see in the next section (in the discussion after
\cref{cor:universe-weak}) that $\mathcal{D}(\{\cdot\mid2\})$ is one such
universe (since its only subuniverses are $\mathcal{D}$ and
$\mathcal{D}(\overline{1})$, neither of which is reduced). We will also see
that there are uncountably many reduced universes.

\section{Weak universes}
\label{sec:weak}

In any given universe, it is always trivially the case that a Left end-like
form is Left $\mathcal{U}$-strong. But the converse is not always true: in our
favourite universes $\mathcal{D}$ and $\mathcal{E}$, it is trivial to find
games that are Left $\mathcal{D}$- and $\mathcal{E}$-strong (respectively) but
which are not Left ends. For the full mis\`ere universe $\mathcal{M}$, however,
being Left $\mathcal{M}$-strong is exactly equivalent to being Left end-like.
We will call a universe exhibiting this behaviour (like $\mathcal{M}$) a
\emph{weak} universe, since it has no more strong elements than absolutely
required.

\begin{definition}
    \label{def:weak}
    If $\mathcal{A}$ is a set of games such that $G\in\maug$ is Left
    $\mathcal{A}$-strong if and only if $G$ is Left end-like, then we say
    $\mathcal{A}$ is \emph{Left weak}. We define \emph{Right weak} analagously,
    and furthermore say that $\mathcal{A}$ is \emph{weak} if it is both Left
    and Right weak.
\end{definition}

Of course, a set of games that is conjugate closed is Left weak if and only if
it is Right weak; and so it is simply either weak or not.

As we have discussed, the universe $\mathcal{M}$ is weak---and we will give
more examples soon. But we first give a short proof for completeness and to
highlight this fact.

\begin{proposition}
    The universe $\mathcal{M}$ is weak.
\end{proposition}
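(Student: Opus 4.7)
The forward direction is immediate from the definitions and the misère convention. If $G \in \maug$ is Left end-like and $X \in \mathcal{M}$ is any Left end, then $G + X$ is still Left end-like---a Left tombstone in $G$ propagates through addition, and if $G$ is a Left end then $G + X$ is too---so Left to move has no ordinary move and wins, yielding $\outcomeL(G + X) = \mathscr{L}$.

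For the converse I would argue by contrapositive. Suppose $G \in \maug$ is not Left end-like, so $G$ has at least one ordinary Left option and carries no Left tombstone; the task is to exhibit a Left end $X \in \mathcal{M}$ with $\outcomeL(G + X) \neq \mathscr{L}$. The key advantage of working in $\mathcal{M}$ is its complete freedom: for every short game $Y$, the form $\{\cdot \mid Y\}$ is a Left end in $\mathcal{M}$, so we can design tailored witnesses without any restriction. I would construct $X$ by induction on the formal birthday of $G$. In the base case, where $G$ has birthday $1$ and is not Left end-like, the candidates are, up to reduction, the four forms $1$, $*$, $\{0 \mid \tomb\}$, and $\{0 \mid 0, \tomb\}$; in each case, the witness $X = 0$ already suffices, since Left's only move is to $0$, after which Right has no ordinary response and wins under misère, giving $\outcomeL(G) = \mathscr{R}$. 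For the inductive step I would take $X = \{\cdot \mid Y\}$ where $Y$ is built from the inductive witnesses for the non-end-like Left options of $G$: after Left is forced to play $G \to G^{L_i}$, Right traverses $X \to Y$, and the Left options of $Y$ are arranged so that whichever Left selects, Right can exploit the inductive witness corresponding to $G^{L_i}$.

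The main obstacle is the universality requirement: $X$ must be chosen before Left selects her move, so it must simultaneously defeat every Left option of $G$. I expect the delicate step to be weaving the per-option inductive witnesses into a single coherent Left end $Y$ without introducing Left moves in $Y$ that Left can exploit, and managing parity carefully so that Right does not inadvertently become the last mover. Handling the various mixed cases (ordinary options of $G$ that are themselves Left end-like, and those that are not) will likely require treating a few subcases in the recursion, but the freedom to add any $\{\cdot \mid Y\}$ to $\mathcal{M}$ means no genuine obstruction arises from the universe itself.
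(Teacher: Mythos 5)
Your forward direction is fine (and the paper treats it as immediate), but the converse as written is a plan rather than a proof, and the plan has a genuine gap exactly where you flag it. Your induction needs, for each ordinary Left option $G^{L}$ of $G$, a witness refuting Left $\mathcal{M}$-strength of $G^{L}$ --- but such a witness exists only when $G^{L}$ is itself not Left end-like; when $G^{L}$ \emph{is} Left end-like (a Left end, or carrying a Left tombstone), it is genuinely Left $\mathcal{M}$-strong and no Left end $X'$ satisfies $\outcomeL(G^{L}+X')\neq\mathscr{L}$, so the inductive hypothesis gives you nothing and Right's reply $X\to Y$ must win for a different reason (note $Y$ is not a Left end, so this is not a contradiction, but it is a separate argument you have not supplied). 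Beyond that, the ``weaving'' of per-option witnesses into a single $Y$ is not a routine bookkeeping step: after Right plays $X\to Y$, Left is free to move either in $Y$ or back in the $G$-component, so you must control both which witness gets activated and the parity of who moves last, and none of that is carried out. As it stands the proposal defers precisely the content of the proof.

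The paper avoids all of this with a single uniform witness: take $X=\{\cdot\mid\birth(G)\}$. Left, having no move in $X$, must play to some $G^{L}+X$; Right answers by moving $X$ to the integer $\birth(G)$. From then on Right can only ever move inside the $G$-component, which supports at most $\birth(G)-1$ further moves in total, while Left holds a reserve of $\birth(G)$ moves in the integer. So Right is starved of moves strictly before Left, and under the mis\`ere convention the player unable to move wins; hence $\outcomeL(G+X)\neq\mathscr{L}$ and $G$ is not Left $\mathcal{M}$-strong. The key idea your sketch is missing is this ``give Left a large stockpile so that Right runs out first'' device, which makes the tailored recursive construction unnecessary. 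If you want to salvage your approach you would need to supply the combination step explicitly, but the integer trick is both shorter and complete.
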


\begin{proof}
    Let $G\in\maug$ be an augmented form that is not Left end-like. Let
    $X=\{\cdot\mid\birth(G)\}$, and consider Left playing first on $G+X$. Since
    $G$ is not Left end-like, Left does not win immediately and so she must
    play to some $G^L+X$. Right can respond to $X+\birth(G)$, and by
    construction will clearly run out of moves before Left, hence winning the
    game. Thus, $G$ is not Left $\mathcal{M}$-strong.
\end{proof}

It is not shocking that all weak universes induce the same relation; in
particular, all weak universes induce the same partial order as $\mathcal{M}$
does (with $\geq_\mathcal{M}$).

\begin{proposition}
    \label{prop:weak-agree}
    If $\mathcal{U}$ and $\mathcal{W}$ are weak universes, then the relations
    $\geq_\mathcal{U}$ and $\geq_\mathcal{W}$ agree on $\maug$.
\end{proposition}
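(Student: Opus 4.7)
The plan is to use Siegel's comparison test (\cref{thm:comparison}) for both universes $\mathcal{U}$ and $\mathcal{W}$, and to observe that when each universe is weak the proviso becomes a purely combinatorial statement that has no reference to the ambient universe. We proceed by strong induction on $\birth(G)+\birth(H)$, showing for all $G,H\in\maug$ that $G\geq_\mathcal{U}H$ if and only if $G\geq_\mathcal{W}H$.

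For the inductive step, we show that conditions (a)--(d) of \cref{thm:comparison} for $\mathcal{U}$ hold if and only if the same conditions hold for $\mathcal{W}$. Conditions (a) and (b) involve only subcomparisons of the shapes $G^R\geq_\mathcal{U}H^R$, $G^{RL}\geq_\mathcal{U}H$, $G^L\geq_\mathcal{U}H^L$, and $G\geq_\mathcal{U}H^{LR}$; since $G^L,G^R,H^L,H^R$ all have strictly smaller formal birthday than $G$ or $H$ (and $G^{RL},H^{LR}$ drop two levels), each subcomparison occurs at strictly smaller $\birth$-sum, so by the inductive hypothesis each one agrees with its $\mathcal{W}$-counterpart. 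For conditions (c) and (d), the weakness of $\mathcal{U}$ means ``$G$ is Left $\mathcal{U}$-strong'' is equivalent to ``$G$ is Left end-like'' (and analogously on the Right), and identically for $\mathcal{W}$. So the provisos for $\mathcal{U}$ and $\mathcal{W}$ both reduce to the same combinatorial statements: if $H$ is Left end-like then $G$ is Left end-like, and if $G$ is Right end-like then $H$ is Right end-like.

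Combining these two observations, \cref{thm:comparison} applied to $\mathcal{U}$ characterises $G\geq_\mathcal{U}H$ by a list of conditions which coincides exactly with the characterisation of $G\geq_\mathcal{W}H$ obtained by applying \cref{thm:comparison} to $\mathcal{W}$. Hence $\geq_\mathcal{U}$ and $\geq_\mathcal{W}$ agree on all of $\maug$. There is no real obstacle: the only bookkeeping needed is confirming that every recursive subcomparison strictly decreases the induction measure, and the entire content of the proposition is packaged in the single observation that weakness collapses the universe-dependent part of the comparison test to a combinatorial condition on being end-like.
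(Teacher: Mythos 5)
Your proof is correct and takes essentially the same approach as the paper's, which likewise observes that the comparison test of \cref{thm:comparison} depends on the universe only through the proviso, and that weakness collapses the proviso to the purely combinatorial condition of being end-like. You simply make explicit the induction on formal birthdays that the paper leaves implicit.
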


\begin{proof}
    From \cref{thm:comparison}, we observe that two universes induce the same
    relation on $\maug$ if they have the same proviso. Since a game in $\maug$
    is Left $\mathcal{U}$-strong if and only if it is Left
    $\mathcal{W}$-strong, it is clear that $\geq_\mathcal{U}$ and
    $\geq_\mathcal{W}$ have the same proviso, and hence they agree on $\maug$;
    i.e.\ they are the same relation.
\end{proof}

It is also easy to see that every weak monoid of games is also a reduced
monoid. More precisely: every invertible element of a Left (Right) weak monoid
is a Left (Right) end.

\begin{proposition}
    \label{prop:weak-implies-red}
    If $\mathcal{A}$ is a Left (Right) weak monoid of games, then every element
    of the invertible subgroup of $\mathcal{A}$ is a Left (Right) end. In
    particular, if $\mathcal{A}$ is weak, then $\mathcal{A}$ is reduced.
\end{proposition}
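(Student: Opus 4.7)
The plan is to chain together Proposition~\ref{prop:comp-implies} with the Left-weak hypothesis, and then use the trivial fact that a sum of ordinary games is a Left end if and only if both summands are. So the proof should be quite short.

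First I would take an invertible $G \in \mathcal{A}$ together with an inverse $H \in \mathcal{A}$ satisfying $G + H \equiv_\mathcal{A} 0$; in particular $G + H \geq_\mathcal{A} 0$. Since $0$ is (vacuously) Left end-like, Proposition~\ref{prop:comp-implies} immediately yields that $G + H$ is Left $\mathcal{A}$-strong. The Left weak hypothesis now forces $G + H$ to be Left end-like; but $G, H \in \mathcal{A}$ are ordinary games (no tombstones), so $G + H$ literally has no Left options. Because the Left options of $G + H$ are exactly the forms $G^L + H$ and $G + H^L$, this is only possible when $G$ (and $H$) are themselves Left ends. The Right-weak case is symmetric, using $G + H \leq_\mathcal{A} 0$ and the Right half of Proposition~\ref{prop:comp-implies}.

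For the final sentence, I would combine the two halves: if $\mathcal{A}$ is weak, then any invertible $G \in \mathcal{A}$ is simultaneously a Left end and a Right end, and hence $G \cong \{\cdot \mid \cdot\} \cong 0$. Thus the invertible subgroup is trivial, i.e.\ $\mathcal{A}$ is reduced.

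I don't anticipate any real obstacle here; the only thing worth being a little careful about is noting that ``Left end-like'' coincides with ``Left end'' for elements of $\mathcal{A}$ (since $\mathcal{A}$ is a monoid of games, and so tombstones cannot appear), which is what lets the weak hypothesis bite.
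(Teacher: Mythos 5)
Your proposal is correct and follows essentially the same route as the paper's proof: apply \cref{prop:comp-implies} to $G+H\geq_\mathcal{A}0$ to get Left $\mathcal{A}$-strength, invoke Left weakness to conclude $G+H$ is Left end-like, note the absence of tombstones to upgrade this to ``Left end'', and finish by symmetry and the observation that a form which is both a Left and a Right end is $0$. No gaps.
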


\begin{proof}
    Let $G,H\in\mathcal{A}$ with $G+H\equiv_\mathcal{A}0$ and $\mathcal{A}$
    Left weak. By \cref{prop:comp-implies}, it follows that $G+H$ is Left
    $\mathcal{A}$-strong. Since $\mathcal{A}$ is Left weak, we must have that
    $G+H$ is Left end-like. But $G$ and $H$ have no tombstone options, so $G+H$
    is a Left end, which implies $G$ and $H$ are both Left ends. The result for
    $\mathcal{A}$ being Right weak follows by symmetry. Finally, if
    $\mathcal{A}$ is weak, then our $G$ and $H$ from above would necessarily be
    both Left ends and Right ends, which implies they are both isomorphic to 0,
    yielding the result.
\end{proof}

In particular, a weak universe is reduced, which we simply state now. We could
also have realised this from \cref{prop:weak-agree}; every weak universe
induces the same partial order as $\mathcal{M}$, which is reduced, and hence
every weak universe is reduced.

\begin{corollary}
    \label{cor:weak-implies-red}
    If $\mathcal{U}$ is a weak universe, then $\mathcal{U}$ is reduced.
\end{corollary}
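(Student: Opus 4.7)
The plan is to observe that this corollary is an almost immediate consequence of \cref{prop:weak-implies-red}, since every universe is a monoid of games by definition (additive closure and the presence of $0$ as the identity). So the first approach I would take is: let $\mathcal{U}$ be a weak universe, note that $\mathcal{U}$ is in particular a weak monoid of games, and then invoke \cref{prop:weak-implies-red} to conclude that $\mathcal{U}$ is reduced. There is essentially no obstacle here; the content lies entirely in the preceding proposition.

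As a sanity check, I would also verify the alternative route hinted at in the excerpt, which uses \cref{prop:weak-agree}. The argument runs as follows: since $\mathcal{M}$ is weak and $\mathcal{U}$ is weak, the relations $\geq_\mathcal{U}$ and $\geq_\mathcal{M}$ agree on $\maug$ by \cref{prop:weak-agree}. In particular, if $G,H\in\mathcal{U}$ witness $G+H\equiv_\mathcal{U}0$, then $G+H\equiv_\mathcal{M}0$ as well. Since $\mathcal{M}$ is reduced (by the theorem of Mesdal and Ottaway cited earlier), we must have $G\cong H\cong 0$, so $\mathcal{U}$ is reduced. This gives an independent check that does not pass through the more general monoid statement.

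Either approach takes only a few lines; the main work was already done in \cref{prop:weak-implies-red} (and, for the alternative, in \cref{prop:weak-agree}). Since the direct invocation of \cref{prop:weak-implies-red} is the cleanest, I would present only that argument in the body of the proof, perhaps with a one-sentence remark noting the alternative derivation via \cref{prop:weak-agree} for the reader's benefit. The only thing to be careful about is to make explicit that a universe is indeed a monoid (so that \cref{prop:weak-implies-red} applies); this is immediate from additive closure together with the fact that $0\in\mathcal{U}$, which follows from dicotic closure applied to any $G\in\mathcal{U}$ in the degenerate case, or simply by noting that universes are standardly taken to contain $0$.
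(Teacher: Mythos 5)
Your proposal is correct and matches the paper's approach exactly: the corollary is obtained as an immediate specialisation of \cref{prop:weak-implies-red} to the case where the monoid is a universe, and the paper likewise notes the same alternative derivation via \cref{prop:weak-agree} and the reducedness of $\mathcal{M}$. Nothing further is needed.
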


What is unclear, however, is whether the reverse implication holds. That is, if
we have a reduced universe, must it also be weak? We have found no such
examples, and leave it as an open problem.

\begin{problem}
    \label{prob:reduced-weak}
    Must a reduced universe be weak?
\end{problem}

The reader may or may not be surprised to learn that there exist games whose
mere presence in a a set of games forces the set to be Left weak (or Right
weak). We call such games \emph{Left weakening} (respectively \emph{Right
weakening}).

\begin{definition}
    If $X\in\mathcal{M}$ is a Left end (Right end) such that every set of
    games containing $X$ is Left weak (Right weak), then we say $X$ is
    \emph{Left weakening} (\emph{Right weakening}). If is both Left and Right
    weakening, then we simply say it is \emph{weakening}.
\end{definition}

Clearly a set of games that is conjugate closed contains a Left weakening end
if and only if it contains a Right weakening end, and so (in the context of
such a set of games) we need only discuss whether a Left end is weakening or
not.

What is not clear, however, is whether a weak universe necessarily contains a
weakening Left end. (Or, more generally, whether a Left weak monoid necessarily
contains a Left weakening Left end.) We leave this as an open problem.

\begin{problem}
    \label{prob:weak-weakening}
    Is it true that a universe is weak if and only if it contains a weakening
    Left end, or is it possible to have a weak universe that does not contain
    any weakening Left ends?
\end{problem}

For monoids of games, a simple example of a Left weakening end is the form
$\{\cdot\mid2\}$. A key property of this game is that Right can effectively
\textit{give back} more moves to Left than Left can give back to him; in
particular here, Right has to play only 1 move in order to give back 2 to Left.

\begin{theorem}
    \label{thm:weakening-game}
    If $\mathcal{A}$ is a monoid of games containing $\{\cdot\mid2\}$, then
    $\mathcal{A}$ is Left weak.
\end{theorem}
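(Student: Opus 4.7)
Take $X=N\{\cdot\mid 2\}$ for some $N\geq\birth(G)$; since $\mathcal{A}$ is a monoid containing $\{\cdot\mid 2\}$ we have $X\in\mathcal{A}$, and $X$ is a Left end. The plan is to show $\outcomeL(G+X)=\mathscr{R}$, which witnesses that $G$ is not Left $\mathcal{A}$-strong and so proves $\mathcal{A}$ is Left weak.

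Right's strategy in the game $G+X$ is the greedy one: on each of his turns, open a fresh $\{\cdot\mid 2\}$ (to $2$) if one remains; otherwise play any Right option in the current $G$-subposition; otherwise Right has no move and (being in mis\`ere) wins immediately. The point of the game $\{\cdot\mid 2\}$ is that it is a \emph{move inflator} for Left: Right spending one move to open it forces Left to subsequently expend two moves playing $2\to 1\to 0$. Hence $N$ button-openings shift the balance of moves by $+N$ in Left's favour, and for $N\geq\birth(G)$ this dominates any finite imbalance that play within $G$ itself can contribute.

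Quantitatively: since $X$ is a Left end, Left's first move is forced into $G$, and thereafter Left makes some $L_G\geq 1$ total moves in $G$ (bounded by $\birth(G)$) plus at most $2N$ moves among the $2$'s and $1$'s Right has created; Right plays exactly $N$ button-openings plus some $g\leq\birth(G)$ moves in $G$. If the game ends at a Right turn with Right having no legal move (which is what Right's strategy aims for), alternation forces $L_{\mathrm{total}}=N+g+1$, and combined with $L_{\mathrm{total}}\leq L_G+2N$ and $L_G\geq 1$ the required inequality reduces to $N\geq g$, which holds whenever $N\geq\birth(G)\geq g$.

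The main obstacle, and the delicate part of the argument, is confirming that Left cannot instead terminate the game at one of \emph{her} own turns (where, being mis\`ere, she would win). A direct bookkeeping — tracking that Left's end-move reserve at any stage equals twice the number of buttons Right has opened minus the end-moves Left has already used — shows that under the greedy strategy this reserve grows by $+2$ at each Right turn while falling by at most $1$ at each Left turn. Hence during Phase 1 (while buttons remain) the reserve climbs steadily, and by the time all buttons have been opened Left has a cushion of approximately $N$ unused end-moves; the choice $N\geq\birth(G)$ guarantees this cushion outlasts Phase 2 against every possible Left strategy, so Left always has an available move at each of her turns, and the game must therefore end at a Right turn.
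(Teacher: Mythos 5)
Your proof is correct and follows essentially the same route as the paper's: Right greedily opens the $\{\cdot\mid2\}$ components, and the move-count bookkeeping shows Left can never exhaust her available moves before Right does, so the game must end on a Right turn. The paper simply uses exactly $\birth(G)$ copies and phrases the endgame as reaching a position $G'+n$ with $n\geq\birth(G)+1$, which is the same quantitative argument as your reserve-tracking.
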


\begin{proof}
    Let $G$ be an augmented form that is not Left end-like. Now consider Left
    playing first on $G+\birth(G)\cdot\{\cdot\mid2\}$. We will show that Right
    wins.

    Left must play to some $G^L+\birth(G)\cdot\{\cdot\mid2\}$. Right can then
    respond with $G^L+2+(\birth(G)-1)\cdot\{\cdot\mid2\}$. Regardless of what
    Left does, Right will continue to play on the $\{\cdot\mid2\}$ components
    until they are exhausted. Clearly Left cannot run out of moves before Right
    accomplishes this task. At the moment that Right plays on the last
    $\{\cdot\mid2\}$ component, we have a game of the form $G'+n$, where $G'$
    is a subposition of $G^L$, and $n\geq\birth(G)+1$. Thus, Right will win.
\end{proof}

An almost identical proof clearly yields a countably infinite number of Left
weakening ends. For example, \emph{every} Left end with an option to 2 must be
Left weakening. And it needn't have 2 as an option either; it could instead
have an option to some $n>2$. But even this is not exhaustive, and the reader
may amuse themselves finding other more complex examples. We leave it as an
open problem to try and characterise such games.

\begin{problem}
    \label{prob:characterise-weakening}
    Find a characterisation of the weakening Left ends.
\end{problem}

For universes in particular, we have the following obvious corollary of
\cref{thm:weakening-game}, which states that a universe containing
$\{\cdot\mid2\}$ must be weak (and reduced).

\begin{corollary}
    \label{cor:universe-weak}
    If $\mathcal{U}$ is a universe containing $\{\cdot\mid2\}$, then
    $\mathcal{U}$ is weak (and hence also reduced).
\end{corollary}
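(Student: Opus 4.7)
The plan is to chain together two facts already in hand: the weakening theorem for monoids, and the fact that weak implies reduced. Since this is billed as an ``obvious corollary'' of \cref{thm:weakening-game}, the work really amounts to going from \emph{Left} weak (which \cref{thm:weakening-game} gives us directly) to \emph{weak}, and then invoking \cref{cor:weak-implies-red}.

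First I would note that every universe is a monoid of games: additive closure gives closure under the operation, and the empty game $0=\{\cdot\mid\cdot\}$ lies in every universe (it is dicotically generated from the empty conditions, or more simply it is the conjugate of itself and is obtained by hereditary closure from any starting element). Hence \cref{thm:weakening-game} applies and tells us immediately that $\mathcal{U}$ is Left weak.

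Next, I would upgrade Left weakness to full weakness via conjugate closure. Since $\{\cdot\mid2\}\in\mathcal{U}$ and $\mathcal{U}$ is closed under conjugation, the Right end $\overline{\{\cdot\mid2\}}\cong\{\overline{2}\mid\cdot\}$ also lies in $\mathcal{U}$. Running the proof of \cref{thm:weakening-game} with Left and Right swapped (or, equivalently, applying \cref{thm:weakening-game} after conjugating the whole situation, since $\overline{G}$ is Right $\mathcal{A}$-strong iff $G$ is Left $\overline{\mathcal{A}}$-strong) yields Right weakness of $\mathcal{U}$. By \cref{def:weak}, $\mathcal{U}$ is then weak.

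Finally, the parenthetical ``and hence also reduced'' is immediate from \cref{cor:weak-implies-red}. There is no real obstacle here; the only thing to be slightly careful about is invoking conjugate closure to bootstrap Left weakness to two-sided weakness, which is exactly the remark the paper makes just after the definition of weakening ends.
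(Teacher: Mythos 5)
Your proof is correct and follows essentially the same route as the paper's: apply \cref{thm:weakening-game} to obtain Left weakness, use conjugate closure to upgrade to Right weakness, and conclude with \cref{cor:weak-implies-red}. (One harmless slip: $0$ is not produced by the dicotic closure, whose defining sets must be non-empty, but your alternative justification via hereditary closure is fine.)
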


\begin{proof}
    It follows from \cref{thm:weakening-game} that $\mathcal{U}$ is Left weak.
    Now, every universe is conjugate closed, and hence $\mathcal{U}$ must also
    be Right weak. That is, $\mathcal{U}$ is weak, and hence also reduced
    (\cref{cor:weak-implies-red}).
\end{proof}

It then follows swiftly that $\mathcal{D}(\{\cdot\mid2\})$ must be a universe
that is minimal with respect to being reduced, since its only subuniverses are
$\mathcal{D}$ and $\mathcal{D}(\overline{1})$, neither of which is reduced. We
can also give a very simple construction for uncountably many weak universes
here (and hence also uncountably many reduced universes). Take any subset
$\mathcal{S}$ of $\mathbb{N}_{\geq3}$, and then construct the universal closure
of the Left ends $\{\{\cdot\mid2,n\}:n\in\mathcal{S}\}$. This defines an
injective map from the subsets of $\mathbb{N}_{\geq3}$ to the set of universes,
and hence there must be uncountably many weak universes.

Going to back to monoids of games (that aren't necessarily universes), we have
another corollary of \cref{thm:weakening-game}.
\begin{corollary}
    \label{cor:left-end-invert}
    If $\mathcal{A}$ is a monoid of games containing $\{\cdot\mid2\}$, then
    every $\mathcal{A}$-invertible game is a Left end. In particular,
    $\mathcal{A}^{\overline{\times}}$ is the trivial group.
\end{corollary}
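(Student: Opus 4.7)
The plan is to derive both assertions from \cref{thm:weakening-game} and \cref{prop:weak-implies-red}, which together have essentially done all the work. Since $\mathcal{A}$ is a monoid containing $\{\cdot\mid2\}$, \cref{thm:weakening-game} gives that $\mathcal{A}$ is Left weak. Then \cref{prop:weak-implies-red}, applied to the Left weak monoid $\mathcal{A}$, immediately yields the first assertion: every element of the invertible subgroup of $\mathcal{A}$ is a Left end.

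For the ``in particular'' clause, I would take any $G\in\mathcal{A}^{\overline{\times}}$ and show $G\cong0$. By \cref{def:conj-invertible}, we have $\overline{G}\in\mathcal{A}$ with $G+\overline{G}\equiv_\mathcal{A}0$. This certifies that \emph{both} $G$ and $\overline{G}$ sit in the invertible subgroup of $\mathcal{A}$. Applying the first assertion to each of them, $G$ is a Left end and $\overline{G}$ is a Left end; but $\overline{G}$ being a Left end is precisely the statement that $G$ is a Right end. A form that is simultaneously a Left end and a Right end must be isomorphic to $0$, so $\mathcal{A}^{\overline{\times}}$ is the trivial group.

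There is no genuine obstacle here; the proof is simply a two-line combination of previously established results. If anything, the only point requiring a moment of care is remembering that membership in $\mathcal{A}^{\overline{\times}}$ forces $\overline{G}\in\mathcal{A}$ (so that the first assertion applies to $\overline{G}$ as well), rather than needing to assume conjugate closure of the ambient monoid.
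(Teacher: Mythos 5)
Your proposal is correct and follows essentially the same route as the paper, which simply cites \cref{thm:weakening-game} and \cref{prop:weak-implies-red}; you have merely unwound the ``in particular'' clause explicitly. Your closing remark is also well taken: the triviality of $\mathcal{A}^{\overline{\times}}$ really does rest on \cref{def:invertible} forcing $\overline{G}\in\mathcal{A}$, not on any conjugate closure of $\mathcal{A}$, which matters since monoids such as $\mathcal{J}$ in \cref{sec:weak} are not conjugate closed.
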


\begin{proof}
    This follows immediately from \cref{prop:weak-implies-red}.
\end{proof}

Note that \cref{cor:left-end-invert} does not mean that the invertible subgroup
of $\mathcal{A}$ is trivial---although, the conjugate invertible subgroup of
$\mathcal{A}$ certainly is. Indeed, in Milley's monoid of \textsc{partizan
kayles} $\mathcal{K}$, it is the case that
$\overline{1}+W_2\equiv_\mathcal{K}0$ \cite[Corollary 4 on
p.~11]{milley:partizan}, and it is obvious that
$\mathcal{K}^{\overline{\times}}$ is the trivial group. And notice that both
$\overline{1}$ and $W_2$ are Left (dead) ends. But, if we want to give an
example of a Left weak monoid that has a non-trivial invertible subgroup, then
$\mathcal{K}$ will not do it for us, since it is not Left weak: recall that
$\mathcal{K}\subseteq\mathcal{E}$, and also that
$1+\overline{1}\equiv_\mathcal{E}0$; in particular, $1+\overline{1}$ is Left
$\mathcal{E}$-strong, and hence also Left $\mathcal{K}$-strong; but
$1+\overline{1}$ is not Left end-like, so $\mathcal{K}$ cannot be Left weak. We
will now give a true example, demonstrating further how strange mis\`ere
monoids can be.

Let $\mathcal{J}=\left\langle \overline{1}, \{\cdot\mid2\}\right\rangle$; that
is, the semigroup generated by the (additive) closure of $\overline{1}$ and
$\{\cdot\mid2\}$ (we will show soon that it is a monoid). Observe that
\[
    \outcome(\overline{n}+m\cdot\{\cdot\mid2\})=\begin{cases}
        \mathscr{L} & n>m\\
        \mathscr{N} & n\leq m.
    \end{cases}
\]
It then clearly follows that
$\overline{n}+m\cdot\{\cdot\mid2\}\equiv_\mathcal{J}0$ if and only if $n=m$.
Hence $\mathcal{J}$ must be a monoid: $\overline{1}+\{\cdot\mid2\}$ is the
identity. Since $\{\cdot\mid2\}\in\mathcal{J}$, we know that $\mathcal{J}$ is
Left weak by \cref{thm:weakening-game}. Now we have
\begin{align*}
    \mathcal{J}&\cong\left\langle x,y\mid xy\right\rangle\\
               &\cong\mathbb{Z}.
\end{align*}
So, our Left weak monoid $\mathcal{J}$ is not only \emph{not} reduced, but it
is a group! We will stress again that such a thing cannot occur in a universe
(or indeed any mis\`ere monoid with the conjugate property); if a universe is
weak, then it is reduced.

Regarding Milley's monoid of \textsc{partizan kayles} $\mathcal{K}$, they write
in \cite[p.~13]{milley:partizan} that it is the only known partizan example
where a game may have an inverse that is not its conjugate. We have
demonstrated, with our strange monoid $\mathcal{J}$, another example. Indeed,
using the ideas we have discussed, it is not difficult to find many more. But
we make a remark on the invertibility here. Milley writes
$\overline{1}+W_2\equiv_\mathcal{K}0$, and observes that
$1\not\equiv_\mathcal{K}W_2$. Now, even though $\overline{1}$ and $W_2$ are not
conjugate $\mathcal{K}$-invertible, they do satisfy the following relations:
\begin{align*}
    1+\overline{1}&\equiv_\mathcal{K}0,\\
    W_2+\overline{W_2}&\equiv_\mathcal{K}0.
\end{align*}
But, in our monoid $\mathcal{J}$, we have a stronger behaviour:
\begin{align*}
    1+\overline{1}&\not\equiv_\mathcal{J}0,\\
    \{\cdot\mid2\}+\overline{\{\cdot\mid2\}}&\not\equiv_\mathcal{J}0.
\end{align*}
This is because $\mathcal{J}$ is Left weak, and hence only Left ends might be
equal to 0 ($\mathcal{K}$ is \emph{not} Left weak). It would be useful to
collate a number of examples and their behaviours so that we may get a better
lay of the land with regards to the strangeness of partizan mis\`ere monoids.

\section{Final remarks}
\label{sec:final}

In his brilliant paper, Siegel laments the possible lack of utility of his
simplest forms \cite[\S7 on pp.~34--35]{siegel:on}. Indeed, in the subsection
``Is any of this useful for anything?'', he gives some computational evidence
for the small amounts of information we are able to discard from a game tree,
and he writes that the theory of simplest forms is ``unlikely to be applicable
to specific case studies in a way that provides much real insight.'' We have
seen in this paper that the simplest forms were instrumental in coordinating
our proofs. We now know far more about invertibility in universes, which will
indeed have practical benefit to analysing specific rulesets. As such, we push
back: both in theory and in practice, Siegel's simplest forms are wonderful.

It is curious how the development of the simplest forms led so swiftly to our
results on the conjugate property and the characterisation of the invertible
elements. It would be interesting to construct an alternative proof that makes
no use of these forms. This has, of course, been tried before, but the
challenge of overcoming end-reversibilty is a great one. It is left as a
challenge to the reader.

\begin{problem}
    Can we find a proof of each universe having the conjugate property without
    using simplest forms? And the characterisation of the invertible elements?
\end{problem}

Given that we do now have a characterisation of the invertible elements of each
universe, an almost entirely open area that is ripe for future work is
understanding what their group structure is (the invertible elements of a
monoid always form a group). These groups must be countable and abelian, of
course, but what else can we say? It might be wise to look at the subgroups
generated by those invertible games born by days 2 and 3, and see what patterns
emerge.

\begin{problem}
    What can we say about the group structure of the invertible subgroup of
    each universe?
\end{problem}

We have scattered a number of other open problems throughout this paper. For
the reader's convenience, we list them here:
\begin{center}
    \begin{tabular}{c}
        \cref{prob:inverses} on \cpageref{prob:inverses}\\
        \cref{prob:rank1} on \cpageref{prob:rank1}\\
        \cref{prob:reduced-weak} on \cpageref{prob:reduced-weak}\\
        \cref{prob:weak-weakening} on \cpageref{prob:weak-weakening}\\
        \cref{prob:characterise-weakening} on
        \cpageref{prob:characterise-weakening}
    \end{tabular}
\end{center}
Many seem tantalising. The likeliest to bear fruit quickly is
\cref{prob:characterise-weakening}: characterising the weakening Left ends. We
hope to see many advancements soon!

\bibliographystyle{plainurl}
\bibliography{bib}

\appendix

\section{}

\subsection{Disintegrators}
\label{app:disintegrator}

\begin{lemma}
    \label{lem:disintegrator}
    If $X$ is a disintegrator, then $\outcomeR(\overline{1}+X)=\mathscr{R}$.
\end{lemma}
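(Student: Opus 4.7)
The plan is to induct on the formal birthday of $X$, exhibiting a winning strategy for Right moving first on $\overline{1} + X$. The observation that drives everything is that $\overline{1} \cong \{\cdot \mid 0\}$ has no Left options, so Left can never make a move on the $\overline{1}$ summand; every Left move must be on the $X$ summand, which is precisely what allows the recursive disintegrator structure to propagate under a partial-sum strategy.

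Right opens by using the distinguished Right option guaranteed by \cref{def:disintegrator}: there exists some $X^R$ that is not a Left end and for which every Left option $X^{RL}$ either satisfies $\outcomeL(X^{RL}) = \mathscr{R}$ or is itself a disintegrator. Right plays $X \to X^R$, leaving $\overline{1} + X^R$ with Left to move. Since $X^R$ is not a Left end and $\overline{1}$ has no Left options, Left is forced into some $\overline{1} + X^{RL}$ with Right to move.

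Right then responds according to the dichotomy. In the branch where $\outcomeL(X^{RL}) = \mathscr{R}$, Right collapses the tail by playing $\overline{1} \to 0$, leaving $X^{RL}$ with Left to move, which Right wins by hypothesis on the outcome. In the branch where $X^{RL}$ is itself a disintegrator, the induction hypothesis applies (as $\birth(X^{RL}) < \birth(X)$) and yields $\outcomeR(\overline{1} + X^{RL}) = \mathscr{R}$, so Right continues recursively. I do not expect any real obstacle: the argument is essentially a formal transcription of the recursive definition, and the base case — a disintegrator of minimal formal birthday — is automatically forced into the first branch of the dichotomy, which closes the induction.
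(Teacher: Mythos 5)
Your proof is correct and follows essentially the same strategy as the paper's: Right moves to the distinguished $X^R$, Left is forced onto the $X$ component, and Right either cashes in $\overline{1}\to 0$ or recurses on the disintegrator $X^{RL}$. The only difference is that you make the induction parameter (formal birthday) and the base case explicit, which the paper leaves implicit.
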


\begin{proof}
    Since $X$ is a disintegrator, there exists some $X^R$ that is not a Left
    end such that, for all Left options $X^{RL}$, either
    \begin{enumerate}
        \item $\outcomeL(X^{RL})=\mathscr{R}$; or
        \item $X^{RL}$ is a disintegrator.
    \end{enumerate}
    Right should play to this $X^R$, leaving $\overline{1}+X^R$. Left must then
    respond to some $\overline{1}+X^{RL}$ (recall that $X^R$ is not a Left
    end). If $\outcome(X^{RL})=\mathscr{R}$, then Right can play on
    $\overline{1}$ to 0, leaving himself with a winning position. Instead, if
    $X^{RL}$ is a disintegrator, then Right wins by induction.
\end{proof}

\begin{lemma}
    \label{lem:not-disintegrator}
    If $X$ satisfies $\outcomeL(X)=\mathscr{L}$ and is not a disintegrator,
    then $\outcomeR(\overline{1}+X)=\mathscr{L}$.
\end{lemma}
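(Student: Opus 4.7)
The plan is to induct on the formal birthday of $X$ and exhaustively analyse Right's possible first moves on $\overline{1}+X$. Right has only two kinds of options: collapse $\overline{1}$ to $0$, or play within $X$ to some $X^R$.

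If Right plays $\overline{1}\to 0$, we are left with $X$ and Left to move, and by hypothesis $\outcomeL(X)=\mathscr{L}$, so Left wins. The substantive case is therefore when Right plays to some $\overline{1}+X^R$. Here $\overline{1}$ is a Left end, so Left's only available moves are within $X^R$. To find a winning response, I would unpack the hypothesis that $X$ is not a disintegrator by directly negating \cref{def:disintegrator}: for every Right option $X^R$ of $X$, either (i) $X^R$ is a Left end, or (ii) there exists a Left option $X^{RL}$ with $\outcomeL(X^{RL})\neq\mathscr{R}$ (equivalently $\outcomeL(X^{RL})=\mathscr{L}$) such that $X^{RL}$ is itself not a disintegrator.

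In case (i), the position $\overline{1}+X^R$ is a sum of two Left ends, hence itself a Left end; Left has no move, and in mis\`ere play the player with no move wins, so Left wins. In case (ii), Left responds by moving $X^R\to X^{RL}$, leaving $\overline{1}+X^{RL}$ with Right to move. Since $X^{RL}$ has strictly smaller formal birthday than $X$ and satisfies both $\outcomeL(X^{RL})=\mathscr{L}$ and the non-disintegrator property, the induction hypothesis gives $\outcomeR(\overline{1}+X^{RL})=\mathscr{L}$, so Left wins.

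The only delicate step is correctly unpacking the (nested, disjunctive) negation of the disintegrator definition; everything else is a mechanical case split that mirrors \cref{lem:disintegrator} with the roles of Left and Right reversed. I do not expect any real obstacle beyond bookkeeping, since the definition of a disintegrator was tailored precisely so that the inductive structure closes in exactly this way.
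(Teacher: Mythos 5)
Your proposal is correct and follows essentially the same route as the paper's proof: rule out Right's move on $\overline{1}$ using $\outcomeL(X)=\mathscr{L}$, then negate \cref{def:disintegrator} to split Right's moves on $X$ into the Left-end case (Left wins with no moves) and the case where Left can answer to an $X^{RL}$ satisfying the inductive hypothesis. The unpacking of the negation, including the observation that $\outcomeL(X^{RL})\neq\mathscr{R}$ means $\outcomeL(X^{RL})=\mathscr{L}$, matches what the paper does implicitly.
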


\begin{proof}
    Right cannot play on $\overline{1}$ to 0, leaving $X$, since this is
    winning for Left. So, Right must play on $X$ to some $X^R$, leaving
    $\overline{1}+X^R$. Since $X$ is not a disintegrator, we must be in one of
    two cases:
    \begin{enumerate}
        \item $X^R$ is a Left end.

            In this case, Left wins immediately since she has no moves.
        \item There exists a Left option $X^{RL}$ such that
            $\outcomeL(X^{RL})=\mathscr{L}$ and $X^{RL}$ is not a
            disintegrator.

            In this case, Left can play to this $X^{RL}$, which must be winning
            by induction.
    \end{enumerate}
\end{proof}

\begin{namedthm*}{\cref{prop:disintegrator}}
    If $\mathcal{U}$ is a universe and $1\in\hat{\mathcal{U}}$, then 1 is
    $\mathcal{U}$-invertible if and only if $\mathcal{U}$ contains no Left end
    that is a disintegrator.
\end{namedthm*}

\begin{proof}
    By \cref{cor:birth-1-invert}, we know that $1$ is $\mathcal{U}$-invertible
    if and only if $1+\overline{1}$ is Left $\mathcal{U}$-strong, which means
    that $\outcomeL(1+\overline{1}+X)=\mathscr{L}$ for all Left ends
    $X\in\mathcal{U}$. Since $X$ is a Left end, this is equivalent to saying
    that $\outcomeR(\overline{1}+X)=\mathscr{L}$ for all Left ends
    $X\in\mathcal{U}$.

    The result then follows immediately from
    \cref{lem:disintegrator,lem:not-disintegrator}.
\end{proof}

\subsection{Starkillers}
\label{app:starkillers}

\begin{lemma}
    \label{lem:starkiller}
    If $X$ is a starkiller, then $\outcomeR(*+X)=\mathscr{R}$.
\end{lemma}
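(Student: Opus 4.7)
The plan is to mirror the proof of Lemma \ref{lem:disintegrator} very closely, exhibiting an explicit winning strategy for Right moving first on $*+X$. Since $X$ is a starkiller, I would fix a witness Right option $X^R$ of $X$ with $\outcomeR(X^R) = \mathscr{R}$ such that, for every Left option $X^{RL}$ of $X^R$, either $\outcomeL(X^{RL}) = \mathscr{R}$ or $X^{RL}$ is itself a starkiller. Right's first move is to $*+X^R$; the proof then becomes a case analysis on Left's reply, run by induction on the formal birthday of $X$ (so that the inductive hypothesis is available for any subposition $X^{RL}$).

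Unlike the disintegrator case, where $\overline{1}$ is a Left end and Left is forced onto the right-hand summand, here Left has a genuine choice. If Left plays $*$ down to $0$, the position is $X^R$ with Right to move; this is immediately won for Right from $\outcomeR(X^R) = \mathscr{R}$, and notably this subcase is also the one that covers $X^R$ happening to be a Left end (then it is Left's only reply). Otherwise, Left plays on $X^R$ to some $*+X^{RL}$ with Right to move, and I split on the starkiller dichotomy for $X^{RL}$. If $\outcomeL(X^{RL}) = \mathscr{R}$, Right collapses $*$ to $0$, leaving $X^{RL}$ Left-to-move, which Right wins by hypothesis. If $X^{RL}$ is a starkiller, the inductive hypothesis yields $\outcomeR(*+X^{RL}) = \mathscr{R}$ and Right wins directly.

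I do not anticipate any substantive obstacle: the definition of starkiller is engineered precisely so that this case analysis closes. The only genuine departure from the disintegrator proof is the extra Left reply of playing $*$ to $0$, and the strengthened clause $\outcomeR(X^R) = \mathscr{R}$ in \cref{def:starkiller} (in place of merely requiring $X^R$ be a non-Left-end) is exactly the condition needed to absorb it. The structure should read almost verbatim like \cref{lem:disintegrator}, with the single additional bullet for Left's $*$-collapse.
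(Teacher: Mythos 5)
Your proposal is correct and matches the paper's proof essentially verbatim: Right moves to $*+X^R$ for the witness option, and the same three-way case analysis on Left's reply (collapsing $*$, moving to an $X^{RL}$ with $\outcomeL(X^{RL})=\mathscr{R}$, or moving to a starkiller $X^{RL}$) closes by induction exactly as you describe.
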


\begin{proof}
    By \cref{def:starkiller}, we know that there exists some Right option $X^R$
    such that $\outcomeR(X^R)=\mathscr{R}$ and, for all Left options $X^{RL}$,
    either
    \begin{enumerate}
        \item
            $\outcomeL(X^{RL})=\mathscr{R}$; or
        \item
            $X^{RL}$ is a starkiller.
    \end{enumerate}
    So, let us consider Right moving to $*+X^R$. If Left plays on $*$ to leave
    simply $X^R$, then Right wins since $\outcome(X^R)=\mathscr{R}$. Otherwise,
    Left moves to some $*+X^{RL}$. If $\outcomeL(X^{RL})=\mathscr{R}$, then
    Right can play on $*$ to leave $X^{RL}$, which is thus winning. Otherwise,
    $X^{RL}$ must be a starkiller, and so Right wins by induction.
\end{proof}

\begin{lemma}
    \label{lem:not-starkiller}
    If $X$ satisfies $\outcomeL(X)=\mathscr{L}$ and is not a starkiller, then
    $\outcomeR(*+X)=\mathscr{L}$.
\end{lemma}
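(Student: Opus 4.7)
The plan is to mirror the structure of the proof of \cref{lem:not-disintegrator} (and the starkiller counterpart \cref{lem:starkiller}), arguing directly from the definitions by a case analysis on Right's first move in $*+X$. I will induct on the formal birthday of $X$ (equivalently, on the subposition order), and in each subcase exhibit an explicit Left response that is winning, either by the hypothesis or by the inductive hypothesis.

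First I would observe that Right has exactly two kinds of first moves on $*+X$: either Right collapses the $*$, leaving $X$ with Left to move, or Right plays within $X$ to some Right option $X^R$, leaving $*+X^R$. In the first case, the hypothesis $\outcomeL(X)=\mathscr{L}$ immediately gives Left the win. In the second case, I use the fact that $X$ is not a starkiller: unpacking \cref{def:starkiller}, this means that for every Right option $X^R$, either $\outcomeR(X^R) = \mathscr{L}$, or there exists a Left option $X^{RL}$ with $\outcomeL(X^{RL})=\mathscr{L}$ such that $X^{RL}$ is itself not a starkiller.

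Now I would describe Left's winning response to $*+X^R$. If $\outcomeR(X^R)=\mathscr{L}$, Left plays $*$ to $0$, leaving $X^R$ with Right to move, which Left wins by hypothesis on $X^R$. Otherwise, fix an $X^{RL}$ furnished by the non-starkiller property; Left moves $*+X^R$ to $*+X^{RL}$. Since $X^{RL}$ satisfies $\outcomeL(X^{RL})=\mathscr{L}$ and is not a starkiller, the inductive hypothesis gives $\outcomeR(*+X^{RL})=\mathscr{L}$, which is exactly what Left needs after handing the move back to Right.

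The main obstacle is really only the careful bookkeeping of quantifiers when negating the definition of starkiller, so that the induction descends to a strictly smaller position $X^{RL}$ which is \emph{simultaneously} Left-winning going first and not a starkiller; both properties are exactly what the definition provides in the second clause, so the induction closes cleanly. No additional machinery (comparison test, simplest forms, the pomonoid structure, etc.) is required.
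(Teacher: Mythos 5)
Your proposal is correct and follows essentially the same argument as the paper's proof: rule out Right collapsing $*$ via the hypothesis $\outcomeL(X)=\mathscr{L}$, then negate \cref{def:starkiller} to handle each $*+X^R$ either by Left collapsing $*$ (when $\outcomeR(X^R)=\mathscr{L}$) or by descending to a non-starkiller $X^{RL}$ with $\outcomeL(X^{RL})=\mathscr{L}$ and invoking induction.
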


\begin{proof}
    Right cannot play on $*$ to 0, leaving $X$, otherwise Left would win
    immeidately. Thus, Right must play on $X$ to leave some $*+X^R$. Since $X$
    is \emph{not} a starkiller, it follows (straight from
    \cref{def:starkiller}) that either $\outcomeR(X^R)=\mathscr{L}$, or else
    there exists some $X^{RL}$ such that $\outcomeL(X^{RL})=\mathscr{L}$ and
    $X^{RL}$ is not a starkiller.

    If we are in the first case, with $\outcomeR(X^R)=\mathscr{L}$, then Left
    can play on $*$ to leave $X^R$, which must be winning. In the second case,
    Left should move on $X^R$ to the $X^{RL}$ with properties just discussed,
    leaving $*+X^{RL}$, which must be winning by induction.
\end{proof}

\begin{namedthm*}{\cref{prop:starkiller}}
    If $\mathcal{U}$ is a universe, then $*$ is $\mathcal{U}$-invertible if and
    only if $\mathcal{U}$ contains no Left end that is a starkiller.
\end{namedthm*}

\begin{proof}
    By \cref{cor:birth-1-invert}, we know that $*$ is $\mathcal{U}$-invertible
    if and only if $*+*$ is Left $\mathcal{U}$-strong, which means that
    $\outcomeL(*+*+X)=\mathscr{L}$ for all Left ends $X\in\mathcal{U}$. Since
    $X$ is a Left end, this is equivalent to saying that
    $\outcomeR(*+X)=\mathscr{L}$ for all Left ends $X\in\mathcal{U}$.

    The result then follows immediately from
    \cref{lem:starkiller,lem:not-starkiller}.
\end{proof}

\subsection{Super starkillers}
\label{app:super-starkillers}

\begin{lemma}
    \label{lem:super-starkiller}
    If $X$ is a super starkiller, then
    $\outcomeR(\{\tomb,0\mid0\}+X)=\mathscr{R}$.
\end{lemma}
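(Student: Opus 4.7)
The plan is to carry out an induction on the formal birthday of $X$, parallel to the proofs of \cref{lem:disintegrator,lem:starkiller}. Write $G = \{\tomb, 0 \mid 0\}$ for brevity. The base case is vacuous, since \cref{def:super-starkiller} requires a Right option that is not a Left end, and hence no game of birthday~$0$ can be a super starkiller. For the inductive step, let $X$ be a super starkiller and pick a witnessing Right option $X^R$: it is not a Left end, satisfies $\outcomeR(X^R) = \mathscr{R}$, and every Left option $X^{RL}$ of $X^R$ either satisfies $\outcomeL(X^{RL}) = \mathscr{R}$ or is itself a super starkiller.

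Right's strategy on $G + X$ is to play on the $X$ component, moving to $G + X^R$. The ordinary Left options of $G$ are just $\{0\}$ (the Left tombstone is not a real option), and $X^R$ has at least one Left option since it is not a Left end; so Left's responses on $G + X^R$ come in two shapes: (i) $0 + X^R = X^R$, obtained by playing on the $G$ component; or (ii) $G + X^{RL}$ for some Left option $X^{RL}$ of $X^R$. In case~(i) Right now moves first on $X^R$ and wins, because $\outcomeR(X^R) = \mathscr{R}$. In case~(ii) I split according to the super starkiller dichotomy: if $\outcomeL(X^{RL}) = \mathscr{R}$, then Right plays on $G$ to $0$, leaving $X^{RL}$ with Left to move, and Left loses since $\outcomeL(X^{RL}) = \mathscr{R}$; if instead $X^{RL}$ is a super starkiller, then $\birth(X^{RL}) < \birth(X)$, so by the induction hypothesis $\outcomeR(G + X^{RL}) = \mathscr{R}$, and it is Right's turn again, so Right wins.

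The argument is routine — a verbatim translation of the starkiller proof, with $*$ replaced by $G$ — and I do not expect any real obstacle. The only bookkeeping point worth flagging is that the Left tombstone decorating $G$ contributes no ordinary move, so the option analysis really is exhausted by the two cases above; the ``not a Left end'' clause in \cref{def:super-starkiller} then ensures that the recursion in case~(ii) is genuinely available whenever the super starkiller hypothesis is invoked.
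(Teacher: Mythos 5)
Your argument is correct and is essentially the paper's own proof: Right moves to the witnessing $X^R$, Left's replies are split into the move to $X^R$ (lost since $\outcomeR(X^R)=\mathscr{R}$) and the moves to $X^{RL}$ (handled by the dichotomy and induction). The one justification to tighten is the tombstone: the reason the Left tombstone of $\{\tomb,0\mid0\}$ does not hand Left an immediate first-player win on $\{\tomb,0\mid0\}+X^R$ is not that it ``is not a real option'' --- a Left tombstone \emph{does} win for Left moving first when the rest of the sum is a Left end --- but rather that $X^R$ is not a Left end, so the sum is not Left end-like; this, not the non-vacuity of case~(ii), is the essential use of the ``not a Left end'' clause, and the paper's proof states it explicitly.
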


\begin{proof}
    Since $X$ is a super starkiller, Right can play on $X$ to some $X^R$ that
    is not a Left end such that $\outcomeR(X^R)=\mathscr{R}$ and, for all Left
    options $X^{RL}$ of $X^R$,
    either
    \begin{enumerate}
        \item
            $\outcomeL(X^{RL})=\mathscr{R}$, or
        \item
            $X^{RL}$ is a super starkiller.
    \end{enumerate}
    Since $X$ is not a Left end, the tombstone of $\{\tomb,0\mid0\}$ does not
    yield that Left wins going first here. If Left plays on $\{\tomb,0\mid0\}$
    to 0, leaving $X^R$, then Right wins since $\outcomeR(X^R)=\mathscr{R}$.
    So, Left must instead play on $X^R$ to some $X^{RL}$. If
    $\outcomeL(X^{RL})=\mathscr{R}$, then Right can play on $\{\tomb,0\mid0\}$
    to 0 and win the game. Otherwise, if $X^{RL}$ is a super starkiller, then
    Right wins by induction.
\end{proof}

\begin{lemma}
    \label{lem:not-super-starkiller}
    If $X$ satisfies $\outcomeL(X)=\mathscr{L}$ is not a super starkiller, then
    $\outcomeR(\{\tomb,0\mid0\}+X)=\mathscr{L}$.
\end{lemma}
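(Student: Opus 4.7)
My plan is to mirror the proof of \cref{lem:not-starkiller}, the analogous ``not a starkiller'' lemma, while handling a new subcase that arises from the super starkiller definition's extra ``not a Left end'' restriction on $G^R$. I would induct on $\birth(X)$.

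First, I would enumerate Right's two types of ordinary moves from $\{\tomb,0\mid0\}+X$. If Right plays $\{\tomb,0\mid0\}\to 0$, the resulting position $X$ with Left to move is won by Left because $\outcomeL(X)=\mathscr{L}$ by hypothesis. Otherwise Right plays $X\to X^R$ for some $X^R$, leaving $\{\tomb,0\mid0\}+X^R$ with Left to move, and I must supply Left with a winning response for every such $X^R$. Unpacking the negation of \cref{def:super-starkiller}, for each such $X^R$ at least one of the following holds: $X^R$ is a Left end; $\outcomeR(X^R)=\mathscr{L}$; or there exists a Left option $X^{RL}$ of $X^R$ with $\outcomeL(X^{RL})=\mathscr{L}$ and $X^{RL}$ not a super starkiller. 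The second and third cases proceed exactly as in \cref{lem:not-starkiller}: in the second, Left plays $\{\tomb,0\mid0\}\to 0$, leaving $X^R$ for Right, who loses; in the third, Left plays $X^R\to X^{RL}$, leaving $\{\tomb,0\mid0\}+X^{RL}$, and the inductive hypothesis yields $\outcomeR(\{\tomb,0\mid0\}+X^{RL})=\mathscr{L}$.

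The main obstacle is the new first subcase, $X^R$ a Left end, which has no counterpart in \cref{lem:not-starkiller} because the starkiller definition imposes no ``not a Left end'' restriction. Here the key observation is that $\{\tomb,0\mid0\}$ carries a Left tombstone, flagging it as Left $\mathcal{U}$-strong in any universe where it has an expansion; adding the Left end $X^R$ cannot destroy this tombstone, so the sum $\{\tomb,0\mid0\}+X^R$ is still Left end-like and inherits the Left $\mathcal{U}$-strong flag. I would argue---either by passing to an expansion $\{E,0\mid 0\}+X^R$ in a suitable universe and exploiting that Left $\mathcal{U}$-strong forms summed with a Left end have $\outcomeL=\mathscr{L}$, or by appealing directly to Siegel's outcome conventions on $\maug$ so that the tombstone functions as a winning flag for Left---that $\outcomeL(\{\tomb,0\mid0\}+X^R)=\mathscr{L}$, which furnishes Left's winning response and closes the last case.
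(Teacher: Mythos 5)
Your proposal is correct and follows essentially the same route as the paper: rule out Right's move on $\{\tomb,0\mid0\}$ using $\outcomeL(X)=\mathscr{L}$, then split Right's moves $X\to X^R$ into the three cases given by negating \cref{def:super-starkiller}, handling the Left-end case via the Left tombstone (Siegel's outcome convention on Left end-like sums), the $\outcomeR(X^R)=\mathscr{L}$ case by Left playing $\{\tomb,0\mid0\}\to 0$, and the last case by induction. The paper likewise invokes the tombstone directly rather than passing to an expansion, so your second suggested justification for the Left-end subcase is the one it uses.
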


\begin{proof}
    Right cannot play on $\{\tomb,0\mid0\}$ to 0, otherwise Left would win. So,
    Right must play to some $\{\tomb,0\mid0\}+X^R$. Since $X$ is not a super
    starkiller, we must be in one of the following cases:
    \begin{enumerate}
        \item $X^R$ is a Left end.

            In this case, since $\{\tomb,0\mid0\}$ is Left $\mathcal{U}$-strong
            (it has a Left tombstone), we observe that Left wins going first.
        \item $\outcomeR(X^R)=\mathscr{L}$.

            In this case, Left can play on $\{\tomb,0\mid0\}$ to 0, leaving
            herself a winning position.
        \item there exists a Left option $X^{RL}$ such that
            $\outcomeL(X^{RL})=\mathscr{L}$ and $X^{RL}$ is not a super
            starkiller.

            In this case, Left can play to this $X^{RL}$, which must be winning
            by induction.
    \end{enumerate}
\end{proof}

\begin{namedthm*}{\cref{prop:super-starkiller}}
    If $\mathcal{U}$ is a universe and $\{\tomb,0\mid0\}\in\hat{\mathcal{U}}$,
    then $\{\tomb,0\mid0\}$ is $\mathcal{U}$-invertible if and only if
    $\mathcal{U}$ contains no Left end that is a super starkiller.
\end{namedthm*}

\begin{proof}
    By \cref{cor:birth-1-invert}, we know that $\{\tomb,0\mid0\}$ is
    $\mathcal{U}$-invertible if and only if $\{\tomb,0\mid0\}+\{0\mid0,\tomb\}$
    is Left $\mathcal{U}$-strong. We calculate (or recall from
    \cref{subsec:super-starkilers}) that
    \[
        \{\tomb,0\mid0\}+\{0\mid0,\tomb\}\equiv_\mathcal{M}\{\{\tomb,0\mid0\}\mid\{0\mid0,\tomb\}\}.
    \]
    So, $\{\tomb,0\mid0\}$ is $\mathcal{U}$-invertible if and only if
    $\outcomeR(\{\tomb,0\mid0\}+X)=\mathscr{L}$ for all Left ends
    $X\in\mathcal{U}$.

    The result then follows immediately from
    \cref{lem:super-starkiller,lem:not-super-starkiller}.
\end{proof}

\end{document}